\newtheorem{theorem}{Theorem}[section]
\newtheorem{lemma}[theorem]{Lemma}
\newtheorem{proposition}[theorem]{Proposition}
\newtheorem{corollary}[theorem]{Corollary}
\newtheorem{remark}[theorem]{Remark}
\newtheorem{assumption}{Assumption}
\newtheorem{example}{Example}
\def\proj{\mathbf{\Pi}}
\def\embd{\pi} 
\def\embdcomb{\pi} 
\def\tdlim{N/L\to \rho}
\def\weakconv{\stackrel{d}{\to}}
\newcommand{\NN}{\mathbf{N}}
\newcommand{\mG}{\mathcal{G}}
\newcommand{\mL}{\mathcal{L}}
\newcommand{\mO}{\mathcal{O}}
\newcommand{\mM}{\mathcal{M}}
\newcommand{\mT}{\mathcal{T}}
\newcommand{\mD}{\mathcal{D}}
\newcommand{\mR}{\mathcal{R}}
\newcommand{\mA}{\mathcal{A}}
\newcommand{\mZ}{\mathcal{Z}}
\newcommand{\mK}{\mathcal{K}}
\newcommand*{\ud}{\mathrm{\,d}}
\newcommand{\supp}{\operatorname{supp} } 
\title{Modulated Poisson--Dirichlet diffusions arising from inclusion processes
with a slow phase}
\author{Simon Gabriel\footnote{Email:\href{mailto:simon.gabriel@uni-muenster.de}{simon.gabriel@uni-muenster.de}, University of M\"unster, DE}
}
\date{}
\begin{document}

\maketitle


\begin{abstract}
We study mean--field inclusion processes with an
additional slow phase, in which particle interactions occur at a vanishing rate
proportional to the inverse system size. 
In the thermodynamic limit, such systems exhibit condensation
at high particle density, forming clusters of diverging size.
Our main result provides convergence in law of inclusion processes to a novel two-component
infinite-dimensional stochastic diffusion, describing the co-evolution of the
solid condensed and microscopic fluid phase.
 In particular, we establish non--trivial mass
exchange between the two phases. 

The resulting scaling limit
extends the Poisson-Dirichlet diffusion
(Ethier and Kurtz, 1981),
introducing an additional control process that modulates its parameters. 
Our result builds on classical estimates of generator differences, which
in this setting yield non-vanishing deterministic error bounds.
We provide the missing probabilistic ingredient by showing
instantaneous condensation, with particle clusters concentrating  on a
vanishing volume fraction immediately.  
We further establish the well-posedness of the limiting dynamics generally
 as Feller processes on a compact state space.
\end{abstract}



\noindent\hspace{.1cm}{\small\textit{{Keywords.}} Poisson--Dirichlet, Inclusion Process,
Condensation, Infinitely--Many--Neutral--Alleles.}

\noindent\hspace{.1cm}{\small\textit{{MSC classification.}} Primary: 60K35,
60J25, Secondary:
82C22, 82C26.}


\setcounter{tocdepth}{2}
\begingroup
  \hypersetup{hidelinks}
  \tableofcontents
\endgroup

\section{Introduction}

The classical inclusion process with mean--field
interactions (without a slow phase) on a graph of
size $L$ is a conserved particle
system of $N$ indistinguishable particles with
generator 
\begin{equation}\label{eq_classic_IP}
\begin{aligned}
\mathfrak{L}_{L, N} f( \eta) =
\sum_{i,j =1}^{L}  \eta_{i}\big( \eta_{j} +  \tfrac{
\Theta}
{L} \big) [ f ( \eta^{i,j}) - f(
\eta)]\,,
\end{aligned}
\end{equation}
acting on functions on the state space $\Omega_{L,N}: = \{ \eta \in
\mathbf{N}_{0}^{L}\,:\, \sum_{i =1}^{L} \eta_{i} =N\}$. Here, $ \Theta >0 $ and  $\eta^{i,j}$
denotes the configuration $ \eta + e_{i} - e_{j} $, where one particle jumped
from $ i $ to $j$.
The generator \eqref{eq_classic_IP} represents two competing dynamics. On 
one hand, the rate $ \eta_{i} \eta_{j} $ encourages particles to cluster
together: Particles jump at unit rate and are attracted by one another at the
same rate. This leads to local accumulations, or clusters, of particles
attracting other particles at a large rate. On the other hand, a dissipative effect
causes every particle to jump to a uniformly
chosen position $j=1,\ldots, L$ at rate $ \tfrac{\Theta}{L}$.
The inclusion process, introduced in \cite{GKR07} as the dual of an energy transport
model, can also be understood in the context of population genetics. There it
describes a Moran-type population \cite{Mor58} of $N$ individuals and $L$ types, where $ \tfrac{\Theta}{L}$
represents the mutation rate, and resampling occurs at rate one.

The interest lies in studying the scaling behaviour of the system
\eqref{eq_classic_IP} in the \emph{thermodynamic limit} $N,L \to
\infty$ such that the density $\tdlim >0$ remains conserved, often abbreviated by simply
writing $ \tdlim$.
The attractive rates in
\eqref{eq_classic_IP} lead to particles clustering into fragments of diverging
size, a phenomenon known as \emph{condensation} \cite{EH05,CG14, EW14, God19}.
Note that the number of sites hosting such diverging chunks must be a
vanishing fraction of the volume, making the phenomenon difficult to analyse.
Instead, a new reference frame and statistics are necessary to observe the
condensate.
One is then concerned about the nature of these diverging clusters: \emph{How many
are there? What can we say about their sizes?}

Condensation in the inclusion process has been studied previously in
\cite{GRV11,GRV13, BDG17,
KS21, Kim21}, for a fixed lattice and diverging number of particles. 
In this case, there is only a single diverging cluster of particles.
The thermodynamic limit $\tdlim$ of \eqref{eq_classic_IP} was studied in
\cite{JCG19, CGG22}, where
the limiting stationary statistics of the condensate were
 determined for a general class of graphs. 
There the cluster sizes at stationarity were
found to be of order $\mO(N)$, with limiting sizes having Poisson--Dirichlet statistics, cf. \eqref{eq_def_PD}.
In particular, the number of diverging particle clusters extends
 non--trivially over infinitely many sites.
For the slowed down inclusion process, with generator $
L^{-1} \mathfrak{L}_{L,N}$,  a system of 
mean--field
equations has been derived 
and  propagation of chaos established \cite{Grosskinsky2019}. 
The evolution of a tagged particle,  driven by the aforementioned
mean-field limit,
 was described in \cite{kout24} in terms of an inhomogeneous dynamic.
See also the works \cite{Schlichting2019,lam2024variational}, 
which investigate more general  evolution equations for particle densities in exchange-driven growth models.
Lastly, the dynamic scaling limit of the inclusion process
\eqref{eq_classic_IP} was
recently identified as the Poisson--Dirichlet diffusion \cite{CGG24}. 
\\

Originally introduced in population genetics as the
\emph{infinitely-many-neutral-alleles model} \cite{EK81},
 the \emph{Poisson–Dirichlet (PD) diffusion} $ \overline{X}$
is a Feller diffusion $ \overline{X}=( \overline{X} (t))_{t \geqslant 0}$ on the Kingman simplex 
\begin{equation}\label{e_kingman}
\begin{aligned}
\overline{\nabla}
:=
\bigg\{x \in [0,1]^{\NN} \,:\, x_{1} \geqslant x_{2} \geqslant \cdots \ \text{and} \
\| x\|_{1}:= \sum_{i = 1}^{ \infty} x_{i} \leqslant 1 \bigg\}\,,
\end{aligned}
\end{equation}
which is equipped with the product topology. 
The stochastic process is 
 characterised by its infinitesimal (pre--)generator
\begin{equation}\label{e_gen_clPD} 
\begin{aligned}
\mA_{\gamma , \theta }f (x)
=
\sum_{i,j =1}^{ \infty} x_{i} (\gamma \, \delta_{i,j} - x_{j} )
\partial_{x_i}\partial_{x_j} f ( x)
- \theta \sum_{i =1}^{\infty} x_{i} \partial_{x_i}f (x)\,,\qquad f \in
\mD_{X}\,,
\end{aligned}
\end{equation}
for some $ \theta >0 $ and the choice $ \gamma =1 $. Here $ \delta_{i,j}$ represents
the Kronecker--\(\delta\), 
\begin{equation}\label{eq_def_DX}
\begin{aligned}
\mD_{X}
:=
\text{subalgebra of $ C ( \overline{\nabla})$ generated by } 1,
\varphi_{2}, \varphi_{3},\ldots\,,
\end{aligned}
\end{equation}
where $ \varphi_{m}(x) := \sum_{i=1}^{\infty} x_{i}^{m}$, and $\mA_{1, \theta}f$
is evaluated on $ \nabla:= \overline{\nabla} \cap \big\{ \sum_{i =1}^{\infty}
x_{i}=1\big\}$ and extended to $ C ( \overline{\nabla}) $ by continuity.

In the seminal work \cite{EK81} Ethier and Kurtz showed that the Poisson--Dirichlet
distribution $\mathrm{PD}( \theta)$, see \eqref{eq_def_PD} below, is the unique invariant distribution for $
\overline{X}$
and that the process appears naturally 
as the scaling limit of discrete Wright--Fisher models.
Since its introduction, the diffusion model has been subject to extensive
research and generalisation,
see for example \cite{EG87,EK87,Sc91,Ethier1992,Pe09,CBERS17}. 
As noted above, 
 the PD diffusion also describes 
the dynamic scaling limit 
of the inclusion process $ (\eta(t))_{t \geqslant 0} = (\eta^{(L,N)}(t) )_{t
\geqslant 0}$, generated by
\eqref{eq_classic_IP} \cite{CGG24}:
\begin{equation}\label{eq_conv_classIP}
\begin{aligned}
\Big(\frac{1}{N} \hat{\eta} (t)\Big)_{t \geqslant 0} \weakconv
\overline{X}\,, \qquad \text{ in } D ( [0, \infty), \overline{\nabla})\,,
\ \text{ as } \ \frac{N}{L} \to \rho\,.
\end{aligned}
\end{equation}
 Here $ \hat{\eta} \in \Omega_{L,N}$ denotes the
configuration $ \eta$ with entries shuffled such that they appear in decreasing
order $ \hat{\eta}_{1} \geqslant \hat{\eta}_{2} \geqslant \ldots \geqslant 
\hat{\eta}_{L}$.
\\

The purpose of this work is to study $ \mA_{ \gamma(\cdot), \theta(\cdot)}$ when both $\gamma=
\gamma(Y (t)) \in [0,1]$
and $\theta= \theta(Y (t)) \geqslant 0$ are functions of a deterministic control process $Y= (Y
(t))_{t \geqslant 0} $ taking values in a compact state space $ S_{Y}$.
We call the jointly generated process $(X,Y)=(X (t),
Y(t))_{t \geqslant 0}$ the \emph{modulated Poisson--Dirichlet
diffusion}, with compact (and metrisable) state space 
\begin{equation}\label{e_statespace}
\begin{aligned}
S := 
\bigg\{ (x,y) \in \overline{\nabla}\times S_{Y}\,:\,
\sum_{i=1}^{\infty} x_{i}
\leqslant  \gamma( y)
 \bigg\}\,,
\end{aligned}
\end{equation} 
which is equipped with the product topology. 
The article's main result is the derivation of $(X,Y)$ as the natural, and
to a certain extent universal, scaling limit of inclusion processes
with a non--trivial fluid phase.
In Proposition~\ref{prop_L_generator}, we also
 provide sufficient criteria for well--posedness of the
joint process $(X,Y)$ on $S$ as a Feller diffusion.\\



The inclusion process with a slow phase is a
generalisation
of the classical inclusion process \eqref{eq_classic_IP}.
To this end, we introduce a threshold $ A \in
\mathbf{N}$ which acts as the boundary of a slow phase in the following
sense: Particles below the threshold $ A$ jump
and attract other particles at a slower (vanishing) rate $ \sim \tfrac{1}{L}$, while
particles above the threshold jump and attract particles at unit rate, as in
\eqref{eq_classic_IP}.
This manipulation of rates traps a fraction of particles in the slow phase
(below $A$), creating a heterogeneous environment.

\begin{figure}[H]
\centering
\captionsetup{width=.8\linewidth}
\begin{subfigure}{0.4\textwidth}
\centering
    {
	\begin{tikzpicture}[scale = 0.6]
\draw (2.2,1.55) node[anchor=north]  {\textcolor{purple}{{\small $A$}}};
\draw (3,0) node[anchor=north]  {{{\scriptsize $1$}}};
\draw (10.2,0) node[anchor=north]  {{{\scriptsize $L$}}};

\draw[very thick, purple] (2.5,1.2) -- (10.5,1.2);

\draw[ thick] (9.8,.15) circle (0.15);

\foreach \x in {3, 3.4, 4.2, 4.6, 5, 5.4, 5.8, 6.2, 6.6, 7, 7.4, 7.8, 8.2,
8.6, 9, 9.4, 10.2} {
    \foreach \y in {0.15, 0.45} {
        \draw[ thick] (\x,\y) circle (0.15);
    }
}

\foreach \x in {3, 3.4, 4.2, 4.6, 5, 5.4, 5.8, 6.2, 6.6, 7, 7.8, 8.2,
8.6, 9, 9.4, 10.2} {
    \foreach \y in { .75} {
        \draw[ thick] (\x,\y) circle (0.15);
    }
}

\foreach \x in {3, 3.4, 4.2, 4.6, 5.4, 5.8, 6.2, 6.6, 7, 7.8, 8.2,
8.6, 9, 9.4, 10.2} {
    \foreach \y in {1.05} {
        \draw[ thick] (\x,\y) circle (0.15);
    }
}

\foreach \x in {3, 3.4, 4.2, 4.6, 5.8, 6.2, 6.6, 7, 7.8, 8.2,
8.6, 9, 9.4, 10.2} {
    \foreach \y in {1.35} {
        \draw[ thick] (\x,\y) circle (0.15);
    }
}

\foreach \x in {3, 3.4, 3.8, 4.2, 4.6, 5, 5.4, 5.8, 6.2, 6.6, 7, 7.4, 7.8, 8.2,
8.6, 9, 9.4, 10.2} {
	\draw[thick] (\x,0) -- (\x,-0.15) {};	
}

\foreach \y in {1.65} {
     \draw[ thick] (3,\y) circle (0.15);
}

\foreach \y in {1.35, 1.65,1.95,2.25,2.55,2.85} {
     \draw[ thick] (4.6,\y) circle (0.15);
}

\foreach \y in {1.35, 1.65,1.95} {
     \draw[ thick] (6.2,\y) circle (0.15);
}

\foreach \y in {1.35, 1.65,1.95,2.25} {
     \draw[ thick] (9,\y) circle (0.15);
}

\foreach \y in {1.35, 1.65,1.95,2.25,2.55} {
     \draw[ thick] (7.8,\y) circle (0.15);
}

\draw[very thick] (2.5,0) -- (10.5,0);

\end{tikzpicture}
    }
\end{subfigure} 
    \caption{
Sample particle configuration $ \eta$ with a slow phase.
}
\label{fig_slowphase}
\end{figure}
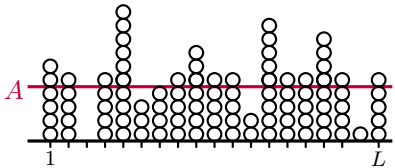

\noindent For instance, let us consider rates of the form 
\begin{equation}\label{eq_IP_example}
\begin{aligned}
\mathfrak{L}_{L, N} f( \eta) =
\sum_{i,j =1}^{L}  \big( (\eta_{i}-A)\mathds{1}_{ \eta_{i} >A} +  \tfrac{
\Theta}
{L}\, 
\mathds{1}_{ 0< \eta_{i} \leqslant A}\big) \big( (\eta_{j}-A)\mathds{1}_{ \eta_{j} >A}  +  \tfrac{
\Theta}
{L}
 \big) [ f ( \eta^{i,j}) - f(
\eta)]\,,
\end{aligned}
\end{equation}
which will act as a leading example
throughout the paper \cite{CGG22}.
If $ A=0$, there is no slow phase to accommodate particles
and we recover the classical inclusion process \eqref{eq_classic_IP}.
For $ A \geqslant 1$,
 sites with fewer than $ A$ particles act as
bottlenecks, restricting the flow of particles between the 
slow  and the fast 
phase (above $A$). 
This leads to slower equilibration, restricted by the exchange of mass between the
two phases.
Consequently, the evolution of particle clusters  will
depend on the available mass in the fast phase. 

The threshold of the slow phase in the stochastic particle system
\eqref{eq_IP_example} can be thought of 
as an activation energy barrier.
For energy quanta (particles) to contribute to another phase with increased
mobility, this threshold has to be overcome.
Let us also mention that the hydrodynamic behaviour of various particle systems with slow boundary reservoirs
has been subject of a series of works, see for example 
\cite{baldasso2017exclusion,bonorino2020hydrodynamics,franceschini2022symmetric}.
Alternatively, from a population genetics perspective, this model (like the classical
Poisson--Dirichlet diffusion) can be seen as a way to capture genetic
diversity. 
The threshold behavior in terms of $A$ 
may then represent the minimum concentration to facilitate reactions or
growth. 
In this direction, we also mention the works
\cite{greven2022spatial,den2022spatially} on seed--bank models
with dormancy.\\

We conclude the introduction by providing a brief description of the main result,
which is presented in detail in the next section.
Theorem~\ref{thm_main} below establishes, under an
embedding $ \embdcomb = \embdcomb^{(L,N)} : \Omega_{L,N} \to \overline{\nabla}
\times S_{Y}$, convergence of inclusion processes with a slow phase:
\begin{equation}\label{e_mainresult}
\begin{aligned}
(\embdcomb ( \eta(t)) \big)_{t \geqslant 0} \weakconv (X,Y)\,, \quad \text{as }
\tfrac{N}{L} \to \rho\,,
\end{aligned}
\end{equation}
where the limit
$(X,Y)$ is given in terms of the modulated PD diffusion.
$ X $ represents the evolution of
cluster sizes in the fast phase (above $A$) and $Y$ the evolution of particle occupations
in the slow phase (below $A$).
Recall that $Y$ modulates the drift
$ \theta$ and relative size $ \gamma$ of the macroscopic (fast) phase $X$. 
In particular, the sequence of inclusion processes exhibits condensation
whenever $ \gamma>0 $.
The convergence \eqref{e_mainresult} will be
proven for a general class of inclusion dynamics,
see Assumption~\ref{ass}. In particular, we allow for varying $\Theta $ and
perturbation of transition rates.
Our result offers new insights into heterogeneous inclusion-type models beyond
the static, equilibrium setting.

\section{Setup and main result}

For the well--posedness of the process $(X,Y)$, we restrict ourselves to the
case where $Y= (Y (t))_{t \geqslant 0}$ is 
 the solution of a system of differential
equations of the form
\begin{equation}\label{eq_ode}
\begin{aligned}
\frac{\ud}{\ud t} Y_{k}(t) = b_{k}(Y(t))\,, \quad k =0, \ldots, A-1\,,
\end{aligned}
\end{equation}
$ A \in \NN $, on the
simplex  
\begin{equation}\label{e_def_K}
\begin{aligned}
 S_{Y}:=
\bigg\{y \in [0, 1]^A\,:\, \sum_{k=0}^{A-1} y_{k} \leqslant 1 \bigg\}\,.
\end{aligned}
\end{equation}
Furthermore, we assume that the $ b_{k} \in C(S_{Y})$ are
Lipschitz and satisfy
\begin{equation}\label{e_cond_b1}
\begin{aligned}
b_{k}(y) \geqslant 0 \,, \text{ for all $y$ such that } y_{k}=0\,,
\end{aligned}
\end{equation}
\begin{equation}\label{e_cond_b2}
\begin{aligned}
\text{and } \quad \sum_{k =0}^{A-1} b_{k}(y) \leqslant  0\,,  \text{ for all } y \text{ such that }
\, \sum_{k =0}^{A-1} y_{k} = 1\,,
\end{aligned}
\end{equation}
ensuring that the process $Y $ does not exit $S_{Y}$.
Note that \eqref{eq_ode} has a unique solution by the Picard–Lindel\"of theorem.

In view of \eqref{eq_IP_example}, $Y_{k}(t)$ will represent the relative ratio
of sites that are occupied by $k$ particles:
\begin{equation}\label{eq_Y_motiv}
\begin{aligned}
Y_{k}(t) = \lim_{\tdlim} \frac{1}{L} \#_{k} \eta(t)\,, \qquad \text{where }\quad
\#_{k} \eta:=\sum_{i =1}^{L}
\mathds{1}_{\eta_{i}= k}\,, 
\end{aligned}
\end{equation}
which holds all the necessary information we need from the slow phase.
While being helpful to motivate its construction, the exact interpretation of $Y$ does not play a role in the
definition of the modulated process $(X,Y)$ at this stage.
Instead, it will be convenient to write the system of ODEs \eqref{eq_ode} in terms of
the generator
\begin{equation}\label{eq_gen_G}
\begin{aligned}
\mG h(y) = 
 \sum_{k =0 }^{A-1} b_{k}(y) \partial_{y_{k}} h(y)\,,
\quad \text{ for all } h \in \mD_{Y}\,,
\end{aligned}
\end{equation}
acting on the  polynomials
\begin{equation}\label{eq_def_DY}
\begin{aligned}
 \mD_{Y}:=\mathrm{span}\Big\{ y \mapsto y^{ \bf n} = \prod_{k =0}^{A-1}
y^{\bf n_{i}} \,, \ {\bf n} \in
\mathbf{N}^{A}_{0}\Big\}\,.
\end{aligned}
\end{equation}

\subsection{The modulated Poisson--Dirichlet diffusion}
 
Our first result concerns the well--posedness of the Feller process $(X,Y)$ on
$S$ \eqref{e_statespace}, when
the infinitesimal description $ \mA_{\gamma (y),\theta (y)}$ of $X (t)$ depends
on the state $Y(t)=y $.
The natural candidate for the combined generator is given in terms of
\begin{equation}\label{def_L_gen}
\begin{aligned}
\mL f(x,y) := \mA_{ \gamma(y), \theta(y)}f (\cdot, y )(x) + \mG f(x, \cdot)(y)\,,
\end{aligned}
\end{equation}
acting on functions $f$ in
\begin{equation}\label{e_dom_L} 
\begin{aligned}
\mD := 
\mathrm{span}\big\{
f \cdot g\,:\, f \in \mD_{X}\,, \ g \in \mD_{Y}
\big\}
\subset C (S)\,,
\end{aligned}
\end{equation}
where $ \mD_{X}$ and $ \mD_{Y}$ were introduced in \eqref{eq_def_DX} and
\eqref{eq_def_DY}.
Throughout the paper, we adopt the convention that $ \mA_{\gamma(y), \theta(y)}f $ in  \eqref{def_L_gen}
is evaluated on
\begin{equation}\label{eq_boundaryS}
\begin{aligned}
\partial S := \Big\{
(x,y) \in S\, : \, 
\sum_{i =1}^{\infty} x_{i} = \gamma (y)
\Big\}
\end{aligned}
\end{equation}
and extended to $S$ by continuity. This agrees with the convention for $ 
\mA_{1, \theta}$, employed in \cite{EK81}, see the comment below
\eqref{eq_def_DX}.

\begin{assumption}\label{ass_def_L}
\ 
\begin{enumerate}[label=(\alph*)]

\item 
The level set 
\begin{equation}\label{eq_SyStar}
\begin{aligned}
S_{Y}^{*}:= \{y \in S_{Y}\,:\, \gamma(y)=0\} = S_{Y}\setminus \supp \gamma
\end{aligned}
\end{equation}
is absorbing for  $\mG $,
in the sense that $ y \in S_{Y}^{*}$ implies $ b_{k}(y)=0$,
for all $k =0, \ldots, A-1$.

\item The drift coefficients $
(b_{k})_{k=0, \ldots, A-1 }$ in \eqref{eq_gen_G} are
Lipschitz continuous and satisfy the
boundary conditions
\eqref{e_cond_b1} and \eqref{e_cond_b2}.

\item The modulation parameters of  $\mA_{ \gamma(\cdot), \theta(\cdot)} $ in
\eqref{e_gen_clPD} are Lipschitz continuous, i.e. they satisfy  $  \theta \in C (S_{Y}, [0,
\infty))$ and $\gamma \in  C(S_{Y}, [0,1]) $. Moreover,
\begin{equation*}
\begin{aligned}
 \gamma \in C^{1}(\overline{ \supp \gamma}, [0,1])\,.
\end{aligned}
\end{equation*}

\item The function $ \beta:  \supp \gamma \to [0, \infty)$ with
\begin{equation}\label{eq_def_beta}
\begin{aligned}
\beta(y):= \sum_{k=0}^{A-1}
b_{k}(y) 
\frac{\partial_{y_{k}} \gamma(y)}{ \gamma(y)}
+
\theta (y) 
\,,
\end{aligned}
\end{equation}
is Lipschitz continuous and non--negative.
We will denote by $ \overline{\beta} \in C ( S_{Y}, [0, \infty) )$ an arbitrary continuous extension
of $ \beta$. 

\end{enumerate}

\end{assumption}

The following proposition states that $\mL$ is indeed  
the generator of a Feller process $(X,Y)$ on $ S$, which we  call the
\emph{modulated Poisson--Dirichlet diffusion}.

\begin{proposition}[The modulated PD diffusion]\label{prop_L_generator}
Let 
$\mA_{ \gamma, \theta}$ and $\mG $ be given by \eqref{e_gen_clPD} and
\eqref{eq_gen_G},
with $( b_{k})_{k =0, \ldots, A-1}$, $ \gamma$ and $ \theta$
satisfying Assumption~\ref{ass_def_L}.
Then the closure of the linear operator~\eqref{def_L_gen},
\begin{equation*}
\begin{aligned}
\mL f = \mA_{ \gamma, \theta} f + \mG f\,, \qquad  f \in \mD\,, 
\end{aligned}
\end{equation*}
 generates  
a Feller process $ (X (t), Y (t))_{t \geqslant 0}$ with
sample paths in $ C ([0, \infty), S)$.
\end{proposition}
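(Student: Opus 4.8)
The plan is to verify the hypotheses of the Hille--Yosida theorem (in the form tailored to Feller semigroups, e.g. the version with a core in \cite{EK86}), by exhibiting $\mD$ as a core on which $\mL$ satisfies: (i) $\mD$ is dense in $C(S)$, (ii) $\mL$ maps $\mD$ into $C(S)$, (iii) $\mL$ satisfies the positive maximum principle, and (iv) the range condition $\overline{(\lambda - \mL)\mD} = C(S)$ for some (hence all) $\lambda>0$. Density of $\mD$ follows from the Stone--Weierstrass theorem: $\mD$ is an algebra (it is the span of products of the algebras $\mD_X$ and $\mD_Y$, and one checks it is closed under multiplication), it contains the constants, and it separates points of $S$ --- the $\varphi_m$ separate points of $\overline{\nabla}$ as in \cite{EK81}, and the coordinate monomials separate points of $S_Y$. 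The fact that $\mL$ maps $\mD$ into $C(S)$ is where the boundary conventions enter: for $f = \varphi \cdot g$ with $\varphi \in \mD_X$, $g \in \mD_Y$, one computes $\mL f$ explicitly and must check that the terms $\mA_{\gamma(y),\theta(y)}\varphi(\cdot,y)(x)$, a priori defined only on $\partial S$ via the constraint $\sum_i x_i = \gamma(y)$, extend continuously to all of $S$; this is precisely the role of Assumption~\ref{ass_def_L}(c)--(d), in particular the $C^1$-regularity of $\gamma$ on $\overline{\supp\gamma}$ and the non-negativity and continuity of $\beta$, which governs the effective drift picked up when re-expressing derivatives across the moving boundary. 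The absorption condition (a) ensures no inconsistency arises where $\gamma$ vanishes.

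The positive maximum principle is the analytic heart and should be checked by a decomposition argument: if $f \in \mD$ attains its maximum over $S$ at a point $(x_0,y_0)$, then $\mL f(x_0,y_0) = \mA_{\gamma(y_0),\theta(y_0)} f(\cdot,y_0)(x_0) + \mG f(x_0,\cdot)(y_0) \leqslant 0$. The second term is handled by the standard observation for first-order operators on a simplex: at an interior maximum the gradient in $y$ vanishes, and at a boundary maximum the inward-pointing conditions \eqref{e_cond_b1}--\eqref{e_cond_b2} force $\sum_k b_k(y_0)\partial_{y_k} h(y_0) \leqslant 0$ --- this is the classical reason those sign conditions were imposed. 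The first term is the positive maximum principle for the Poisson--Dirichlet generator $\mA_{\gamma,\theta}$ on the rescaled simplex $\{\|x\|_1 \leqslant \gamma(y_0)\}$, which reduces by the linear change of variables $x \mapsto x/\gamma(y_0)$ (when $\gamma(y_0)>0$) to the known fact, proved by Ethier and Kurtz, that $\mA_{1,\theta}$ satisfies the positive maximum principle on $\overline{\nabla}$, noting $\theta(y_0) \geqslant 0$; when $\gamma(y_0)=0$ the $x$-component is a single point and the term is zero. A subtle point is that the two variables are coupled through the state space constraint $\|x\|_1 \leqslant \gamma(y)$, so that at a boundary point of $S$ where $\|x_0\|_1 = \gamma(y_0)$ one must make sure the $y$-derivative terms and the $x$-diffusion terms do not conspire; the convention of evaluating $\mA_{\gamma(y),\theta(y)}$ on $\partial S$ and extending by continuity, together with the explicit form of $\beta$, is designed exactly so that $\mL f$ is a genuine continuous function and the two pieces can be estimated separately.

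For the range condition it suffices, once the positive maximum principle and $\mL:\mD\to C(S)$ are in hand, to show that $(\lambda - \mL)\mD$ is dense in $C(S)$ for some $\lambda>0$; by the standard argument this reduces to solving $(\lambda - \mL) f = g$ approximately for $g$ in the dense set $\mD$. Here I would exploit the algebraic structure: $\mA_{\gamma,\theta}$ and $\mG$ both map a natural filtration of $\mD$ by polynomial degree into itself (the Ethier--Kurtz generator lowers or preserves degree in the $\varphi_m$, and $\mG$ preserves the monomial structure while the coefficients $b_k$, $\theta$, $\gamma$, $\beta$ need to be handled --- since these are only Lipschitz, not polynomial, one first proves the result when they are polynomial, using the finite-dimensional triangular structure to invert $\lambda - \mL$ on each finite-dimensional graded piece, and then passes to general Lipschitz coefficients by a uniform approximation argument, controlling $\|\mL_n f - \mL f\|_\infty$ and invoking a perturbation/limit lemma for Feller generators). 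This last approximation step, reconciling the non-polynomial coefficients with the polynomial core, is the step I expect to be the main obstacle, because one must ensure the approximating generators $\mL_n$ are themselves Feller generators on $S$ (which requires the approximants to $b_k$, $\gamma$, $\theta$, $\beta$ to preserve all of Assumption~\ref{ass_def_L}, including the compatibility conditions at the boundary and on $S_Y^*$) and that the graph-norm convergence is strong enough to transfer generation; alternatively one bypasses it by quoting a general result on martingale problems for operators of the form diffusion-plus-drift with continuous coefficients on compact state spaces, but in the present degenerate setting (the diffusion matrix $x_i(\gamma\delta_{ij}-x_j)$ degenerates at the boundary and $\gamma$ may vanish) such a black box is not directly available, so the hands-on core argument seems unavoidable.
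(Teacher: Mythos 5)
Your route (Hille--Yosida: density, positive maximum principle, range condition) is genuinely different from the paper's, which instead proves well-posedness of the martingale problem — existence by projecting finite-dimensional Wright--Fisher diffusions through the rescaling $\proj(z,y)=(\gamma(y)\widehat{\overline z},y)$, uniqueness by a closed moment hierarchy that exploits the fact that $Y$ is deterministic. Unfortunately, as written your two hard steps both have genuine gaps. For the positive maximum principle, the decomposition ``handle $\mA$ and $\mG$ separately'' breaks down exactly at the points you flag but do not resolve: if $f\in\mD$ attains its maximum over $S$ at $(x_0,y_0)$ with $\sum_i (x_0)_i=\gamma(y_0)$ and $y_0$ interior to $S_Y$, the maximum is only over the coupled set \eqref{e_statespace}, so $y\mapsto f(x_0,y)$ need not have a maximum at $y_0$; its $y$-gradient need not vanish, and the inward conditions \eqref{e_cond_b1}--\eqref{e_cond_b2} concern $\partial S_Y$, not the moving constraint $\|x\|_1=\gamma(y)$. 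Hence you get no sign on $\mG f(x_0,\cdot)(y_0)$ by itself, and the needed inequality $\mL f(x_0,y_0)\leqslant 0$ requires a cancellation between the boundary drift of $\mA_{\gamma(y_0),\theta(y_0)}$ and the transport of the constraint by $b$ — this is precisely what the quantity $\beta$ in \eqref{eq_def_beta} encodes, and the paper sidesteps the issue entirely by never proving a PMP for $\mL$ on $S$, working instead with the decoupled operator $\mA_{1,\beta}+\mG$ on $\overline{\nabla}\times S_Y$.

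The range condition is a second, independent gap. Since $\gamma,\theta,b_k$ are only Lipschitz, $\mL$ does not map $\mD$ into $\mD$, so there is no invariant graded structure to invert; and even after replacing the coefficients by polynomials, the filtration you invoke is not preserved: applying $\mA_{\gamma(y),\theta(y)}$ multiplies by $\gamma(y)$ and $\theta(y)$, and $\mG$ has coefficients $b_k(y)$ that are (in the application) quadratic, so the $y$-degree strictly increases and no finite-dimensional subspace of $\mD$ is invariant — the triangular inversion of $\lambda-\mL$ on graded pieces is not available. (The paper's hierarchy works for a different reason: because $Y(t)$ is deterministic, the coefficients become known scalar functions of time and the moment equations close as ODEs graded by the $x$-degree alone.) Finally, the proposed rescue — approximate by operators $\mL_n$ with polynomial coefficients and ``transfer generation'' — is circular in the direction you need it: Trotter--Kurtz-type results deduce semigroup convergence \emph{from} the assumption that the closure of the limit operator generates (or from uniform resolvent/range control), so convergence of $\mL_n f\to\mL f$ on $\mD$ plus generation of each $\mL_n$ does not by itself yield that $\overline{\mL}$ generates, nor that $\mD$ is a core for the limit. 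To make your strategy work you would need either a direct verification of the range condition (not supplied) or a probabilistic uniqueness argument — at which point you are essentially back to the paper's martingale-problem proof.
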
 

We will prove the proposition by showing that the martingale problem for $\mL$
is well--posed. 
Existence of solutions for the martingale problem follows by approximation of
finite-dimensionsional processes, Lemma~\ref{lem_mp_exists}. 
Uniqueness is a consequence of an explicit hierarchy of equations,
characterising the moments of any solution to the martingale problem uniquely,
see Lemma~\ref{lem_hierarchy}.
The proof of Proposition~\ref{prop_L_generator} is stated in the beginning of
Section~\ref{sec_mod_PD}.

Assumption~\ref{ass_def_L}.(a)~and~(b) are natural in view of $
\mA_{ \gamma, \theta}$, $\mG $ and the definition of the state space $S$, because $
\gamma(y)=0$ necessarily implies $ x=0$.
On the other hand, Assumption~\ref{ass_def_L}.(c)~and~(d) are technical requirements
to represent the modulated PD diffusion in terms of a rescaled classical PD diffusion on
$ \overline{\nabla} \times S_{Y}$, see Lemma~\ref{lem_prop}.
Regarding the existence of the process, it should be possible to lift the last
two assumptions by directly constructing the process on $S$.
\\

The modulated PD diffusion $(X,Y)$ of Proposition~\ref{prop_L_generator} provides a
natural generalisation of the classical PD diffusion $ \overline{X}$, generated
by \eqref{e_gen_clPD}.
A rescaling argument (Lemma~\ref{lem_prop}) shows that the marginal process $X$ of $(X,Y)$ can
be linked to the PD diffusion $
\overline{X}$ on $ \overline{\nabla}$, generated by $ \mA_{1, \beta }$ with $
\beta=  \beta( \theta)$ introduced in \eqref{eq_def_beta}:
\begin{equation*}
\begin{aligned}
X(t) = \gamma(Y(t)) \overline{X}(t)\,.
\end{aligned}
\end{equation*}
Thus, several properties of the classical and modulated PD diffusion are shared, by analogy.
For example, while for the classical PD diffusion $ \overline{X}$ almost surely
\begin{equation*}
\begin{aligned}
 \sum_{i =1}^{\infty} \overline{X}_{i}(t)=1 \,, \qquad \forall t >0 \,,
\end{aligned}
\end{equation*}
for
any initial
condition, cf. \cite[Theorem~2.6]{EK81},  $ (X,Y)$ is supported for all
positive times on $\partial S $ \eqref{eq_boundaryS}, see Corollary~\ref{cor_concentration}. 
Moreover, the unique invariant for the classical PD diffusion $ \overline{X}$, generated by
$ \mA_{1, \Theta}$, is the Poisson--Dirichlet distribution $\mathrm{PD}(\Theta
)$.
Similarly, 
the extremal stationary states of $X$ are given in terms of 
\begin{equation*}
\begin{aligned}
\mathrm{PD}_{[0, \gamma(y^{*})]}( \theta ( y^{*}))
\,, 
\end{aligned}
\end{equation*}
for all stationary states $y^{*} \in S_{Y}$ of $ \mG$.

The PD distribution $ \mathrm{PD} ( \Theta)$
 is a probability measure
on $ \overline{\nabla}$, first introduced by Kingman \cite{Kin75} as natural
limit of Dirichlet distributions. 
Another elegant construction of $\mathrm{PD}( \Theta)$ is based on a
stick--breaking procedure: Given a family of independent $\mathrm{Beta}(1,
\Theta )$ random variables $( U_{i})_{i \in \mathbf{N}}$, construct $ (
V_{i})_{i \in \mathbf{N}}$ recursively by 
\begin{equation}\label{eq_def_PD}
\begin{aligned}
V_{1} := U_{1}
\qquad \text{and}\qquad 
V_{i +1} := U_{i+1} \Big( 1 - \sum_{j=1}^{i} V_{j}\Big)\,.
\end{aligned}
\end{equation}
Then rearranging $ (
V_{i})_{i \in \mathbf{N}}$ into the ordered vector $ (\hat{V}_{1},
\hat{V}_{2} , \ldots) \in \overline{\nabla}$, with $
\hat{V}_{i} \geqslant \hat{V}_{i+1}$, yields a random variable with law
$\mathrm{PD}( \Theta)$ \cite{Fen10}.
The distribution $\mathrm{PD}_{[0, \gamma]} ( \Theta)$ is then constructed by
multiplying every entry of $\mathrm{PD}( \Theta)$ with $
\gamma$, with the special case $ \mathrm{PD}_{\{0\}}(
\Theta)= (1,0,0,\ldots)$ almost surely.\\


The Poisson–Dirichlet diffusion was previously extended to the two-parameter
setting in \cite{Pe09,Feng2009}, yielding a stochastic process with the unique invariant measure $ \mathrm{PD}
( \alpha, \theta)$. 
 A Wright–Fisher derivation of this process has been provided in \cite{CBERS17}. 
See also \cite{Forman2021,Forman2022,Forman2023} for representations of the two-parameter process as Fleming–Viot-type measure-valued diffusion.
In contrast, the modulated Poisson–Dirichlet diffusion of Proposition~\ref{prop_L_generator} 
remains within the one-parameter framework but introduces a generalisation through varying parameters governed by a control process. 
 
For the present work, it suffices to restrict to a deterministic control process $ (Y(t))_{t \geqslant 0}$
on the particular state space $ S_{Y}$. However, it should be possible to
generalise the setup to stochastic control processes on more
general spaces.

\subsection{Scaling limit of the inclusion process with a slow phase}

We consider the following general form of the inclusion process on the complete
graph: 
\begin{equation}\label{e_IP_gen}
\begin{aligned}
\mathfrak{L}_{L, N} f( \eta) :=
\sum_{i,j =1}^{L} u_{1} ( \eta_{i}) u_{2}( \eta_{j}) [ f ( \eta^{i,j}) - f(
\eta)]\,,
\end{aligned}
\end{equation}
where $ u_{1}= u_{1, L}, u_{2}= u_{1, L} \geqslant 0$
 satisfy the following generalisation
of the rates \eqref{eq_IP_example}:
\begin{assumption}\label{ass}
Assume there exists  $ A \in \mathbf{N}_{0}$ and $ \zeta_{L}= \mO (
\tfrac{1}{L})$ such that transition rates can be divided into two phases.
    \begin{enumerate}[label=(\alph*)]
        \item The \emph{slow phase} of positions with no more than $A$ particles on
them. Their transition rates scale as the inverse of the system size,
in the sense that there exist $r_{k}
\geqslant 
q_{k} > 0$ such that 
\begin{equation}\label{eq_def_qr}
\begin{aligned}
            | u_1(k) \, L - q_{k}|&\leqslant
\zeta_{L}\,, \quad \text{for all } k=1, \ldots, A\,, \\
 | u_{2}(k) \, L - r_{k}| & \leqslant \zeta_{L}\,, \quad \text{for all } k=0, \ldots, A \,.
\end{aligned}
\end{equation}
We will write $ \overline{q}:= \max_{k =0, \ldots, A} q_{k}$ and $
\underline{q}:= \min_{k =1, \ldots, A} q_{k}$, and define $ \overline{r}$
and $\underline{r}$ in the same manner.
Moreover, we set $ q_{0}:=0$.

\item The \emph{fast phase} of sites 
with occupation exceeding $A$ particles. Their rates are uniformly close to the
 number of particles that exceed the threshold $ A$:
       \begin{align}
            \sup_{n >A}
            \big|
                u_{\iota}(n) - (n-A)
            \big| \leqslant \zeta_{L} \,, \qquad \text{for}\ \iota=1,2\,.
        \end{align}
        
    \end{enumerate}
\end{assumption}

First, we embed particle configurations $ \eta
\in \Omega_{L,N}$ into the common state space $ \overline{\nabla}\times
S_{Y}$. This can be
done similar to
\eqref{eq_conv_classIP}, where no slow phase was present, with the caveat that now we have to introduce observables tracking the
evolution of slow particles. To this end, we define the maps $\embdcomb=
\embdcomb^{(L,N)}:\Omega_{L,N} \to \overline{\nabla} \times S_{Y} $ with 
\begin{equation}\label{e_def_embd_direct}
\begin{aligned}
\embdcomb ( \eta) := 
 \big( \tfrac{1}{N} \widehat{ \eta}^{\,+}, \tfrac{1}{L} (\#_{0}
\eta, \ldots , \#_{A-1} \eta )\big)\,,
\end{aligned}
\end{equation}
where $\#_{k}$  was introduced in \eqref{eq_Y_motiv} and $
\eta_{i}^{+}:=\max\{ \eta_{i}- A,0\}$ denotes the number of particles
 above the
threshold $ A$.
Recall that $ \widehat{\cdot}$ reorders entries of a vector in decreasing order.
Note that $\embdcomb (\eta)$ does not necessarily lie in $S$, whose definition
will depend on $ \rho \neq \tfrac{N}{L} $.
 \\

For the inclusion process with a slow phase,  $ Y(t) $ will represent the limiting particle occupations of $ \eta
(t)$ in the slow phase, see \eqref{e_def_embd_direct}.
It is a simple observation that particles that are not present in the slow phase
must necessarily be located in
the fast phase.
In view of the embedding $\embdcomb$, it is therefore natural to
choose the function $ \gamma$ in
the definition of the state space $S$ \eqref{e_statespace} 
as
\begin{equation}\label{e_def_gamma}
\begin{aligned}
\gamma (y) := \bigg(1- \frac{1}{ \rho} \sum_{k=0}^{A} k y_{k}\bigg)_{+} \,,
\end{aligned}
\end{equation}
where $(\, \cdot \, )_{+}= \max\{\, \cdot\, , 0\}$ denotes the positive part.
Note that the quantity inside $ ( \, \cdot \, )_{+}$ is only negative if the
configuration $y$
exceeds the particle density $ \rho$.
 \\

We now present the article's main result: The convergence of the
inclusion process with a slow phase to the modulated Poisson--Dirichlet
diffusion $(X,Y)$, with a deterministic control $ (Y (t))_{t \geqslant 0}$
 generated by 
\begin{equation}\label{eq_G_IP}
\begin{aligned}
\mG h (y) =
\rho\, \gamma(y)
\sum_{k =0}^{A-1}
\big(
q_{k +1} y_{k +1} + r_{k -1} y_{k -1}
- (q_{k}+ r_{k}) y_{k}
\big)
\partial_{y_{k}}h (y)\,.
\end{aligned}
\end{equation}
The constants $ (q_{k})_{k =0, \ldots, A}$ and $ (r_{k})_{k =0, \ldots, A}$ were
defined in \eqref{eq_def_qr}. 
Moreover, we set  $ y_{A}:= 1 - ( y_{0}+ \cdots + y_{A-1})$
and $y_{-1}:=0$.

\begin{theorem}\label{thm_main}
Let $ (\eta^{(L,N)}(t))_{t \geqslant 0}$ be the inclusion process generated by  
$ \mathfrak{L}_{L, N} $ \eqref{e_IP_gen}, with Assumption~\ref{ass}
 satisfied for some $ A \in \mathbf{N}_{0}$. 
If $ \embdcomb ( \eta^{(L,N)} (0))\to (x, y) \in S$ as $ \tfrac{N}{L} \to \rho >0$,
then
\begin{equation*}
\begin{aligned}
\big(\embdcomb ( \eta^{(L,N)}(t))\big)_{t \geqslant 0} 
\weakconv 
\big( X (t), Y (t)\big)_{t \geqslant 0}\,, \qquad \text{in }\ \
 D ([0, \infty),
\overline{\nabla} \times S_{Y})\,.
\end{aligned}
\end{equation*}
Here, $( X (t), Y (t))_{t \geqslant 0}$ denotes the modulated
Poisson--Dirichlet diffusion started
from $(x,y)$, see Proposition~\ref{prop_L_generator}, with 
$\mG $ as in \eqref{eq_G_IP},
\begin{equation}\label{e_def_theta}
\begin{aligned}
\gamma( y) = 
\bigg(
1- \frac{1}{ \rho} \sum_{k=0}^{A} k y_{k}
\bigg)_{+}
\quad \text{and}\quad
\theta(y) =
\sum_{k =0}^{A } (r_{k}- q_{k}) y_{k}\,,
\end{aligned}
\end{equation}
where $ y_{A}:= 1- \sum_{k=0}^{A-1} y_{k}$.
\end{theorem}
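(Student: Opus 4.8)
\textbf{Proof strategy for Theorem~\ref{thm_main}.} The plan is to establish weak convergence via the martingale problem characterisation, using Proposition~\ref{prop_L_generator} to identify the limit: since the closure of $\mL$ on $\mD$ generates the Feller process $(X,Y)$ and $\mD$ is a core, it suffices to prove tightness of $(\embdcomb(\eta^{(L,N)}(t)))_{t\geqslant 0}$ in $D([0,\infty),\overline{\nabla}\times S_Y)$ together with convergence of generators on $\mD$, i.e.\ that for each $f\in\mD$, $\mfL_{L,N}(f\circ\embdcomb)-(\mL f)\circ\embdcomb\to 0$ in a suitable sense along $\tfrac{N}{L}\to\rho$. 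The candidate pre-limit operator acts on polynomials in $\varphi_m$ of the normalised fast-phase vector $\tfrac1N\widehat\eta^{\,+}$ and in the monomials $y^{\bf n}$ of $\tfrac1L(\#_0\eta,\dots,\#_{A-1}\eta)$; one computes $\mfL_{L,N}$ applied to such functions explicitly using the jump rates $u_1,u_2$ and Assumption~\ref{ass}.

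The first main block is the \emph{slow-phase / control process}. Applying $\mfL_{L,N}$ to functions of $\tfrac1L(\#_0\eta,\dots,\#_{A-1}\eta)$ alone, the jumps $i\to j$ change the occupation counts $\#_k$ by $O(1)$ with rates governed by $u_1(\eta_i)u_2(\eta_j)$; after summing, the dominant contribution comes from pairs where the \emph{source} $i$ lies in the fast phase (rate $\sim L\cdot\tfrac1N\cdot(\text{fast mass})$ when the target is slow, using $u_1(n)\approx n-A$ and $\sum_i\eta_i^+\approx N\gamma$) and the \emph{target} $j$ lies in the slow phase, and vice versa. Here one sees the factor $\rho\gamma(y)$ emerge: the fast mass is $\approx N\gamma(y)$, divided by $L$ and sent through rates scaled by $1/L$, using $N/L\to\rho$. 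The birth-death structure $q_{k+1}y_{k+1}+r_{k-1}y_{k-1}-(q_k+r_k)y_k$ in \eqref{eq_G_IP} comes from: a particle leaving a $k$-site ($k\mapsto k-1$, rate $\propto q_k$), a particle arriving at a $k$-site ($k\mapsto k+1$, rate $\propto r_k$), and the reverse moves into state $k$. One must check the second-order (diffusive) terms in this computation are $O(1/L)$ and hence vanish — the slow phase has no diffusion in the limit — and verify \eqref{e_cond_b1}--\eqref{e_cond_b2} and Assumption~\ref{ass_def_L}(a), the last because $\gamma(y)=0$ forces all drift coefficients to vanish.

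The second block is the \emph{fast-phase PD diffusion with modulated parameters}. Here one reuses the generator computation from \cite{CGG24} for the classical case, applied to $\varphi_m(\tfrac1N\widehat\eta^{\,+})=\tfrac1{N^m}\sum_i(\eta_i^+)^m$. The fast–fast jumps (both $i,j$ above $A$) reproduce, after rescaling by the total fast mass $\|\tfrac1N\widehat\eta^{\,+}\|_1\to\gamma(y)$, the Wright–Fisher/PD generator $\mA_{\gamma(y),\theta(y)}$ with $\gamma(y)$ as in \eqref{e_def_theta}; the linear drift term $-\theta\sum x_i\partial_{x_i}$ picks up $\theta(y)=\sum_k(r_k-q_k)y_k$ from the \emph{flux across the threshold}: particles entering the fast phase from $A$-sites at rate $\propto r_A$ versus particles dropping from $(A{+}1)$-sites to $A$-sites at rate $\propto q_{A+1}\approx 1$ effectively, combined with the mutation-type contributions $r_k-q_k$ for $k<A$. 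This is exactly the point where the paper's abstract flags ``non-vanishing deterministic error bounds'': the naive generator-difference estimate leaves an $O(1)$ remainder concentrated on configurations where the fast mass is spread over too many sites, i.e.\ where the $\varphi_m$ of the fast vector are not yet close to their condensed values.

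\textbf{Main obstacle.} The crux — and where I would spend the real effort — is supplying the missing probabilistic input, \emph{instantaneous condensation}: one must show that for every $\delta>0$ and every $t>0$, with probability tending to one the fast-phase mass $\sum_i\eta_i^+$ is carried by $o(L)$ sites (indeed concentrates on a vanishing volume fraction immediately), so that the residual terms in the generator comparison — which are controlled only by quantities like $\tfrac1N\sum_i(\eta_i^+)\mathds1_{\eta_i^+>0}$ counting ``occupied fast sites'' — vanish in the limit when integrated against the dynamics. Concretely, I would prove a hitting-time/entrance-law estimate: starting from any configuration, after time $\ve$ the empirical measure of fast-site sizes (normalised by $N$) has put all but $\delta$ of its mass on sites of size $\geqslant cN$, uniformly in $L,N$ large; this uses the super-linear attraction $u_1(n)u_2(n)\sim(n-A)^2$ in the fast phase to show small clusters are absorbed into large ones on an $O(1)$ (in fact arbitrarily short, after rescaling) timescale, while the dissipative $O(1/L)$ terms cannot counteract this on finite time horizons. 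Once this is in hand, the generator differences vanish after localising to the ``condensed'' event, tightness follows from the Aldous–Rebolledo criterion applied to the (uniformly bounded) martingales $f\circ\embdcomb(\eta(t))-\int_0^t\mfL_{L,N}(f\circ\embdcomb)(\eta(s))\,ds$ together with the compactness of $\overline{\nabla}\times S_Y$, and any subsequential limit solves the $\mL$-martingale problem on $\mD$, hence equals $(X,Y)$ by the uniqueness in Proposition~\ref{prop_L_generator}. A minor technical point to handle along the way is that $\embdcomb(\eta)$ need not lie in $S$ for finite $N,L$ (the constraint involves $\rho$, not $N/L$), so the convergence is stated in the ambient space $\overline{\nabla}\times S_Y$ and one checks the limit is supported in $S$ via the consistency of $\gamma$ with the conserved density.
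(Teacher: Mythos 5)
Your overall architecture is the same as the paper's: a generator comparison showing $\mfL_{L,N}(f\circ\embdcomb)-\mL f(\embdcomb(\cdot))$ is small up to an error proportional to the volume fraction $\#_{>A}\eta/L$ of fast sites (this is Lemma~\ref{lem_gen_est}), a probabilistic input making that error vanish along the dynamics, tightness from the uniformly bounded martingales, and identification of subsequential limits through the martingale problem and the uniqueness supplied by Proposition~\ref{prop_L_generator} (Lemma~\ref{lem_hierarchy}). You also correctly recognise that uniform convergence of generators is impossible and that the error can only be killed when integrated against the dynamics.

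The genuine gap is in the step you yourself flag as the crux, instantaneous condensation. What you propose to prove is a much stronger entrance-law statement (after any time $\ve$, all but $\delta$ of the fast mass sits on clusters of size at least $cN$, uniformly in the initial configuration), and the mechanism you invoke --- ``super-linear attraction $u_1(n)u_2(n)\sim(n-A)^2$ absorbs small clusters into large ones'' --- is a restatement of the phenomenon rather than an argument: the inclusion attraction between two \emph{given} small fast sites is only $O(1)$, and if no macroscopic cluster exists initially (e.g.\ all sites carrying $A+1$ particles) nothing in the sketch explains how one forms in arbitrarily short time, uniformly in $L,N$. What the proof actually requires is only the weaker statement of Proposition~\ref{prop_occ}, that $\int_0^T \#_{>A}\eta(t)/L\,\ud t\to0$ in probability, and the working mechanism is different: one first shows the a priori estimate that $\gamma_N(\eta(t))$ stays bounded away from zero on $[0,T]$ when it starts positive (Lemma~\ref{lem_apriori}, with the degenerate case $\gamma_N\to0$ treated separately and trivially), then uses the complete-graph symmetry to reduce to the single tagged coordinate $\eta_1(t)$, and couples it, on the event $\inf_t\gamma_N>\delta$, with a homogeneous random walk reflected at $A$ whose jump rate above $A$ is of order $L$, whose upward bias is $\tfrac12+O(\zeta_L)$, and whose holding rate at $A$ is $O(1)$ (Lemma~\ref{lem_coupling}, Corollary~\ref{cor_coupling}); excursions above $A$ then last $O(1/L)$ and their total length vanishes (Lemma~\ref{lem_first_site}). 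Your outline contains neither the symmetrisation/tagged-site reduction nor the a priori control of $\gamma_N$ that makes the single-site comparison possible, so as written the key probabilistic step remains unproved; the rest of your plan is sound and matches the paper.
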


Theorem~\ref{thm_main} fully characterises the scaling limit of the condensed
(fast) phase of the inclusion process, and its interaction with the slow phase.
Our result demonstrates how microscopic changes in interaction rates can induce nontrivial
macroscopic transport effects.
It shows that while the slow phase traps particles for a longer time,
equilibration occurs on the same time scale as the evolution of
cluster sizes in the condensate. 
In particular, it provides an explicit description of the (deterministic)
evolution of mass $ \gamma (Y (t))$ in the condensed phase.
\\

Let us put the result into context. We recall the specific rates
\eqref{eq_IP_example} in which case $ \theta( \cdot) = \Theta >0$ is fixed.
The generator $\mG $ of the deterministic process $(Y (t) )_{t \geqslant
0}$ then corresponds to the system of differential equations
\begin{equation}\label{eq_ODEs_example}
\begin{aligned}
\frac{\ud Y_{k}(t)}{\ud t} =
 \Theta \,\rho\, \gamma( Y(t))\, 
 \big( Y_{k+1}(t) + Y_{k-1}(t) - (2- \delta_{0,k})
Y_{k}(t)\big)\,, \qquad k =0, \ldots, A-1\,,
\end{aligned}
\end{equation}
where once again $ Y_{-1}(t)=0$ and $ Y_{A}(t) = 1 -( Y_{0}(t) +\cdots +
Y_{A-1}(t) )$. 
In particular, for $ A=0$ the process $(Y(t))_{t \geqslant 0}$ does not exist
and $ \gamma \equiv 1$.
Hence, the system fully condenses into
macroscopic clusters for all $t>0$. 
Therefore, Theorem~\ref{thm_main} recovers the convergence of the inclusion process to the classical PD diffusion
generated by $ \mA_{1, \Theta}$  \cite{CGG24}, see also~\eqref{eq_conv_classIP}.

In a broader context, we note that the finite system of equations associated to $ \mG $ is of
the same form as the infinite system of mean--field equations in
\cite{Grosskinsky2019, Schlichting2019,lam2024variational}, with
the important difference that \eqref{eq_ODEs_example} is expressed in a closed form.
While these mean--field equations  correspond to the evolution driven by the inclusion rate $\sim  \eta_{i}
\eta_{j}$ within the fast phase, \eqref{eq_ODEs_example} captures the
interaction between the fast and the slow phase.

\begin{example}\label{exmpl_A1}
Consider the inclusion process generated by \eqref{eq_IP_example} with $ A=1$.
Then the system of ODEs \eqref{eq_ODEs_example} simplifies to 
\begin{equation}\label{e_A1_ode}
\begin{aligned}
\frac{\ud Y_{0}(t)}{\ud t} 
=
 \Theta \,\rho\, \gamma( Y_{0}(t))
 \big( 1  - 
2 Y_{0}(t)\big)\,.
\end{aligned}
\end{equation}
The evolution of the relative mass $ \gamma( t) := \gamma(
Y_{0}(t)) $ in the fast phase can be explicitely expressed, using
 $ \gamma(Y_{0}(t)) = 1- \tfrac{1}{\rho} (1-Y_{0}(t)) $. If $ \gamma(0)=1$,
i.e.~almost all particles start in the fast phase, then
\begin{equation*}
\begin{aligned}
\gamma(t) = 
\frac{1- 2 \rho}{ e^{\Theta t (1-2 \rho)} - 2 \rho} \,.
\end{aligned}
\end{equation*}
Thus, the relative mass in the fast phase equilibrates over time, and evolves at
the same time scale as the PD--diffusion limit.
In particular, we can read off a critical density $ \rho_{c} = \tfrac{1}{2}$ at which
condensation starts to occur, such that 
\begin{equation*}
\begin{aligned}
\lim_{t \to \infty} \gamma(t) = \big(1- \tfrac{1}{2 \rho}\big)_{+} \,.
\end{aligned}
\end{equation*}
Figure~\ref{fig_massevolution} below depicts this non--trivial mass evolution of the
fast phase. 
\end{example}

\begin{figure}[H]
\centering
\captionsetup{width=.8\linewidth}
\begin{tikzpicture}
\begin{axis}[xmin=0, ymin=0, xmax=3,ymax=1, samples=100,width =
0.75\textwidth,height=0.35\textwidth,restrict y to domain=0.0001:1.1,xlabel={$t$},
  ylabel={$\gamma (t)$},ylabel style={rotate=-90}]
  \addplot[purple, ultra thick] (x,0.005);
  	\addplot[black, ultra thick]  {1/(2-(2-1/0.025)* exp(-x))};
	\addplot[black, ultra thick]  {1/(2-(2-1/0.25)* exp(-x))};
	\addplot[black, ultra thick]  {1/(2-(2-1/0.65)* exp(-x))};
	\addplot[black, ultra thick]  {1/(2-(2-1/1)* exp(-x))};
  \addplot[purple,  ultra thick] (x,0.5);
\end{axis}
\end{tikzpicture}
    \caption{
Evolution of $ \gamma(t) = \gamma(Y (t))$ in the case of \eqref{eq_IP_example} with $A=1$,
$\Theta = 1$ and super--critical density $ \rho= 1> \tfrac{1}{2} =\rho_{c}$. 
Note that $ \gamma = 0$ is an equilibrium.
}
\label{fig_massevolution}
\end{figure}
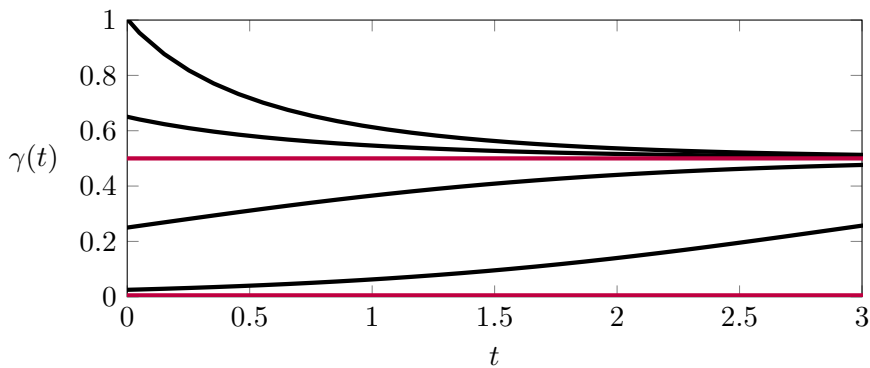

While the infinitesimal evolution of $ Y(t)$ is explicitly given in
terms of the non--linear system \eqref{eq_ODEs_example},
 the exact form of the
solution $ (Y(t))_{t \geqslant 0} $, and therefore $ \gamma(t)$, is not immediate. 
In the following, we will only comment on the long--time behaviour of the limiting process
$(X,Y)$, in particular  $\lim_{t \to
\infty} \gamma (Y (t))$.

First, we point out that the
 stationary states of $\mG$ in \eqref{eq_G_IP} are given by $
S_{Y}^{*} $ \eqref{eq_SyStar} and $ \overline{y}$:
\begin{equation}\label{eq_productmeas}
\begin{aligned}
\overline{y}_{k}:= \frac{1}{Z} \prod_{\ell =1}^{k} \frac{r_{\ell
-1}}{q_{\ell}} \,, \qquad k =0, \ldots, A\,,
\end{aligned}
\end{equation}
where $Z := \sum_{k=0}^{A} \prod_{\ell =1}^{k} \frac{r_{\ell
-1}}{q_{\ell}}$ is a normalising constant.
The coordinate 
 $ \overline{y}$ can be interpreted as a probability counting measure
$ \nu := \sum_{k =0}^{A} \overline{y}_{k} \delta_{k}$, because $ \| \overline{y} \|_{1}=1$.
In the case of our leading example \eqref{eq_IP_example}, the law $ \nu $
is the uniform
distribution on $ \{0, \ldots, A \}$.

In \cite{CGG22}, it was shown that the 
 mean of the probability measure $ \nu$ determines a critical density
threshold
\begin{equation}\label{eq_rhoc}
\begin{aligned}
\rho_{c}:=  \sum_{k =0}^{A} \overline{y}_{k} k =
\mathbf{E}_{\nu}[\mathrm{id}]\,,
\end{aligned}
\end{equation}
above which condensation at stationarity starts to occur.  
More precisely, it was shown that
for $ \eta^{(L,N)}$ distributed according to the unique invariant measure of $
\mathfrak{L}_{L,N}$ 
\begin{equation}\label{eq_stat_PD}
\begin{aligned}
\frac{1}{N} \hat{\eta}^{(L,N)} \weakconv \mathrm{PD}_{[0, \overline{\gamma}]} ( \Theta)\,,
\end{aligned}
\end{equation}
with $ \overline{\gamma}  =\big( 1 - \tfrac{\rho_{c}}{ \rho}\big)_{+}$, see also Example~\ref{exmpl_A1} above.
We expect this
phase transition at $ \rho_{c} $ to persist in the non--equilibrium setting.
Theorem~\ref{thm_main} provides confirmation that this is indeed the case:
\begin{itemize}
\item If $ \rho< \rho_{c}$ then $ \overline{y}$ can never be
attained, as it will otherwise violate conservation of mass. Hence, we
necessarily have $
\lim_{ t \to \infty} \gamma(Y(t))=0$.

\item If $ \rho> A $ then $ S_{Y}^{*} = \emptyset $, because
$\gamma(y) = 1 - \frac{1}{ \rho} \sum_{k =0}^{A}k y_{k} \geqslant 1-
\frac{A}{\rho}  >0$, for all  $y \in S_{Y}$. 

Therefore, the only feasible equilibrium state of $
\mG$ is $
\overline{y}$. 
Consequently, 
\begin{equation}\label{eq_gam_evo_supercrit}
\begin{aligned}
 \lim_{ t \to \infty } \gamma( Y (t)) =  1 - \tfrac{\rho_{c}}{
\rho}= \gamma ( \overline{y}) \,, \quad \text{whenever } \rho> A\,.
\end{aligned}
\end{equation}

\end{itemize}
We are convinced that a detailed analysis of $\mG$, or the associated system of
ODEs, will show that $ \lim_{ t \to \infty} Y(t) = \overline{y}$ whenever $
\rho> \rho_{c}$ and $
\gamma (Y (0)) >0$. In particular, \eqref{eq_gam_evo_supercrit} is expected
to hold for all $ \rho > \rho_{c}$, as seen in Example~\ref{exmpl_A1}.
Finally, we notice that \eqref{eq_gam_evo_supercrit} in combination with
Lemma~\ref{lem_prop} implies
\begin{equation*}
\begin{aligned}
\big(X (t),Y(t)\big) \weakconv \mathrm{PD}_{[0,\gamma( \overline{y})]} ( \theta
( \overline{y}))
\otimes \delta_{ \overline{y}}\,,\quad \text{as}\quad 
t \to \infty\,,
\end{aligned}
\end{equation*}
which again is expected to be true for all $ \rho> \rho_{c}$, provided $
\gamma (Y (0)) >0$.\\

We conclude this section with a discussion of potential generalisations and
future directions regarding Theorem~\ref{thm_main}. 
A natural progression would be to replace the underlying complete graph with an
 sequence of diverging graphs, such as an expanding nearest-neighbour lattice.
 Even for the classical inclusion process \eqref{eq_classic_IP}, the present
setup is unsuitable, as the process loses its Markov property when particle configurations are embedded into $\overline{\nabla}$. 
Moreover, it is known that the random-walk component (in terms of $ \Theta$)
and the inclusion component of the dynamics evolve on different time scales \cite{ACR21}. 
Tackling this question will require more advanced analytical methods.

A possible relaxation of the condition $ \zeta_{L} = \mO ( L^{-1})$ in
Assumption~\ref{ass}
is discussed in Remark~\ref{rem_generalisation_rates}.
Furthermore, it would be of interest to explore inclusion dynamics without the hard cut-off
$A$ of the slow phase, which should result in a doubly infinite-dimensional limiting process $(X,Y)$.
 A comparable setting has been studied in \cite{jat2024conde}, where the
authors established condensation for the stationary measures of
 the zero-range process with a single fast rate $ u_{1}(A_{L}) \sim
L^{\gamma}$, where $ \gamma \in (0, 2] $ and $ A_{L} \to \infty $. 

\subsection*{Overview of the article}
In Section~\ref{sec_mod_PD}, we prove well--posedness of the modulated PD
diffusion as a Feller process and study its invariant measures.
Section~\ref{sec_main} contains the proof of Theorem~\ref{thm_main}, which is
divided into two substeps, Proposition~\ref{prop_occ} and Lemma~\ref{lem_gen_est}.
The content of Section~\ref{sec_occ} is the proof of instantaneous
condensation
(Proposition~\ref{prop_occ}) using an explicit coupling of random walks.
In Section~\ref{sec_conv_gen},
we bound the error of approximating the generator of the modulated PD diffusion
with the one of the
inclusion process (Lemma~\ref{lem_gen_est}).
This error bound vanishes probabilistically by the aforementioned instantaneous
condensation.

\subsection*{Notation}

Whenever clear from context, we write $ X$ for a process $ (X(t))_{t \geqslant
0}$.
Also, we occasionally use little/big--O notation, for example $
o_{f}(1)$ is a vanishing constant (as $\tdlim $), that depends only on the variable
$f$.

\subsection*{Acknowledgments}

The author thanks Paul Chleboun, Stefan Grosskinsky, and Felix Otto for valuable
discussions and comments. 
This project has received funding from the European Research Council (ERC) under the
European Union’s Horizon 2020 research and innovation programme (Grant
agreement No. 101045082).
Moreover, support by the Deutsche Forschungsgemeinschaft (DFG, German Research
Foundation) under Germany's Excellence Strategy EXC 2044–390685587, Mathematics
Münster: Dynamics–Geometry–Structure, is greatly appreciated.

\section{Modulated Poisson--Dirichlet diffusions}\label{sec_mod_PD}

The purpose of this section is to prove
Proposition~\ref{prop_L_generator}, namely that
the closure of the linear operator 
\begin{equation*}
\begin{aligned}
\mL f(x,y) = \mA_{ \gamma(y), \theta(y)}f (\cdot, y )(x) + \mG f(x, \cdot)(y)\,, \quad f \in \mD
\,,
\end{aligned}
\end{equation*}
generates the modulated Poisson--Dirichlet diffusion
$(X,Y)= (X (t), Y(t))_{ t \geqslant 0}$, a Feller diffusion on $ S $. 
Recall that 
\begin{equation}\label{eq_genA}
\begin{aligned}
\mA_{ \gamma(y), \theta(y)} = 
\sum_{i,j =1}^{ \infty} x_{i} (\gamma (y) \, \delta_{i,j} - x_{j} )
\partial_{x_i}\partial_{x_j} 
- \theta (y) \sum_{i =1}^{\infty} x_{i} \partial_{x_i}
\quad \text{and}
\quad 
\mG = \sum_{k =0}^{A-1} b_{k}(y) \partial_{y_{k}}\,.
\end{aligned}
\end{equation}
Moreover, we will prove some properties of the modulated PD diffusion $(X,Y)$ in Section~\ref{sec_props}.
We stress that Proposition~\ref{prop_L_generator} is not a simple consequence
of a perturbation argument, see e.g.~\cite[Chapter~4.10]{EK_book}, which is
restricted to bounded $\mG $.
Instead, our proof shows that the modulated PD diffusion can be
constructed as the limit of finite dimensional Wright--Fisher models. 

Throughout this section, we assume that the conditions in
Assumption~\ref{ass_def_L} are satisfied.

\subsection{Well--posedness of the dynamics}\label{sec_wp_pd}

In the following we prove well--posedness of the martingale
problem for $\mL$:

\begin{proposition}\label{prop_MP_well_posed}
The martingale problem for $ (\mL, \mD)$ is well--posed for any initial
condition in $ S$, and the solution is supported on $ C([0, \infty), S)$. 
\end{proposition}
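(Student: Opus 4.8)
The plan is to establish well-posedness of the martingale problem for $(\mL, \mD)$ in the two standard halves: existence of a solution supported on continuous paths, and uniqueness in law. For existence, I would exploit the rescaling structure already advertised in the excerpt. By Assumption~\ref{ass_def_L}.(c)--(d), on $\overline{\supp \gamma}$ the function $\gamma$ is $C^1$ and the drift $\beta$ of \eqref{eq_def_beta} is Lipschitz and non-negative, so the coordinate change $x = \gamma(y)\bar x$ formally conjugates $\mL$ into $\mA_{1,\beta(y)} + \mG$ acting on $\overline{\nabla}\times S_Y$; this is the content of the forthcoming Lemma~\ref{lem_prop}. Since the classical PD diffusion $\mA_{1,\theta}$ with constant $\theta$ is itself realised as a limit of finite-dimensional Wright--Fisher diffusions (Ethier--Kurtz), I would construct a solution of the martingale problem for $\mL$ as the weak limit of coupled finite-dimensional diffusions: a $K$-type Wright--Fisher diffusion on $\nabla_K$ with mutation parameter $\beta(\cdot)$ modulated by an independent copy of the ODE flow $Y$, all pushed forward through $\gamma(Y(t))$. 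Tightness on $D([0,\infty), S)$ follows from the compactness of $S$ together with an Aldous-type criterion applied to the generators $\varphi_m$ and the monomials $y^{\mathbf n}$, whose images under $\mL$ are uniformly bounded on $S$; continuity of paths of the limit is inherited because each approximating generator is a second-order (diffusion) operator with no jump part, so the limiting martingale problem has no discontinuities. Absorption of $S_Y^*$ under $\mG$, Assumption~\ref{ass_def_L}.(a), guarantees the limit never leaves $S$: once $\gamma(Y(t))=0$ the control is frozen and $x=0$ is forced. This is what Lemma~\ref{lem_mp_exists} will supply.

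For uniqueness, I would set up the moment hierarchy announced as Lemma~\ref{lem_hierarchy}. Apply $\mL$ to the test functions $\varphi_{\mathbf m}(x)\, y^{\mathbf n} := \big(\prod_\ell \varphi_{m_\ell}(x)\big) y^{\mathbf n}$ spanning $\mD$. Because $\mA_{\gamma(y),\theta(y)}$ acting on a product of power sums $\varphi_m$ produces again a finite linear combination of products of power sums of total degree no larger than the input (the quadratic $x_i(\gamma\delta_{ij}-x_j)\partial_{x_i}\partial_{x_j}$ term lowers the "number of particles" bookkeeping, the $-\theta x_i\partial_{x_i}$ term preserves degree, exactly as in \cite{EK81}), and $\mG$ acting on $y^{\mathbf n}$ produces polynomials in $y$ of the same degree, the operator $\mL$ is triangular with respect to a suitable grading of $\mD$ by "total PD-degree". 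Hence for any solution $(X,Y)$ of the martingale problem the functions $t\mapsto \mathbf E[\varphi_{\mathbf m}(X(t))\, Y(t)^{\mathbf n}]$ satisfy a closed, finite-dimensional (at each degree) linear system of ODEs with Lipschitz coefficients — the Lipschitz continuity of $\gamma,\theta,b_k,\beta$ from Assumption~\ref{ass_def_L} is exactly what is needed here — and therefore are uniquely determined by the initial condition via Gronwall. Since $\mD$ is an algebra separating points of the compact metrisable space $S$ and containing constants, it is dense in $C(S)$ by Stone--Weierstrass, so the one-dimensional marginals of any solution are determined; the Markov property from the martingale problem then upgrades this to uniqueness of all finite-dimensional distributions, hence uniqueness in law. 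Finally, existence plus uniqueness of a continuous-path solution to a well-posed martingale problem on a compact state space yields, via the standard Ethier--Kurtz machinery (\cite[Chapter~4]{EK_book}), a strong Markov Feller process, which is the assertion of Proposition~\ref{prop_L_generator}.

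The main obstacle I anticipate is the degeneracy of $\gamma$ at the boundary locus $S_Y^* = \{\gamma = 0\}$: there $\mA_{\gamma(y),\theta(y)}$ collapses, the coordinate change $x=\gamma(y)\bar x$ is singular, and the generator $\mL$ is genuinely non-smooth across $\partial(\supp\gamma)$, so neither the rescaling argument nor a naive perturbation of the PD generator applies directly there. The care needed is (i) to check that the finite-dimensional approximants respect the constraint $\|x\|_1 \le \gamma(y)$ for all times — which is where Assumption~\ref{ass_def_L}.(a) that $S_Y^*$ is absorbing for $\mG$, together with the sign conditions \eqref{e_cond_b1}--\eqref{e_cond_b2} keeping $Y$ in $S_Y$, must be used to prevent the boundary from being crossed from either side — and (ii) to verify that the extension-by-continuity convention for $\mA_{\gamma(y),\theta(y)}f$ on $\partial S$ from \eqref{eq_boundaryS} is consistent with the limit, i.e.\ that the boundary behaviour of the finite Wright--Fisher models matches the prescribed convention in the limit. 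A secondary technical point is confirming that the triangular moment system genuinely closes at each fixed degree despite the $y$-dependence of the coefficients; this is true because the $y$-polynomials multiply but do not raise the $x$-degree, but it needs to be spelled out so that the Gronwall argument applies degree by degree.
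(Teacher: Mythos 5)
Your existence half follows essentially the paper's route (Lemma~\ref{lem_mp_exists}): finite--dimensional Wright--Fisher approximants with mutation rate $\overline{\beta}(Y(t))$, transported through the rescaling $x=\gamma(y)\overline{x}$, with tightness from uniform generator bounds and continuity of paths inherited in the limit, and with Assumption~\ref{ass_def_L}.(a) handling the degenerate set $S_{Y}^{*}$. That part is sound and is what the paper does.

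The gap is in your uniqueness argument. You claim that $\mL$ is triangular on the monomials $\varphi_{\mathbf m}(x)\,y^{\mathbf n}$ and that the quantities $\mathbf{E}\big[\varphi_{\mathbf m}(X(t))\,Y(t)^{\mathbf n}\big]$ satisfy a closed, finite--dimensional linear ODE system at each degree, solvable by Gronwall. Under Assumption~\ref{ass_def_L} the coefficients $\gamma$, $\theta$ and $b_{k}$ are only Lipschitz, not polynomial (in the setting of Theorem~\ref{thm_main}, $\gamma$ even involves a positive part), so $\mA_{\gamma(y),\theta(y)}\big(\varphi_{\mathbf m}\,y^{\mathbf n}\big)$ and $\mG\, y^{\mathbf n}$ produce terms of the form $\gamma(y)\,y^{\mathbf n}\,g^{(\mathbf m)}(x)$, $\theta(y)\,y^{\mathbf n}\varphi_{\mathbf m}(x)$ and $b_{k}(y)\,y^{\mathbf n-e_{k}}\varphi_{\mathbf m}(x)$, whose expectations are \emph{not} linear combinations of the moments in your family; the system therefore does not close, and the Gronwall step has nothing to act on. The missing idea, which is exactly what Lemma~\ref{lem_hierarchy} exploits, is to first observe that the $Y$--marginal of \emph{any} solution of the martingale problem is deterministic: testing against functions of $y$ alone shows that $Y$ solves \eqref{eq_ode}, whose solution is unique by Picard--Lindel\"of because the $b_{k}$ are Lipschitz. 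Once $Y(t)$ is a known deterministic path, $\gamma(Y(t))$ and $\theta(Y(t))$ are just continuous functions of time, and the hierarchy for the $x$--moments $\mathbf{E}[\varphi_{\mathbf m}(X(t))]$ alone does close: the $\mA$--part sends a degree--$n$ monomial to $\gamma(Y(t))$ times a degree--$(n-1)$ element minus a scalar multiple of itself, so induction on the degree plus variation of constants determines all moments, $\mD_{X}$ separates points, and \cite[Theorem~4.4.2(a)]{EK_book} upgrades uniqueness of one--dimensional marginals to uniqueness in law. Without first pinning down $Y$ as the deterministic ODE flow, your moment argument does not go through as written.
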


We note that Proposition~\ref{prop_L_generator} is equivalent to Proposition~\ref{prop_MP_well_posed}.
The proof of Proposition~\ref{prop_MP_well_posed} is split into existence
(Lemma~\ref{lem_mp_exists}) and uniqueness (Lemma~\ref{lem_hierarchy}) of
solutions for the associated
martingale problem. 

\begin{lemma}[Existence]\label{lem_mp_exists}
For every  $(x,y) \in S $, there exists a solution to the martingale problem for $ (\mL,\mD,\delta_{(x,y)})$, with sample paths in $ C ([0, \infty), S)$.
\end{lemma}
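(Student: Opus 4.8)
The plan is to construct a solution to the martingale problem for $(\mL, \mD)$ by approximation with finite-dimensional diffusions, exploiting the structure $\mL = \mA_{\gamma(y),\theta(y)} + \mG$ together with the rescaling identity $X(t) = \gamma(Y(t))\overline{X}(t)$ hinted at in Lemma~\ref{lem_prop}. First I would treat the $Y$-component as given: since the $b_k$ are Lipschitz on $S_Y$ and satisfy the boundary conditions \eqref{e_cond_b1}--\eqref{e_cond_b2}, the ODE \eqref{eq_ode} has a unique global solution $(Y(t))_{t\geqslant 0}$ in $S_Y$ starting from $y$, and this trivially solves the martingale problem for $(\mG, \mD_Y, \delta_y)$ with continuous (indeed $C^1$) paths. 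The real content is building $X$ on top of $Y$. Here I would use the classical Ethier--Kurtz construction of the PD diffusion $\overline{X}$ as the limit (in $D([0,\infty),\overline{\nabla})$) of ordered Wright--Fisher diffusions on the finite simplex, but now with the time-dependent mutation/drift parameter $\overline{\beta}(Y(t))$ (cf.\ \eqref{eq_def_beta}), which is Lipschitz and non-negative by Assumption~\ref{ass_def_L}(d). Concretely, for each $d\in\NN$ let $Z^{(d)}(t)$ solve the $d$-type Wright--Fisher SDE with time-inhomogeneous mutation coefficient $\overline{\beta}(Y(t))/d$ per type; standard existence for SDEs with Lipschitz-in-space, continuous-in-time coefficients on the simplex gives these processes, and their laws are tight in $D([0,\infty),\overline{\nabla})$ by the Ethier--Kurtz moment/compact-containment estimates (which are uniform in $d$ and depend on $\overline\beta$ only through its bound).

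Next I would pass to a subsequential limit $\overline{X}$ of the reordered processes $\widehat{Z}^{(d)}$, and identify it: applying Itô to the test functions $\varphi_m$ and products thereof (the generators of $\mD_X$), the finite-$d$ generators converge, uniformly in the relevant sense, to $\mA_{1,\overline{\beta}(Y(t))}$ acting on $\mD_X$, with an error that is $O(1/d)$ as in \cite{EK81}. This shows any limit point solves the time-inhomogeneous martingale problem: $f(\overline{X}(t)) - f(\overline{X}(0)) - \int_0^t \mA_{1,\overline{\beta}(Y(s))} f(\overline{X}(s))\,\ud s$ is a martingale for $f\in\mD_X$. Then I set $X(t) := \gamma(Y(t))\,\overline{X}(t)$. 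A direct computation — using $\frac{\ud}{\ud t}\gamma(Y(t)) = \sum_k b_k(Y(t))\partial_{y_k}\gamma(Y(t))$ on $\supp\gamma$, the chain rule, the quadratic scaling of the diffusion part of $\mA$ and the linear scaling of its drift part, and the definition \eqref{eq_def_beta} of $\beta$ — shows that for $f\in\mD$ the process $f(X(t),Y(t)) - f(X(0),Y(0)) - \int_0^t \mL f(X(s),Y(s))\,\ud s$ is a martingale; the $\theta$ in \eqref{eq_def_beta} is exactly what is needed so that the logarithmic-derivative term coming from the time-dependent scaling $\gamma(Y(t))$ recombines into the drift $-\theta(y)\sum_i x_i\partial_{x_i}$. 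Finally, continuity of the paths of $(X,Y)$ follows because $\overline X$ has continuous paths (PD diffusion), $Y$ is $C^1$, and $\gamma$ is continuous, and the constraint $\sum_i x_i \leqslant \gamma(y)$ holds by construction; on $S_Y^*$ (where $\gamma = 0$) Assumption~\ref{ass_def_L}(a) guarantees $Y$ is frozen and $X\equiv 0$, so the representation extends consistently to all of $S$.

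The main obstacle I anticipate is the book-keeping at the boundary $S_Y^* = \{\gamma(y)=0\}$ and the matching of the time-dependent rescaling with the drift term. Away from $S_Y^*$ everything is smooth: $\gamma\in C^1(\overline{\supp\gamma})$ by Assumption~\ref{ass_def_L}(c), so the chain-rule computation producing $\beta$ is legitimate and $\beta$ is Lipschitz and non-negative, which is precisely the input the Ethier--Kurtz estimates need for the inhomogeneous Wright--Fisher approximation to stay tight with limit the PD diffusion of (time-varying) parameter $\overline\beta(Y(t))$. The delicate point is that the candidate generator $\mL f$ involves $\mA_{\gamma(y),\theta(y)}$ evaluated by continuity off $\partial S$, and one must check that the martingale identity for $f\in\mD$ survives the multiplication $X = \gamma(Y)\overline X$ including on the event that $Y(t)$ reaches $S_Y^*$ — but by part (a) the set $S_Y^*$ is absorbing for $\mG$ and only reachable in the limit $t\to\infty$ (or from the start), so on any finite time interval $\gamma(Y(t))$ is either identically zero or strictly positive, and in the latter case the computation is clean. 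I would also need a short compact-containment argument for the joint process, which is immediate since $S$ is compact. Overall the structure mirrors \cite{EK81} with the added ingredient of a Lipschitz, slowly-varying (deterministic) parameter, and I would be careful to state the time-inhomogeneous Wright--Fisher tightness lemma explicitly before invoking it.
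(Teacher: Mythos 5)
Your construction is correct in substance and rests on the same two pillars as the paper's proof — finite-dimensional Wright--Fisher approximation plus the rescaling $X=\gamma(Y)\overline X$, with the definition \eqref{eq_def_beta} of $\beta$ providing exactly the cancellation that turns the logarithmic-derivative term into the drift $-\theta(y)\sum_i x_i\partial_{x_i}$ — but you order the steps differently, and the difference is not cosmetic. The paper never constructs a time-inhomogeneous PD diffusion: it keeps $Y$ as a coordinate of a \emph{time-homogeneous} two-loci Wright--Fisher diffusion $(Z^{(M)},Y)$ generated by $\mathtt A^{(M)}_{1,\beta}+\mG$ (existence from the appendix, citing Ethier's multi-loci result), applies the map $\proj(z,y)=(\gamma(y)\widehat{\overline z},y)$ at the finite level, proves uniform generator convergence $\mathtt L^{(M)}(f\circ\proj)\to\mL f\circ\proj$ (Lemma~\ref{lem_rescaling}), and then takes $M\to\infty$ via the standard tightness/identification machinery of Ethier--Kurtz; the rescaling identity $X=\gamma(Y)\overline X$ only appears afterwards, as a property (Lemma~\ref{lem_prop}(i)). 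You instead take the limit first, building a PD diffusion with \emph{time-dependent} parameter $\overline\beta(Y(t))$, and rescale at the end. That route works, but it forces you to supply two ingredients the paper avoids: a time-inhomogeneous version of the EK81 Wright--Fisher-to-PD convergence (EK81 is time-homogeneous; with $\overline\beta(Y(\cdot))$ continuous and bounded this is a routine but genuinely new lemma, as you note), and the time-dependent test-function extension of the martingale problem so that $f(\gamma(Y(t))\overline X(t),Y(t))$ — whose $t$-dependence enters both through $\gamma(Y(t))$ and through the $y$-argument of $f$ — can be handled; since $f\in\mD$ this function is a polynomial in the $\varphi_m(\overline x)$ with $C^1$ coefficients in $t$, so the extension is standard. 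Your boundary discussion is sound: since every point of $S_Y^*$ is an equilibrium of the Lipschitz ODE (Assumption~\ref{ass_def_L}(a)), backward uniqueness shows $\gamma(Y(t))>0$ for all finite $t$ whenever $\gamma(Y(0))>0$, and the degenerate case $\gamma(Y(0))=0$ gives the frozen solution, matching the paper's treatment in Remark~\ref{rem_notindomain}. In short: both arguments buy the same result; the paper's rescale-then-limit order lets it lean entirely on existing time-homogeneous finite-dimensional theory, while your limit-then-rescale order is conceptually cleaner about what the intermediate object is (a PD diffusion driven by a slowly varying parameter) at the price of proving the inhomogeneous approximation lemma yourself.
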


\noindent
The idea is to approximate solutions of martingale problems for $ \mL$ by 
finite-dimensional approximations
with state spaces 
\begin{equation*}
\begin{aligned}
S_{M}:= 
\bigg\{ (x,y) \in S\,:\, 
\sum_{i =1}^{ \infty}x_{i} = \gamma(y)  \,: \, x_{i}= 0 \ \text{for all }  i >M
\bigg\}\,.
\end{aligned}
\end{equation*} 
For this purpose, we introduce multi-loci Wright--Fisher diffusions $(
Z^{(M)} (t),
Y(t))_{t \geqslant 0}$ with state
spaces $K_{M-1}\times S_{Y}$,
\begin{equation*}
\begin{aligned}
K_{M-1}:= \bigg\{x \in [0, 1]^d\,:\, \sum_{i=1}^{M-1} x_{k} \leqslant 1
\bigg\}\,,
\end{aligned}
\end{equation*}
associated to the linear operator $\mathtt{L}^{(M)} =
\mathtt{A}^{(M)}_{ 1,\beta} + \mG$.
Here, for $ g \in C^{2}( K_{M-1}\times S_{Y})$
\begin{equation}\label{eq_def_AM_beta}
\begin{aligned}
\mathtt{A}^{(M)}_{1, \beta} g ( z,y) := 
\sum_{i,j =1}^{M-1} 
z_{i} ( \delta_{i,j} - z_{j} )
\partial_{z_{i} , z_{j} }^{2}g(z,y)
+ 
\overline{\beta}(y)
\sum_{i =1}^{M-1} \big( \tfrac{1 }{M-1} (1- z_{i} ) -  z_{i} \big)
\partial_{ z_{i}}g(z,y)
\,,
\end{aligned}
\end{equation} 
where $ \overline{\beta} \in C ( S_{Y}, [0, \infty))$ is a continuous
extensions of $ \beta$ \eqref{eq_def_beta}, cf. Assumption~\ref{ass_def_L}.
The martingale problem associated to $\mathtt{L}^{(M)} $
admits solutions with continuous sample paths, which is established in
Appendix~\ref{sec_app_WF}. 
In the following, $( Z^{(M)} (t),
Y(t))_{t \geqslant 0}$ will denote such a diffusion process.\footnote{Uniqueness of solutions to the martingale
problem is not required in this step. However, as elaborated in the appendix,
it is expected to hold.
}  
\\

Before we state the proof of Lemma~\ref{lem_mp_exists}, we first present a
simple consequence of rescaling the involved generators, using the map 
\begin{equation}\label{eq_def_Pi}
\begin{aligned}
\proj( z, y) := ( \gamma (y) \widehat{ \overline{z}}, y)\,,
\qquad \text{where } \overline{z} := \Big( z_{1}, \ldots, z_{M-1}, 1-
\sum_{i =1}^{M-1} z_{i}\Big) \,,
\end{aligned}
\end{equation}
and $ \widehat{\cdot}$ denotes the descending reordering. 

\begin{lemma}\label{lem_rescaling}
Let $M \in \mathbf{N}$.
Then for every $ f \in \mD$
\begin{equation}\label{e_gen_approx}
\begin{aligned}
\lim_{M \to \infty} 
\sup_{ \substack{ (z,y) \in K_{M-1}\times S_{Y} }} 
\big|\mathtt{L}^{(M)} \big(f \circ
\proj\big) (z,y) - \mL f \big( \proj (z,y)\big)\big|
=0\,.
\end{aligned}
\end{equation}

\end{lemma}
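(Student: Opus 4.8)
The strategy is a direct computation: apply the two operators $\mathtt{L}^{(M)}$ and $\mL$ to the relevant functions, write out the difference, and show that every term is $\mathcal{O}(1/M)$ uniformly on $K_{M-1}\times S_Y$. Since $\mD$ is spanned by products $f\cdot g$ with $f\in\mD_X$, $g\in\mD_Y$, and both generators act as second-order differential operators, it suffices by bilinearity and the product rule to understand how $f\circ\proj$ decomposes. The key observation is that $\proj(z,y)=(\gamma(y)\,\widehat{\overline z},y)$, so for a generator of $\mD_X$ of the form $\varphi_m(x)=\sum_i x_i^m$ we get $\varphi_m(\gamma(y)\widehat{\overline z})=\gamma(y)^m\sum_{i=1}^{M}\overline z_i^{\,m}$, which is a symmetric polynomial in $z_1,\dots,z_{M-1}$ (the reordering $\widehat{\cdot}$ is irrelevant for symmetric functions) times $\gamma(y)^m$. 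Thus $f\circ\proj$ is, up to the $\gamma(y)$-powers, a smooth symmetric function of $z$, and one can differentiate it explicitly in both the $z$- and $y$-variables.

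First I would reduce to the case $f=\varphi_{m_1}\cdots\varphi_{m_r}\cdot g$ and, in fact, by the standard argument used for the classical PD diffusion (Ethier--Kurtz), to monitoring finitely many of the $\varphi_m$'s at a time. Then I would compute $\mathtt{A}^{(M)}_{1,\beta}(f\circ\proj)$: the diffusion part $\sum_{i,j}z_i(\delta_{ij}-z_j)\partial_{z_i}\partial_{z_j}$ applied to a symmetric polynomial in $z$ reproduces — by the classical Wright--Fisher-to-PD computation — the Kingman-simplex diffusion operator $\sum_{i,j}x_i(\delta_{ij}-x_j)\partial_{x_i}\partial_{x_j}$ acting on $x=\widehat{\overline z}$ (here one uses that $\overline z$ has $M$ coordinates summing to $1$, so this is exactly the finite-$M$ truncation that converges to the infinite operator), and the drift part with coefficient $\overline\beta(y)$ and the mean-field shift $\frac1{M-1}(1-z_i)-z_i$ reproduces $-\beta(y)\sum_i x_i\partial_{x_i}$ in the limit, with the $\frac1{M-1}$ correction contributing an $\mathcal{O}(1/M)$ error. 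Next, $\mG(f\circ\proj)=\sum_k b_k(y)\partial_{y_k}(f\circ\proj)$ produces two kinds of terms: those where $\partial_{y_k}$ hits a $\gamma(y)$-power, which generate exactly the extra drift $\sum_k b_k(y)\frac{\partial_{y_k}\gamma(y)}{\gamma(y)}\cdot(\text{Euler operator in }x)$ matching the $\beta$ defined in \eqref{eq_def_beta}, and those where $\partial_{y_k}$ hits a factor of $g$, which reproduce $\mG f$ directly. Assembling, the $\gamma$-derivative terms from $\mG$ combine with the $\overline\beta$ drift from $\mathtt{A}^{(M)}$ to give precisely $-\theta(y)\sum_i x_i\partial_{x_i}$ plus the genuine $\mG f$ piece, which is $\mL f\circ\proj$, up to $\mathcal{O}(1/M)$.

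The main obstacle I anticipate is bookkeeping around the boundary where $\gamma(y)=0$: there $\proj$ collapses the $x$-component to $0$, the factor $\frac{\partial_{y_k}\gamma}{\gamma}$ in \eqref{eq_def_beta} is a priori singular, and one must check the error terms remain uniformly bounded. This is handled by Assumption~\ref{ass_def_L}: part (a) forces $b_k(y)=0$ on $S_Y^*=\{\gamma=0\}$, so the problematic terms $b_k(y)\partial_{y_k}\gamma(y)^m=m\,b_k(y)\gamma(y)^{m-1}\partial_{y_k}\gamma(y)$ vanish there (and for $m\ge1$ are continuous up to the boundary by part (c)), and part (d) guarantees $\beta$ — hence the combined drift coefficient — extends continuously and boundedly. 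A secondary technical point is that the uniform supremum over $K_{M-1}\times S_Y$ requires all coefficients ($b_k$, $\gamma$ and its derivative, $\theta$, $\overline\beta$) to be bounded, which holds since $S_Y$ is compact and all are continuous by Assumption~\ref{ass_def_L}; the $z$-polynomials and their derivatives are bounded on the compact $K_{M-1}$ uniformly in $M$ because $0\le z_i\le1$ and $\sum_i z_i\le1$. Once these uniform bounds are in place, the only $M$-dependence sits in the explicit $\frac1{M-1}$ factors and in the tail $\sum_{i>M}x_i^m$-type truncation errors, both $\mathcal{O}(1/M)$, giving \eqref{e_gen_approx}.
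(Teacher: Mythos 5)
Your proposal is correct and follows essentially the same route as the paper: compute $\mathtt{A}^{(M)}_{1,\beta}(f\circ\proj)$ and $\mG(f\circ\proj)$ explicitly, observe that the $\big(\beta(y)-\theta(y)\big)\sum_i x_i\partial_{x_i}f$ terms cancel (this is exactly how $\beta$ in \eqref{eq_def_beta} is designed), handle $\gamma(y)=0$ via Assumption~\ref{ass_def_L}.(a), and bound the remaining error by the explicit $\tfrac{\|\beta\|_\infty}{M-1}\sup\sum_i|\partial_{x_i}f|$ term. The only small inaccuracy is your mention of ``tail $\sum_{i>M}x_i^m$ truncation errors'': since $\proj(z,y)$ has at most $M$ nonzero $x$-coordinates and every term of $\mA_{\gamma,\theta}$ carries a factor $x_i$, the matching is exact apart from the $\tfrac{1}{M-1}$ drift correction, so no such truncation error arises.
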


\begin{proof}
Let $ (z,y) \in K_{M-1}\times S_{Y}$ such that $ \gamma(y)>0$, and write $(x,y) := \proj (z,y)$.
An explicit calculation shows that
\begin{equation}\label{e_rescale_A}
\begin{aligned}
&\mathtt{A}_{1, \beta}^{(M)} 
\big(f \circ \proj ( \cdot, y) \big)
(z) \\
&=
\sum_{i,j =1}^{M} 
x_{i} ( \gamma(y) \delta_{i,j} -  x_{j})
\partial_{x_{i} , x_{j} }^{2}
f( x,y)
+ \overline{\beta} (y) \sum_{i =1}^{M} 
 \Big( \tfrac{1}{M-1} \big( \gamma(y)- x_{i}  \big)- x_{i}\Big)
  \partial_{ x_{i}} 
f( x,y)\\
&= 
\mA^{(M)}_{ \gamma (y),\theta(y)} f (\cdot,y) (x)
+
\sum_{i =1}^{M}
 \Big( \tfrac{ \beta(y)}{M-1} \big( \gamma(y)- x_{i}  \big)- 
\big( \beta(y) - \theta(y)\big)x_{i}\Big)
 \partial_{ x_{i}} 
f( x,y)\,,
\end{aligned}
\end{equation}
where in the first equality we rearranged the summands after noticing that 
\begin{equation}\label{eq_supp1_fd}
\begin{aligned}
\partial_{z_{i}} (
f \circ \proj) (z,y) = \gamma(y) \big( \partial_{x_{i}}f (\proj (z,y)) -
\partial_{x_{M}} f (\proj (z,y)) \big)\,, \quad i =1, \ldots, M-1\,,
\end{aligned}
\end{equation}
because $ f\circ \proj$ is of the form
\begin{equation*}
\begin{aligned}
(f \circ \proj)(z,y)
=
f\big( \gamma(y) \big( z_{1}, \ldots,  z_{M-1}, 1-
(z_{1}+ \cdots + z_{M-1})\big),y\big)\,.
\end{aligned}
\end{equation*}
We also used the fact that $ f \in \mD $ is invariant under reordering of the $
x$--entries.
Similarly, for the generator of the control process
\begin{equation}\label{e_rescale_G}
\begin{aligned}
\mG \big( f \circ \proj (z, \cdot)\big)(y)
&= \mG \big( f ( \gamma(\cdot)  \overline{z}, \cdot)
\big) (y)\\
&= 
\sum_{k =0}^{A-1} b_{k}(y)\bigg( \partial_{y_{k}}f (x,y)
+
 \sum_{i =1}^{M} \partial_{x_{i}}f (x,y)
 x_{i} \frac{\partial_{y_{k}} \gamma (y)}{ \gamma(y)} \bigg)\\
& = \mG f ( x,\cdot )(y) + 
\big( \beta(y) - \theta(y)\big)
\sum_{i =1}^{M} x_{i} \partial_{x_{i}} f (x,y)
\,.
\end{aligned}
\end{equation}
On the other hand, if $ \gamma(y)=0$ then $ \proj (z,y) =
(0,y)$ such that 
\begin{equation*}
\begin{aligned}
\mathtt{L}^{(M)} (f \circ \proj )(z,y) =0 = \mL f (0,y)\,,
\end{aligned}
\end{equation*}
where we used Assumption~\ref{ass_def_L}.(a) to conclude $ \mG f (0,\cdot)(y)=0$.

 Using \eqref{e_rescale_A} and \eqref{e_rescale_G}, we establish that for every $ f \in \mD $
and $(z^{(M)},y) \in K_{M-1}\times S_{Y} $, uniformly
\begin{equation}
\begin{aligned}
\big|\mathtt{L}^{(M)} \big(f \circ
\proj\big) (z^{(M)},y) - \mL f \big( \proj (z^{(M)},y)\big)\big|
\leqslant 
\frac{
\| \beta \|_{\infty} 
}{M-1}  
\sup_{(x^,y) \in S } \sum_{i = 0}^{\infty}|\partial_{x_{i}} f (x,y)
|\,.
\end{aligned}
\end{equation}
Note that the right--hand side of \eqref{e_gen_approx} vanishes as $M \to \infty $, since
\begin{equation*}
\begin{aligned}
\sup_{(x,y) \in S} 
\sum_{i = 0}^{\infty}|\partial_{x_{i}} f (x,y) | < \infty \,, \quad \text{ for
every } f
\in \mD \,,
\end{aligned}
\end{equation*}
and $ \| \beta\|_{\infty}< \infty$ due to Assumption~\ref{ass_def_L}.(d).
Lastly, taking the limit $M \to \infty$ concludes the proof.
\end{proof}

We are now ready to prove that every limit point of $ \proj ( Z^{(M)}, Y)$ solves the martingale problem for
$(\mL,\mD)$.
The proof is elementary and follows closely the proof of Lemma~5.1 of Chapter~4 in 
\cite{EK_book}, up to the above embedding. We include the key steps for
completeness.

\begin{proof}[Proof of Lemma~\ref{lem_mp_exists}]
Let  $(x,y) \in S $, then there exists a sequence $ (z^{(M)}, y) \in
K_{M-1} \times S_{Y}$ such that $ \proj (z^{(M)}, y) \to (x,y) $.
Denote by 
\begin{equation*}
\begin{aligned}
(X^{(M)} (t) , Y(t) )_{t \geqslant 0}
= \proj \big(
Z^{(M)} (t), Y (t)
\big)_{t \geqslant 0}
\end{aligned}
\end{equation*}
a projected solution of the martingale problem for $ \mathtt{L}^{(M)}$ with initial condition
$(z^{(M)}, y)$.
The remainder of the proof is to verify that $(X^{(M)} , Y)_{M \in \mathbf{N}}$
is tight in $ D([0, \infty),S)$, therefore in $ C([0, \infty),S)$, and that every accumulation point solves
the martingale problem associated to $ (\mL,
\delta_{(x,y)})$.\\

\noindent\textit{Tightness:}
Because $(Z^{(M)} , Y)$ solves the martingale problem for $( \mathtt{L}^{(M)},
\delta_{(z^{(M)}, y)})$, $M \in \mathbf{N}$, tightness follows from \cite[Theorem 3.9.4]{EK_book}
and \eqref{e_gen_approx}.
See also Remark~5.2 of Chapter~4 in the same reference.
Consequently, $ ( X^{(M)}, Y)$ is tight because $ \proj $ is a continuous map.
\\

\noindent\textit{Martingale property of accumulation points:}
Let $(X,Y)$ be an accumulation point, and $\proj(Z^{(M_{n})} , Y)$ a subsequence
converging (weakly) to it.
We know that 
\begin{equation*}
\begin{aligned}
t \mapsto f \circ \proj \big(Z^{(M_{n})}(t), Y (t)\big) 
- \int_{0}^{t} \mathtt{L}^{(M_{n})} \big(f \circ \proj \big)
\big(Z^{(M_{n})}(s), Y(s)\big)  \ud s
\end{aligned}
\end{equation*}
is a martingale, and that $ \mathtt{L}^{(M_{n})} (f\circ \proj) \big(Z^{(M_{n})}(s), Y(s)\big) $ in the integral above can
be replaced by $\mL f \big(X^{(M_{n})}(s), Y(s)\big)$ when $M_{n}\to \infty$,
using Lemma~\ref{lem_rescaling}.
This puts all the $M_{n}$-dependency on $
X^{(M_{n})}$ only.
 Since $(X,Y)$ is supported on  $ C([0, \infty),S)$,
both $f (\bullet (t))$ and $ \int_{0}^{t}\mL f(\bullet
(s)) \ud s $ are continuous.
Therefore, the
above expression converges weakly to 
\begin{equation}\label{eq_limit_mart}
\begin{aligned}
t \mapsto f (X(t), Y (t)) 
- \int_{0}^{t} \mL f (X(s), Y(s))  \ud s\,,
\end{aligned}
\end{equation}
for every such $t$, 
which is also a martingale.
Because this follows verbatim from Lemma~5.1 Chapter 4 \cite{EK_book},
we refrain from giving superfluous details here.
Overall, we conclude that $ (X,Y)$ is a solution to the martingale problem for  $ (\mL,
\mD,\delta_{(x,y)})$.
\end{proof}

\begin{remark}\label{rem_notindomain}
The attentive reader may have spotted that we applied $ \mathtt{L}^{(M)}$ to $
f\circ \proj $, in Lemmas~\ref{lem_mp_exists} and \ref{lem_rescaling}.
However, it is possible that  $ f\circ \proj \notin C^{2} ( K_{M-1}\times
S_{Y})$, because we  assumed $ \gamma $ to be differentiable only in $
\overline{\supp \gamma }$.

We ommited this complication, because $ \mathtt{L}^{(M)}$ acts locally and vanishes on
$ S_{Y}^{*}$ \eqref{eq_SyStar}, leading to absorbing boundary conditions. 
Instead, we could have introduced stopping times 
\begin{equation*}
\begin{aligned}
T := \inf \{t \in [0 , \infty)\,: \, \gamma(Y(t)) =0 \}\,,
\end{aligned}
\end{equation*}
until which the martingale problem for $ f \circ \proj $ is well--defined. 
Because $\proj ( \cdot, Y (t)) =(0, Y (T))$ for all $ t \geqslant T$, it is
then possible to
extend the solution from $[0,T]$ to the full time horizon.
\end{remark}

\begin{lemma}[Uniqueness]\label{lem_hierarchy}
Let $ (x,y) \in S$.
There exists a unique family of probability measures $( \mu_{t})_{t \geqslant 0}
\subset \mM_{1}( \overline{\nabla})$ such that every
solution $ (X,Y) \in D([0, \infty),
S) $ to the martingale problem
$(\mL ,\mD, \delta_{(x,y)})$ satisfies
\begin{equation*}
\begin{aligned}
\mathrm{Law}(X (t)) = \mu_{t}\,, \quad \forall t \geqslant 0 \,.
\end{aligned}
\end{equation*}
In particular, the law of $ (X(t),Y(t))$ is uniquely determined.
\end{lemma}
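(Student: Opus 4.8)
\emph{Strategy and Step~1 (the control is deterministic).}
Since $\mD=\mathrm{span}\{(x,y)\mapsto\varphi_{\mathbf m}(x)\,y^{\mathbf n}\}$, where $\varphi_{\mathbf m}:=\prod_{\ell}\varphi_{m_\ell}$ runs over products of the power sums $\varphi_m$, $m\geqslant2$ (with $\varphi_{\emptyset}:=1$), and $y^{\mathbf n}$ over monomials, my plan is to pin down, for an arbitrary solution $(X,Y)$ of the martingale problem, first the law of $Y(t)$ and then all moments $u_{\mathbf m}(t):=\EE[\varphi_{\mathbf m}(X(t))]$. For the first, test the martingale problem against $f=1\otimes g$ with $g\in\mD_Y$: since $\mA_{\gamma(y),\theta(y)}$ differentiates only in the $x$-variables, $\mL(1\otimes g)=\mG g$, so $(Y(t))_{t\geqslant0}$ solves the martingale problem for $(\mG,\mD_Y,\delta_y)$. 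As $\mG$ is a first--order operator with Lipschitz coefficients $b_k$ satisfying the inward boundary conditions \eqref{e_cond_b1}--\eqref{e_cond_b2}, this martingale problem is well--posed with unique solution the deterministic Picard--Lindel\"of flow $Y(t)=y(t)$ of \eqref{eq_ode}: existence is clear because a deterministic path has $\tfrac{\mathrm d}{\mathrm dt}g(y(t))=\mG g(y(t))$; uniqueness follows since $\mathrm{Law}(Y(t))$ is then a measure--valued weak solution of the continuity equation $\partial_t\mu_t+\div(b\,\mu_t)=0$ with $\mu_0=\delta_y$, which for Lipschitz $b$ admits the unique solution $\mu_t=\delta_{y(t)}$ (equivalently, $\mD_Y$ is a core for the generator of the Feller flow semigroup $g\mapsto g\circ\Phi_t$ on $C(S_Y)$), whence $Y(t)=y(t)$ for all $t$ by right--continuity of paths. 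A solution with this $Y$--marginal is in any case already furnished by Lemma~\ref{lem_mp_exists}.

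\emph{Step~2 (the moment hierarchy).}
A direct computation using $\partial_{x_i}\varphi_m=mx_i^{m-1}$, $\partial_{x_i}\partial_{x_j}\varphi_m=m(m-1)x_i^{m-2}\delta_{ij}$ and the product rule, followed by the convention--dictated evaluation on $\partial S$ (where $\varphi_1(x)=\|x\|_1=\gamma(y)$) and continuous extension to $S$, yields
\[
\mA_{\gamma(y),\theta(y)}\varphi_{\mathbf m}=-\,|\mathbf m|\big(|\mathbf m|-1+\theta(y)\big)\,\varphi_{\mathbf m}+\sum_{|\mathbf m'|\leqslant|\mathbf m|-1}c_{\mathbf m,\mathbf m'}(y)\,\varphi_{\mathbf m'},
\]
with $|\mathbf m|:=\sum_\ell m_\ell$ and coefficients $c_{\mathbf m,\mathbf m'}\in C(S_Y)$ that are polynomials in $\gamma(y)$ and $\theta(y)$ (the powers of $\gamma(y)$ arising precisely from the substitution $\varphi_1\mapsto\gamma(y)$ on $\partial S$); in particular $\mA_{\gamma,\theta}$ maps $\varphi$--degree $n$ into $\varphi$--degree $\leqslant n$. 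Inserting $Y(s)=y(s)$ from Step~1 into the martingale relation for $\varphi_{\mathbf m}(X(t))-\int_0^t\mA_{\gamma(Y(s)),\theta(Y(s))}\varphi_{\mathbf m}(X(s))\,\mathrm ds$ and taking expectations, one obtains a linear integral equation for $u_{\mathbf m}(t)$, with bounded continuous (deterministic) coefficients, in which $u_{\mathbf m}$ is coupled only to itself and to the $u_{\mathbf m'}$ with $|\mathbf m'|<|\mathbf m|$, and with initial value $u_{\mathbf m}(0)=\varphi_{\mathbf m}(x)$.

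\emph{Step~3 (conclusion).}
Solve the hierarchy by induction on the degree $n=|\mathbf m|$: for $n=0$ one has $u_{\emptyset}\equiv1$, and for each $n\geqslant1$ the finitely many $u_{\mathbf m}$ with $|\mathbf m|=n$ form a decoupled linear ODE system whose inhomogeneity involves only already--determined lower--degree moments, hence is unique by Gr\"onwall's inequality. Thus every $u_{\mathbf m}(t)$ depends only on $(x,y)$ and $t$, not on the particular solution. Since $\mD_X=\mathrm{span}\{\varphi_{\mathbf m}\}$ contains the constants and separates points of the compact metrisable space $\overline{\nabla}$ (the power sums $\varphi_m$ determine any element of $\overline{\nabla}$), it is dense in $C(\overline{\nabla})$ by Stone--Weierstrass, as in \cite{EK81}; therefore the common values $\EE[\varphi_{\mathbf m}(X(t))]$ determine a unique Borel probability measure $\mu_t:=\mathrm{Law}(X(t))$ on $\overline{\nabla}$, the same for every solution. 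Combined with the determinism $Y(t)=y(t)$, this gives $\mathrm{Law}(X(t),Y(t))=\mu_t\otimes\delta_{y(t)}$, which is the assertion.

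\emph{Main obstacle.}
The one step that is not routine bookkeeping is the well--posedness invoked in Step~1: establishing that the Lipschitz vector field $\mG$, with the stated inward boundary behaviour, has a well--posed martingale problem on the polynomial domain $\mD_Y$ (equivalently, that $\mD_Y$ is a core, or that the continuity equation with Lipschitz drift on the simplex $S_Y$ has a unique measure solution). Everything else --- the degree--lowering identity for $\mA_{\gamma,\theta}$ on power--sum monomials and the induction on degree --- parallels the classical argument of \cite{EK81}; the only care needed there is tracking which $\varphi_{\mathbf m'}$, and which powers of $\gamma(y)$ coming from $\varphi_1\mapsto\gamma(y)$ on $\partial S$, appear at each level.
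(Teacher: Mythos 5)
Your proposal is correct and follows essentially the same route as the paper: the action of $\mA_{\gamma,\theta}$ on power--sum monomials lowers the degree up to the diagonal term $-|\mathbf{m}|(|\mathbf{m}|-1+\theta(y))\varphi_{\mathbf{m}}$ (with $\varphi_1\mapsto\gamma(y)$ on $\partial S$), so the moments satisfy a triangular linear hierarchy with deterministic, continuous coefficients that is solved uniquely by induction on the degree, and since $\mD_{X}$ separates points of $\overline{\nabla}$ this determines $\mu_t$ and hence the law of $(X(t),Y(t))$. The only difference is that the paper simply asserts that the $Y$--marginal of any solution is the deterministic Picard--Lindel\"of flow (``the unique process associated to $\mG$''), whereas your Step~1 supplies an argument for this; that addition is consistent with the paper and closes exactly the step you flagged as the main obstacle.
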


To prove the lemma, we will exploit the explicit form of the generator
$\mA_{\gamma, \theta}$, leading to a hierarchy of differential equations,
that represents the evolution of moments. To this end, let us introduce for all $ n \in \mathbf{N}$
the subspaces
\begin{equation*}
\begin{aligned}
\mD_{X , n}:= \mathrm{span}\Big\{ \varphi_{\bf m} = \varphi_{m_{1}}\cdots
\varphi_{m_{k}} \,,\ m_{1} + \cdots +m_{k} \leqslant n \Big\}
\,,
\end{aligned}
\end{equation*}
of ``polynomials''  on $ \overline{\nabla}$ of at most degree $n$.

\begin{proof}
Let $ (x,y) \in S$ and $ (X(t) , Y (t))_{t
\geqslant 0}$ be a solution of the martingale problem for $(\mL ,
\mD,\delta_{(x,y)})$, possibly with sample paths in $
D([0 , \infty), S)$.
 Then, for every $ f \in \mD_{X,n}$, $n \in \mathbf{N}$, we have
\begin{equation*}
\begin{aligned}
 \mathbf{E} [ f (X(t)) ]
= f(x)+
\int_{0}^{t} \mathbf{E} [\mA_{ \gamma(Y (t)), \theta( Y (t))} f(X (t))] \ud s\,.
\end{aligned}
\end{equation*}
Note that on each ``monomial'' $ \varphi_{ \bf m}$, the operator $\mA$ acts as 
\begin{equation}\label{e_A_to_poly}
\begin{aligned}
\mA_{ \gamma( y) , \theta(y)} \varphi_{\bf m} 
&= 
 \gamma(y)\left\{\sum_{\ell =1}^{k}
m_{\ell} ( m_{\ell}-1 ) \varphi_{m_{\ell}-1} \prod_{\ell ' \neq \ell
}\varphi_{m_{\ell '}}
+ \sum_{\ell \neq \ell ' } m_{\ell}m_{ \ell '} \varphi_{m_{\ell}+ m_{\ell '}-1} 
\prod_{ \tilde{\ell} \neq \ell , \ell '} \varphi_{m_{ \tilde{\ell }}}\right\}\\
&\qquad -
\left\{ \sum_{\ell =1}^{k} [m_{\ell} (m_{\ell}-1) + m_{\ell} \theta(y)] +
\sum_{\ell \neq \ell '} m_{\ell} m_{ \ell '} \right\} \varphi_{\bf m}\\
&=:
 \gamma(y) g^{(\bf m)} - a ( \theta(y),  {\bf m})  \varphi_{\bf m}\,,
\end{aligned}
\end{equation}
where $ g^{(\bf m)} \in \mD_{X,n-1}$ denotes the function in the parenthesis on the
right--hand side in the first line. Since $f \in \mD_{X, n}$ is a linear combination of $\varphi_{\bf m}$'s,
there exists a function $ g= g^{(f)} \in \mD_{X,n-1}$ such that 
\begin{equation}\label{eq_supp_momODE}
\begin{aligned}
 \mathbf{E} [ f (X(t)) ]
=f(x)+
\int_{0}^{t}
\gamma(s) \mathbf{E} [g ( X(s))] - a (s)  \mathbf{E} [f( X(s))]
 \ud s\,,
\end{aligned}
\end{equation}
with  $ \gamma(s) := \gamma(Y (s))$ and $ a (s):= a(\theta(Y(s)),f) \in
\mathbf{R}$ the
corresponding linear
combination of $a ( \theta(y),  {\bf m})$ from \eqref{e_A_to_poly}.
For the sake of clarity we replaced in the previous step the $Y$-dependency by
a time-dependency
only, since $(Y(s))_{s \geqslant 0}$ is deterministic and the unique process associated to $\mG$.
Consequently, the functions $ s \mapsto a
(s), \gamma(s)$ are also deterministic and continuous.

We proceed by induction. 
First note that 
for $ f \in \mD_{X,0}$, $t \mapsto \mathbf{E} [f (X(t))] = f =
\mu_{t}(f)$ is constant.
Next, we assume that for every $ g \in \mD_{X,n'}$, $ n'<n $, 
$$ t \mapsto \mathbf{E} [g (X (t))]=: \mu_{t}(g)  $$
 is uniquely determined (independent of the specific choice of solution
$(X,Y)$) and continuous.
%
%
Therefore, for every $ f \in \mD_{X , n}$ the differential equation 
\begin{equation*}
\begin{aligned}
\frac{\ud}{\ud t} \mu_{t}(f) = \gamma(t)   \mu_{t}(g) - a (t) \mu_{t}(f) \,,
\quad \mu_{0}(f) = f(x)\,,
\end{aligned}
\end{equation*}
has a unique, continuous solution $ ( \mu_{t}(f))_{t \geqslant 0}$. For example, by
applying the method of variation of constants. 

We conclude that for every $ t \geqslant 0 $ and every $ f \in \mD_{X}$
\begin{equation}\label{e_moments_supp}
\begin{aligned}
  \mathbf{E} [f (X(t))]=\mu_{t}(f)\,,
\end{aligned}
\end{equation}
as a consequence of \eqref{eq_supp_momODE}.
Because $ \mD_{X}$ seperates points, i.e.~for every $ x \neq x'$ there exists a
$f \in \mD_{X}$ such that $ f(x) \neq f (x')$, \eqref{e_moments_supp} uniquely
determines a probability measure $ \mu_{t} \in \mM_{1}( \overline{\nabla}) $
\cite[Theorem~3.4.5]{EK_book}.

Overall, the law of $ (X(t),Y(t))$ is uniquely determined by $
\mu_{t} $, since $ Y(t)$ is
a deterministic process.
This concludes the proof, because
 uniqueness of
one-dimensional marginals characterises the law of $(X,Y)$ \cite[Theorem~4.4.2(a)]{EK_book}. 
\end{proof}
\subsection{Properties of the modulated PD diffusion}\label{sec_props}

In the remainder of this section, we summarise some key properties of the
modulated Poisson–Dirichlet diffusion $(X,Y)=(X (t),Y(t))_{t \geqslant 0} $.
In particular, we provide a rescaling argument that links $(X,Y)$ to the
classical PD diffusion $ \overline{X}$ of Ethier and Kurtz \cite{EK81}. 
Moreover, we characterise the diffusion's extremal invariant measures and show that it
concentrates for all positive times on the ``boundary'' of $S$.


\begin{lemma}\label{lem_prop}
Let $ (X,Y)$ be the modulated Poisson--Dirichlet diffusion generated by $\mL=\mA_{\gamma, \theta}
+\mG $,
as in Proposition~\ref{prop_L_generator}. 
\begin{enumerate}
\item[(i)] If $(X(0),Y(0))=(x,y) \in S$, then for every $ t \geqslant  0 $
\begin{equation*}
\begin{aligned}
( X(t), Y (t))
\stackrel{d}{=}
\big( \gamma ( Y (t)) \overline{X} (t), Y (t)\big)\,, 
\end{aligned}
\end{equation*}
where $ (  \overline{X}, Y )$ denotes the process on $
\overline{\nabla} \times S_{Y}$, generated by 
$\mA_{1, \beta}+\mG $, for any initial condition $ ( \overline{x}, y)$ satisfying
$ \gamma(y) \overline{x} =x $. Recall that $ \beta$ was defined in
\eqref{eq_def_beta}.

In particular, the marginal process $ \overline{X} $ agrees with the classical
Poisson--Dirichlet diffusion \eqref{e_gen_clPD} whenever $ \beta(\cdot) $ is constant.

\item[(ii)] 
For every stationary state $ y^{*} \in S_{Y}$ of $\mG$, the probability measure
\begin{equation}\label{eq_inv_meas}
\begin{aligned}
\mathrm{PD}_{[0, \gamma^{*}]} ( \theta^{*}  ) \otimes \delta_{ y^{*}} \in
\mM_{1}( S) \,,
\end{aligned}
\end{equation}
with  $\theta^{*} := \theta (y^{*}) \in [0, \infty)$ and $ \gamma^{*}:=
\gamma(y^{*}) \in [0,1]$, is invariant for $ \mL$.
Moreover, the measures \eqref{eq_inv_meas} are reversible and ergodic, in the sense that
if $ \lim_{t \to \infty} Y (t) = y^{*}$ then 
\begin{equation}\label{eq_ergodic}
\begin{aligned}
(X (t), Y (t)) \weakconv \mathrm{PD}_{[0, \gamma^{*}]} ( \theta^{*}  ) \otimes
\delta_{ y^{*}}\,,
\quad \text{as } t \to \infty\,.
\end{aligned}
\end{equation}
\end{enumerate}
\end{lemma}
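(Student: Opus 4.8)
The plan is to establish part (i) first, since part (ii) will follow from it together with classical facts about the Poisson--Dirichlet diffusion.

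\textbf{Part (i).} I would work at the level of the martingale problem. Let $(\overline{X}, Y)$ be the process on $\overline{\nabla} \times S_Y$ generated by $\mA_{1,\beta} + \mG$, started from $(\overline{x}, y)$ with $\gamma(y)\overline{x} = x$ (its well-posedness follows from Proposition~\ref{prop_L_generator} applied with $\gamma \equiv 1$, $\theta$ replaced by $\beta$, noting that $\beta$ itself is Lipschitz and nonnegative by Assumption~\ref{ass_def_L}.(d), so Assumption~\ref{ass_def_L} is satisfied for this auxiliary datum). Define the map $\Psi(\overline{x}, y) := (\gamma(y)\widehat{\overline{x}}, y)$, and set $(\tilde{X}(t), \tilde{Y}(t)) := \Psi(\overline{X}(t), Y(t))$. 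The key computation is a chain-rule identity: for $f \in \mD$,
\begin{equation*}
\begin{aligned}
\big(\mA_{1,\beta} + \mG\big)\big(f \circ \Psi\big)(\overline{x}, y) = \big(\mL f\big)\big(\Psi(\overline{x}, y)\big).
\end{aligned}
\end{equation*}
This is essentially the $M \to \infty$ version of Lemma~\ref{lem_rescaling} (or rather its infinite-dimensional analogue): the two extra drift terms produced when expanding $\mA_{1,\beta}(f\circ\Psi)$ and $\mG(f\circ\Psi)$ via \eqref{e_rescale_A}--\eqref{e_rescale_G} — one of the form $\sum_i(\beta - \theta)x_i\partial_{x_i}f$ coming from the $\beta$-drift acting on $\gamma(y)$-scaling, and one of the form $-\sum_i(\beta - \theta)x_i\partial_{x_i}f$ coming from $\mG$ hitting the factor $\gamma(y)$ — cancel exactly, by the very definition \eqref{eq_def_beta} of $\beta$. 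On $S_Y^*$ both sides vanish by Assumption~\ref{ass_def_L}.(a). Given this identity, if $(\overline{X}, Y)$ solves the martingale problem for $\mA_{1,\beta}+\mG$ then $(\tilde{X}, \tilde{Y})$ solves the martingale problem for $(\mL, \mD)$ with initial condition $\Psi(\overline{x}, y) = (x, y)$; one should check that $f \circ \Psi$ lies in (the bounded-pointwise closure of) the domain $\mD$, or argue by the usual approximation, handling the differentiability of $\gamma$ only on $\overline{\supp\gamma}$ exactly as in Remark~\ref{rem_notindomain} via the stopping time $T = \inf\{t : \gamma(Y(t)) = 0\}$. By the uniqueness part of Proposition~\ref{prop_L_generator} (equivalently Lemma~\ref{lem_hierarchy}), $(\tilde{X}, \tilde{Y}) \stackrel{d}{=} (X, Y)$, which is exactly the claimed identity $(X(t), Y(t)) \stackrel{d}{=} (\gamma(Y(t))\overline{X}(t), Y(t))$. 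The final sentence of (i) is immediate: if $\beta(\cdot) \equiv c$ is constant, $\mA_{1,\beta}$ does not depend on $y$ and coincides with $\mA_{1,c}$, the Ethier--Kurtz generator \eqref{e_gen_clPD}, so $\overline{X}$ is the classical PD diffusion.

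\textbf{Part (ii).} Fix a stationary state $y^* \in S_Y$ of $\mG$, so $b_k(y^*) = 0$ for all $k$; then $Y(t) \equiv y^*$ if $Y(0) = y^*$, and $(X,Y)$ reduces to $(\gamma^* \overline{X}, y^*)$ where $\overline{X}$ is generated by $\mA_{1, \beta(y^*)} = \mA_{1,\theta^*}$ — note that at a stationary state the drift-correction term in \eqref{eq_def_beta} vanishes because $b_k(y^*) = 0$, so $\beta(y^*) = \theta^*$. By Ethier--Kurtz \cite{EK81}, $\overline{X}$ has unique reversible invariant measure $\mathrm{PD}(\theta^*)$, and pushing forward by multiplication by $\gamma^*$ gives that $\mathrm{PD}_{[0,\gamma^*]}(\theta^*) \otimes \delta_{y^*}$ is invariant and reversible for $\mL$ restricted to $\{y = y^*\}$; invariance and reversibility for $\mL$ on all of $S$ follow since $\delta_{y^*}$ is stationary for $\mG$ and the $x$-dynamics conditioned on $Y = y^*$ is precisely the rescaled PD diffusion. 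For the ergodicity statement \eqref{eq_ergodic}: suppose $Y(t) \to y^*$. Using part (i), $X(t) = \gamma(Y(t))\overline{X}(t)$ in law, and $\gamma(Y(t)) \to \gamma^*$ by continuity of $\gamma$. For the $\overline{X}$-factor I would invoke convergence to equilibrium for the (time-inhomogeneous, but asymptotically autonomous) PD-type diffusion with drift parameter $\beta(Y(t)) \to \beta(y^*) = \theta^*$: since the moment hierarchy of Lemma~\ref{lem_hierarchy} has coefficients $a(t), \gamma(t)$ converging to their stationary values, the solutions $\mu_t(f)$ converge to the moments of $\mathrm{PD}_{[0,\gamma^*]}(\theta^*)$ for every $f \in \mD_X$ — a standard asymptotic-stability argument for the linear ODE system — and since $\mD_X$ separates points and $\overline{\nabla}$ is compact, this gives weak convergence $\mathrm{Law}(X(t)) \to \mathrm{PD}_{[0,\gamma^*]}(\theta^*)$; combined with $Y(t) \to y^*$ deterministically, one obtains \eqref{eq_ergodic}.

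\textbf{Main obstacle.} The delicate point is the chain-rule identity and the domain issue that comes with it: $f \circ \Psi$ need not be smooth where $\gamma$ fails to be $C^1$, and it is not literally an element of $\mD$, so one must either pass through the finite-dimensional approximations of Lemma~\ref{lem_mp_exists} (i.e. prove the identity for $f \circ \proj$ and take $M \to \infty$, already done in Lemma~\ref{lem_rescaling}) or localise via the stopping time $T$ and argue that after $T$ the process sits at $(0, y)$ with $\gamma(Y) = 0$ so both generators annihilate $f$. A secondary technical point is making the convergence-to-equilibrium step in \eqref{eq_ergodic} rigorous for the genuinely time-inhomogeneous hierarchy; the cleanest route is to note that the finite triangular system of linear ODEs for $(\mu_t(\varphi_{\mathbf m}))$ has a unique bounded solution converging to the stationary one whenever the coefficients converge, proceeding by induction on the degree $n$ exactly as in the proof of Lemma~\ref{lem_hierarchy}.
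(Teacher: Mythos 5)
Your proof of part (i) and of the invariance/reversibility claim in part (ii) is essentially the paper's argument: the same chain-rule computation showing that the two drift correction terms $\pm(\beta-\theta)\sum_i x_i\partial_{x_i}f$ produced by $\mA_{1,\beta}$ acting on the $\gamma(y)$-rescaling and by $\mG$ hitting the factor $\gamma(y)$ cancel by the definition \eqref{eq_def_beta} of $\beta$, followed by uniqueness of the martingale problem for $(\mL,\mD)$ to identify $(\gamma(Y)\overline{X},Y)$ with $(X,Y)$; and for invariance, restriction to $\{y=y^*\}$, the observation $\beta(y^*)=\theta^*$ since $b_k(y^*)=0$, and the Ethier--Kurtz result that $\mathrm{PD}(\theta^*)$ is the unique reversible invariant law for $\mA_{1,\theta^*}$, pushed forward by multiplication with $\gamma^*$. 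Your handling of the domain issue ($f\circ\Psi$ not smooth off $\overline{\supp\gamma}$, absorption on $S_Y^*$, stopping at $T=\inf\{t:\gamma(Y(t))=0\}$) matches the paper's Remark~\ref{rem_notindomain}, and your remark that well-posedness of the auxiliary process $(\overline{X},Y)$ follows from Proposition~\ref{prop_L_generator} with $\gamma\equiv 1$ and $\theta$ replaced by $\overline\beta$ is a small point the paper leaves implicit.

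Where you genuinely diverge is the ergodicity statement \eqref{eq_ergodic}. The paper argues via part (i) on the auxiliary process: compactness of $\mM_1(\overline\nabla)$ gives tightness of the laws of $\overline{X}(t)$, and any accumulation point is identified with the unique invariant measure $\mathrm{PD}(\beta^*)$ of $\mA_{1,\beta^*}$, whence $\overline{X}(t)\Rightarrow\mathrm{PD}(\theta^*)$ and the claim follows after multiplying by $\gamma(Y(t))\to\gamma^*$. You instead run the moment hierarchy of Lemma~\ref{lem_hierarchy} for $X$ itself: since $\gamma(Y(t))\to\gamma^*$ and $\theta(Y(t))\to\theta^*$, the triangular linear ODE system (whose decay coefficients $a(\theta,\mathbf m)\geqslant 2$ for every monomial, uniformly in $\theta\geqslant 0$) is asymptotically autonomous, so by induction on the degree $\mu_t(\varphi_{\mathbf m})$ converges to the unique solution of the stationary recursion, which the moments of $\mathrm{PD}_{[0,\gamma^*]}(\theta^*)$ satisfy by the invariance already proved; separation of points by $\mD_X$ then upgrades moment convergence to weak convergence. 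Both routes are sound; yours is more explicit precisely where the paper is terse (the identification of accumulation points as invariant measures for the limiting autonomous generator is asserted rather than argued there), at the price of the extra observation that the stationary moment recursion determines the limit uniquely. No gaps.
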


\begin{proof}
(i). To conclude the first statement, it suffices to check that $\big( \gamma ( Y (t)) \overline{X} (t), Y
(t)\big)_{t \geqslant 0}$ solves the martingale problem for $ (\mL, \mD)$.
Similar as in the proof of Lemma~\ref{lem_rescaling}, we can show by
explicit calculations that
for all $ f \in \mD $ and $ ( \overline{x}, y) \in \overline{\nabla}\times S_{Y}$ 
\begin{equation*}
\begin{aligned}
\mA_{1, \beta(y)} f( \gamma(y) \cdot, y)( \overline{x})
&=
\gamma(y)^{2} 
\sum_{i ,j =1}^{ \infty} \overline{x}_{i} ( \delta_{i,j} - 
\overline{x}_{j}) \partial_{i,j}^{2}f ( \gamma (y)
\overline{x}, y)
-
\beta (y) \gamma(y) \sum_{i=1}^{ \infty} \overline{x}_{i} \partial_{i} f(
\gamma(y) \overline{x}, y)
\\
&=
\mA_{ \gamma (y), \theta (y)}f (\cdot,y)( \gamma (y) \overline{x})
- \big( \beta(y) - \theta (y)\big)
\sum_{i =1}^{ \infty} \gamma(y) \overline{x}_{i} \partial_{i} f ( \gamma(y) \overline{x}, y)\,.
\end{aligned}
\end{equation*}
Similarly, 
\begin{equation*}
\begin{aligned}
\mG f ( \gamma (\cdot) \overline{x}, \cdot) (y)
&= 
\sum_{k =0}^{A-1} b_{k}(y)\bigg( \partial_{y_{k}}f ( \gamma(y) \overline{x},y)
+
 \sum_{i =1}^{M} \partial_{x_{i}}f ( \gamma(y) \overline{x},y) \overline{x}_{i} \partial_{y_{k}} \gamma (y) \bigg)\\
&=
 \mG f ( \gamma(y) \overline{x},\cdot )(y) + 
\big( \beta(y) - \theta(y)\big)
\sum_{i =1}^{M} \gamma (y) \overline{x}_{i} \partial_{x_{i}} f ( \gamma(y)
\overline{x} ,y)\,,
\end{aligned}
\end{equation*}
such that overall
\begin{equation}\label{e_rescal_PDdiff}
\begin{aligned}
\Big(
\mA_{1, \beta} 
+
\mG\Big) f ( \gamma( \cdot) \cdot, \cdot)( \overline{x},y)
= 
\Big(
\mA_{ \gamma, \theta} 
+
\mG\Big) f ( \gamma(y) \overline{x},y) = \mL f (\gamma(y) \overline{x}, y)\,.
\end{aligned}
\end{equation}
Consequently, because $ ( \overline{X}, Y)$ solves the martingale problem for $
(\mA_{1 , \beta} + \mG , \mD)$,
\begin{equation*}
\begin{aligned}
t\mapsto
&\ g ( \overline{X}_{t}, Y_{t}) - \int_{0}^{t} (\mA_{1, \beta (Y_{s})}+ \mG )g
( \overline{X}_{s}, Y_{s} ) \ud s\\
 &=f( \gamma (Y_{t}) \overline{X}_{t}, Y_{t}) 
- \int_{0}^{t} 
\mA_{\gamma(Y_{s}), \theta(Y_{s})} f ( \cdot, Y_{s}) ( \gamma(Y_{s})
\overline{X}_{s}) 
+ \mG f ( \gamma (Y_{s}) \overline{X}_{s}, \cdot ) (Y_{s})
\ud s\,,
\end{aligned}
\end{equation*}
is a martingale, where $ g ( \overline{x}, y) := f ( \gamma(y) \overline{x},
y)$.
 \\

(ii). First, we prove that for every stationary state $y^{*}$ of $\mG$ the
measure \eqref{eq_inv_meas} is invariant (and reversible)
with respect to $\mL$.
For this, we leverage on the first part of the lemma and use the fact that
$\mathrm{PD}( \beta^{*}) \in \mM_{1}( \overline{\nabla})$, $ \beta^{*}:= \beta(
y^{*}) $, is the unique invariant (and reversible)
distribution for $ \mA_{1, \beta^{*}}$
\cite{EK81}.
Therefore, for every stationary state $ y^{*} \in S_{Y}$ of $\mG$, the measure
\eqref{eq_inv_meas} is invariant (and reversible) for $\mL $, because
$\mL = \mA_{ \gamma^{*},
\theta^{*}}+ \mG $ on the support of \eqref{eq_inv_meas}.
\\

To show ergodicity, we assume that $y^{*}= \lim_{ t\to \infty} Y(t)$ exists.
Then \eqref{eq_ergodic} is equivalent to 
\begin{equation*}
\begin{aligned}
\overline{X} (t) \weakconv \mathrm{PD} ( \theta^{*}) \,,
\end{aligned}
\end{equation*}
where $ ( \overline{X},Y)$ is generated by $ \mA_{1, \beta} + \mG $, see (i).
Because $ \mM_{1}( \overline{\nabla}) $ is compact, the sequence of laws $
(\overline{X}(t))_{t \geqslant 0}$ is tight and admits subsequential limits.
If $ \nu$ is such an accumulation point, then $\nu = \mathrm{PD}(
\beta^{*})$, since $ \nu$ must be the unique invariant distribution for $
\mA_{1 , \beta^{*}}$. 

It remains to note that
\begin{equation*}
\begin{aligned}
\beta^{*}= 
\beta( y^{*}) 
= 
\theta( y^{*}) 
+
\sum_{k=0}^{A-1} b_{k}( y^{*})
\frac{\partial_{y_{k}} \gamma ( y^{*})}{ \gamma ( y^{*})} 
= \theta^{*}\,,
\end{aligned}
\end{equation*}
where we used that $ b_{k}( y^{*}) =0$, because $\mG f (y^{*}) =0$ for $f (y) =
y_{k}$, for all
$k$.
\end{proof}

\begin{corollary}\label{cor_concentration}
Let $ (X,Y)$ be the modulated Poisson--Dirichlet diffusion generated by $\mL=\mA_{\gamma, \theta}
+\mG $,
as in Proposition~\ref{prop_L_generator}. 
Then for any initial condition $(X (0), Y(0)) \in S $, almost surely 
\begin{equation*}
\begin{aligned}
\sum_{i=1 }^{\infty}X_{i}(t) = \gamma(Y (t))\,, \quad \text{for all } t >0\,.
\end{aligned}
\end{equation*}
\end{corollary}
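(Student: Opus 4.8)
The plan is to reduce the claim, via the rescaling identity of Lemma~\ref{lem_prop}(i), to the instantaneous concentration of the classical Poisson--Dirichlet diffusion established by Ethier and Kurtz in \cite[Theorem~2.6]{EK81}. By Lemma~\ref{lem_prop}(i) --- and since the martingale problem for $(\mL,\mD)$ is well posed (Proposition~\ref{prop_MP_well_posed}), so that this is in fact an equality in law on path space --- the process $(X,Y)$ has the same distribution as $\big(\gamma(Y(\cdot))\,\overline{X}(\cdot),\,Y(\cdot)\big)$, where $(\overline{X},Y)$ is the process on $\overline{\nabla}\times S_{Y}$ generated by $\mA_{1,\beta}+\mG$. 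Under this identification $\sum_{i}X_{i}(t)=\gamma(Y(t))\sum_{i}\overline{X}_{i}(t)$ holds pathwise, so the corollary is equivalent to the statement that $\sum_{i}\overline{X}_{i}(t)=1$ for all $t>0$, almost surely. (At a time $t$ with $\gamma(Y(t))=0$ the identity $\sum_{i}X_{i}(t)=0=\gamma(Y(t))$ holds automatically, because $(X(t),Y(t))\in S$ forces $x=0$ there.)

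Next I would reduce to fixed times. The set $\nabla=\{x\in\overline{\nabla}:\sum_{i}x_{i}=1\}$ is absorbing for $(\overline{X},Y)$, in the sense that $\overline{X}(s)\in\nabla$ implies $\overline{X}(u)\in\nabla$ for all $u\ge s$; this is part of the content of \cite[Theorem~2.6]{EK81} for a frozen parameter and transfers to the present time-dependent parameter via the measure-valued (Fleming--Viot) representation, in which $\overline{X}$ is the vector of ranked atom masses of a probability-measure-valued process. Granting this, it suffices to show that $\sum_{i}\overline{X}_{i}(t)=1$ a.s.\ for each fixed $t>0$; the statement ``for all $t>0$ simultaneously'' then follows by picking, for each $t$, a rational $q\in(0,t)$ and invoking the absorbing property.

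Finally, fix $t>0$. Since $Y$ is the deterministic solution of the ODE~\eqref{eq_ode}, the map $s\mapsto\beta(Y(s))$ is a fixed continuous, non-negative function of time, and $\overline{X}$ is a time-inhomogeneous diffusion that coincides, on any interval where $\beta(Y(\cdot))$ is constant, with the classical Poisson--Dirichlet diffusion generated by $\mA_{1,\beta}$. I would deduce $\sum_{i}\overline{X}_{i}(t)=1$ a.s.\ from \cite[Theorem~2.6]{EK81} by partitioning $(0,t]$ into finitely many short subintervals on which $\beta(Y(\cdot))$ varies by at most $\varepsilon$, comparing on each the expected ``dust'' $\bE\big[1-\sum_{i}\overline{X}_{i}(\cdot)\big]$ with that of a homogeneous Poisson--Dirichlet diffusion with a nearby parameter, and letting $\varepsilon\downarrow0$; alternatively one invokes that Kingman's coalescent comes down from infinity, which also handles the degenerate regime in which $\beta$ vanishes on part of the orbit of $Y$.

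The main obstacle is precisely this last step: transferring the homogeneous instantaneous-concentration statement \cite[Theorem~2.6]{EK81} to the time-inhomogeneous parameter $\beta(Y(\cdot))$ (and to the pure Fleming--Viot regime when $\beta$ vanishes). The remaining ingredients --- the rescaling identity, the absorbing property of $\nabla$, and the upgrade from a single fixed time to all times --- are routine.
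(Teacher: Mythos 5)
Your first reduction is exactly the paper's: via Lemma~\ref{lem_prop}(i) (whose proof shows $(\gamma(Y(\cdot))\overline{X}(\cdot),Y(\cdot))$ solves the martingale problem for $(\mL,\mD)$, so well-posedness indeed upgrades the identity to path space) it suffices to show $\sum_{i}\overline{X}_{i}(t)=1$ for all $t>0$ a.s.\ for the process generated by $\mA_{1,\beta}+\mG$, and the case $\gamma(Y(t))=0$ is trivial. The problem is everything after that: the step you yourself flag as ``the main obstacle'' --- transferring \cite[Theorem~2.6]{EK81} to the time-inhomogeneous drift $\beta(Y(\cdot))$ --- is precisely the content of the corollary, and neither of your proposed routes is carried out or obviously available. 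The piecewise-constant comparison of the expected ``dust'' $\bE[1-\sum_i\overline{X}_i(\cdot)]$ requires a stability-in-$\beta$ estimate for the law of the diffusion (and a way to handle random initial states on each subinterval) that you do not supply; the alternative appeal to Kingman's coalescent coming down from infinity presupposes a duality or Fleming--Viot/lookdown representation of the modulated process, which is not established anywhere in the paper. The same objection applies to your claimed absorbing property of $\nabla$ for the inhomogeneous process, which you assert ``transfers via the measure-valued representation'': that representation is exactly what is missing, so the reduction from all $t>0$ to fixed times also rests on an unproved ingredient.

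The paper avoids all of this by not reducing to the homogeneous case at all: it reruns the Ethier--Kurtz argument directly with $\beta(Y(s))$ time-dependent. One extends the domain to the monomials $\varphi_m$, forms the martingales \eqref{eq_conc_supp3}, obtains via the limits $m\searrow 2$ and $m\searrow 1$ that $\overline{X}(t)\in\nabla$ for a.e.\ $t$ and the a.s.\ representation \eqref{eq_conc_supp4} with an increasing remainder $\chi$, and then observes that the only quantitative appearance of the parameter is the bound $\int_{t'}^{t}\beta(Y(s))\,\ud s\leqslant \delta\sup_{y\in S_Y}\beta(y)$ in \eqref{eq_theta_est_boundary}, which is completely insensitive to time-inhomogeneity (only boundedness and continuity of $\beta$ on $S_Y$ are used). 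This handles all $t>0$ simultaneously in one stroke, with no fixed-time reduction, no absorption claim, and no comparison with nearby homogeneous Poisson--Dirichlet diffusions. As it stands, your proposal is a correct reduction followed by a sketch whose key step is acknowledged but not closed, so it does not constitute a proof.
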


The statement follows almost verbatim from the proof of \cite[Theorem~2.6]{EK81}, where
the case $ \gamma =1 $ and $ \theta (\cdot) = \beta(\cdot)= \Theta$ is treated.
The only explicit occurrence of the
variable $ \beta (\cdot)$ is in the estimate \eqref{eq_theta_est_boundary}
below, which remains unaffected.
We sketch the argument for completeness. 

\begin{proof}
Due to Lemma~\ref{lem_prop} it suffices to prove that the process $ (
\overline{X},Y)$, generated by $ \mA_{1, \beta}+ \mG $ satisfies almost surely
\begin{equation*}
\begin{aligned}
\sum_{i =1}^{\infty } \overline{X}_{i }(t) = 1\,,  \quad \text{for all } t >0\,.
\end{aligned}
\end{equation*}

The idea of the proof is to extend the domain of $\mL$, more precisely, the domain of
$\mA_{1, \beta ( \cdot)} $
from $\mD_{X}$ to include functions $ \varphi_{m} (x ) = \sum_{i
=1}^{ \infty} x_{i}^{m}$, $m \geqslant 1$. For these new test functions, $\mA_{1, \theta (\cdot)}$
is naturally extended to
\begin{equation*}
\begin{aligned}
\mA_{1, \beta (y)} \varphi_{m} (x) 
= 
m(m-1) \varphi_{m-1}(x)
- m ( (m-1)+ \beta(y))  \varphi_{m}(x)\,,
\end{aligned}
\end{equation*}
noting that $ \varphi_{1}:= 1$.
In particular, we may define the processes
\begin{equation}\label{eq_conc_supp3}
\begin{aligned}
Z_{m}(t) := \varphi_{m}( \overline{X} (t)) - \varphi_{m}( \overline{X} (0)) - \int_{0}^{t} \mA_{1,
\beta (Y(s))} \varphi_{m}( \overline{X}(s))  \ud s\,,
\end{aligned}
\end{equation}
which are continuous square-integrable martingales. For $ m \geqslant 2$, this
is simply a consequence of Dynkin's formula, whereas the case $m \in (1,2)$
requires additional work since $ u \mapsto u^{m}$ is not an element of $
C^{2}([0,1])$.

First, we can use the martingale formulation \eqref{eq_conc_supp3} to show 
\begin{equation}\label{eq_conc_supp1}
\begin{aligned}
\lim_{m \searrow 2} 
\frac{1}{2} 
\mathbf{E} \big[Z_{m}(t) - Z_{2}(t) \big]
=
\mathbf{E}\bigg[\int_{0}^{t}
\Big(
1- \sum_{i =1}^{ \infty} \overline{X}_{i}(s)
\Big) \ud s\bigg]
=0\,,
\end{aligned}
\end{equation}
which is a consequence of the bounded and pointwise convergence of
\begin{equation*}
\begin{aligned}
\mA_{1,
\beta (y)} \big(\varphi_{2}(x)
- \varphi_{m}(x)\big) \to 2\Big(
1- \sum_{i =1}^{ \infty} x_{i}
\Big)\,, \quad \text{on }  \overline{\nabla} \times S_{Y}\,.
\end{aligned}
\end{equation*}
In particular, \eqref{eq_conc_supp1} implies that almost surely 
\begin{equation}\label{eq_conc_supp2}
\begin{aligned}
\overline{X} (t) \in \nabla \quad \text{for almost every } t >0\,.
\end{aligned}
\end{equation}

To lift \eqref{eq_conc_supp2} to all positive times, we perform a second
moment calculation for $ Z_{m}(t)$ which, in combination with \eqref{eq_conc_supp2},
yields 
\begin{equation*}
\begin{aligned}
\lim_{m \searrow 1} \mathbf{E} \big[ |Z_{m} (t) |^{2}\big]
= 
\lim_{m \searrow 1} 
m^{2} \int_{0}^{t} \mathbf{E} \big[ \varphi_{2m-1} ( \overline{X} (s)) -
\varphi^{2}_{m}( \overline{X} (s)) \big] \ud s=0\,.
\end{aligned}
\end{equation*}
In turn, this implies that $\lim_{m \searrow 1} \sup_{0 \leqslant t \leqslant
T} | Z_{m} (t) | =0$ in $ \mathbf{P}$-probability, by Doob's martingale
inequality, for any $T >0$. 
Thus, there exists a  subsequence $(m_{n})_{n}$ along which the convergence
holds almost surely.

We rewrite \eqref{eq_conc_supp3} in the limit $m_{n} \to 1$, which yields  almost
surely 
\begin{equation}\label{eq_conc_supp4}
\begin{aligned}
\sum_{i =1}^{\infty} \overline{X}_{ i} (t) = \sum_{i =1}^{\infty} \overline{X}_{i}(0) 
+ \chi(t)
-\int_{ 0}^{t} \beta(Y(s))  \ud s\,, \qquad \forall \ 0 \leqslant t \leqslant
T\,,
\end{aligned}
\end{equation}
where $ \chi$ is of the explicit form 
\begin{equation*}
\begin{aligned}
\chi (t) := 
\limsup_{n \to \infty}  
\int_{0}^{t} (m_{n}-1)  \varphi_{m_{n}-1}( \overline{X} (s)) \ud s
\,.
\end{aligned}
\end{equation*}
Finally, let $ \delta>0$ be arbitrary. 
Then almost surely for every $ t \in (0, T]$
there exists a $ t' \in  (t- \delta, t ) $ such
that $ \overline{X} (t') \in \nabla$, due to \eqref{eq_conc_supp2}. In particular, using
the representation \eqref{eq_conc_supp4}, we have
\begin{equation}\label{eq_theta_est_boundary}
\begin{aligned}
\sum_{i =1}^{\infty} \overline{X}_{ i} (t)
- 
1
= \chi(t) - \chi(t') - \int_{t'}^{t} \beta(Y (s)) \ud s
\geqslant  - \delta \sup_{y \in S_{Y}} \beta(y) \,,
\end{aligned}
\end{equation}
where we used that $ \chi$ is increasing in time.
Because $ \delta >0$ was arbitrary, we conclude the statement of the corollary. 
\end{proof}

\section{Outline and proof of the main result}\label{sec_main}

In this section, we  establish the
convergence of the inclusion process in Theorem~\ref{thm_main}.
To motivate the subsequent steps, we first explain the dilemma one encounters
when trying to prove this convergence naively.

\subsection{A discontinuity caused by instantaneous condensation}

Naturally, the analysis of the inclusion process in the thermodynamic limit 
requires observation of both the slow and the fast phase separately. 
The first reasonable guess in doing so is the embedding
\begin{equation}\label{e_embd_naiiv}
\begin{aligned}
\overline{\embdcomb}( \eta) 
= \big( \tfrac{1}{N} \hat{\eta}^{+}, \tfrac{1}{L} ( \#_{0} \eta, \ldots,
\#_{A} \eta) \big)\,.
\end{aligned}
\end{equation}
Recall that $ \eta^{+}_{i} = \max \{ \eta_{i}-A, 0\}$ and $ \#_{k} \eta =
\sum_{i =1}^{L} \mathds{1}_{ \eta_{i}=k}$.
The difference to $\embdcomb$ in
\eqref{e_def_embd_direct} is that \eqref{e_embd_naiiv} also includes the occupations
$\#_{A} \eta $.

Next, let us assume that
\begin{equation}\label{eq_wrong_conv}
\begin{aligned}
\Big(\overline{\embdcomb}\big( \eta^{(L,N )} (t)\big)\Big)_{t \geqslant 0} 
\weakconv (X (t) , \overline{Y}(t))_{t \geqslant 0} \,,
\quad \text{with }\quad \overline{Y}_A (t) = \lim_{\tdlim} \frac{\#_{A}
\eta^{(L,N)}(t)}{L}  \,.
\end{aligned}
\end{equation}
In the following, we are going to explain that $ \overline{Y}_{A}(t)$ has a discontinuity 
at $ t=0$. 
This violates that paths of the limit process are right--continuous, 
and deems a proof of the convergence \eqref{eq_wrong_conv} by means of Markov
processes unfeasible.

As an example, let us  consider the classical ($A=0$) inclusion process $ ( \eta (t))_{t
\geqslant 0}$ with generator \eqref{eq_classic_IP}, and initial configuration $ \eta_{i}(0) =1$ for all $i =1,
\ldots, L $. 
It was proven in \cite{CGG24} that almost all particles immediately cluster
together such that $ \eta_{i}(t) =0$ except for a vanishing fraction of sites,
when taking the thermodynamic limit. In particular, when $ \tdlim $
\begin{equation*}
\begin{aligned}
\overline{Y}_{0}(0)=0\,,\quad \text{but} \quad \overline{Y}_{0}(t)=1 \quad
\text{ for all }\  t >0\,.
\end{aligned}
\end{equation*}
When considering more general transition rates, as outlined in  Assumption~\ref{ass}, 
this discontinuity persists, making it unsuitable to observe $ \overline{Y}_{A}$. Consequently, the observable
$\#_{A} \eta $ in \eqref{e_embd_naiiv}  must be disregarded, leading to the
embedding $\embdcomb$ \eqref{e_def_embd_direct}.

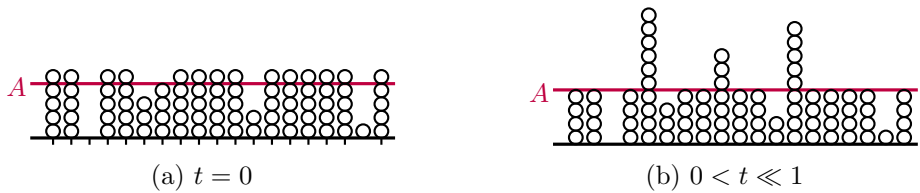
\begin{figure}[H]
\centering
\captionsetup{width=.8\linewidth}
\captionsetup{width=.8\linewidth}
\begin{subfigure}{0.45\textwidth}
\centering
    {
	\begin{tikzpicture}[scale = 0.6]
\draw (2.2,1.55) node[anchor=north]  {\textcolor{purple}{{\small $A$}}};
\draw[very thick, purple] (2.5,1.2) -- (10.5,1.2);

\draw[ thick] (9.8,.15) circle (0.15);

\foreach \x in {3, 3.4, 4.2, 4.6, 5, 5.4, 5.8, 6.2, 6.6, 7, 7.4, 7.8, 8.2,
8.6, 9, 9.4, 10.2} {
    \foreach \y in {0.15, 0.45} {
        \draw[ thick] (\x,\y) circle (0.15);
    }
}

\foreach \x in {3, 3.4, 4.2, 4.6, 5, 5.4, 5.8, 6.2, 6.6, 7, 7.8, 8.2,
8.6, 9, 9.4, 10.2} {
    \foreach \y in { .75} {
        \draw[ thick] (\x,\y) circle (0.15);
    }
}

\foreach \x in {3, 3.4, 4.2, 4.6, 5.4, 5.8, 6.2, 6.6, 7, 7.8, 8.2,
8.6, 9, 9.4, 10.2} {
    \foreach \y in {1.05} {
        \draw[ thick] (\x,\y) circle (0.15);
    }
}

\foreach \x in {3, 3.4, 4.2, 4.6, 5.8, 6.2, 6.6, 7, 7.8, 8.2,
8.6, 9, 9.4, 10.2} {
    \foreach \y in {1.35} {
        \draw[ thick] (\x,\y) circle (0.15);
    }
}

\foreach \x in {3, 3.4, 3.8, 4.2, 4.6, 5, 5.4, 5.8, 6.2, 6.6, 7, 7.4, 7.8, 8.2,
8.6, 9, 9.4, 10.2} {
	\draw[thick] (\x,0) -- (\x,-0.15) {};	
}

\draw[very thick] (2.5,0) -- (10.5,0);

\end{tikzpicture}
    }
  \caption{$ t = 0$}
\end{subfigure}
\begin{subfigure}{0.45\textwidth}
\centering
    {
	\begin{tikzpicture}[scale = 0.6]
\draw (2.2,1.55) node[anchor=north]  {\textcolor{purple}{{\small $A$}}};
\draw[very thick, purple] (2.5,1.2) -- (10.5,1.2);

\draw[ thick] (9.8,.15) circle (0.15);

\foreach \x in {3, 3.4, 4.2, 4.6, 5, 5.4, 5.8, 6.2, 6.6, 7, 7.4, 7.8, 8.2,
8.6, 9, 9.4, 10.2} {
    \foreach \y in {0.15, 0.45} {
        \draw[ thick] (\x,\y) circle (0.15);
    }
}

\foreach \x in {3, 3.4, 4.2, 4.6, 5, 5.4, 5.8, 6.2, 6.6, 7, 7.8, 8.2,
8.6, 9, 9.4, 10.2} {
    \foreach \y in { .75} {
        \draw[ thick] (\x,\y) circle (0.15);
    }
}

\foreach \x in {3, 3.4, 4.2, 4.6, 5.4, 5.8, 6.2, 6.6, 7, 7.8, 8.2,
8.6, 9, 9.4, 10.2} {
    \foreach \y in {1.05} {
        \draw[ thick] (\x,\y) circle (0.15);
    }
}

\foreach \y in {1.35, 1.65,1.95,2.25,2.55,2.85} {
     \draw[ thick] (4.6,\y) circle (0.15);
}

\foreach \y in {1.35, 1.65,1.95} {
     \draw[ thick] (6.2,\y) circle (0.15);
}

\foreach \y in {1.35, 1.65,1.95,2.25,2.55} {
     \draw[ thick] (7.8,\y) circle (0.15);
}

\draw[very thick] (2.5,0) -- (10.5,0);

\end{tikzpicture}
    }
  \caption{$0< t \ll 1$}
\end{subfigure} 
\caption{
On the left we see a possible initial particle configuration, which leads to
instantaneous clustering after an infinitesimal small time on the right.
}
\label{fig_fast_clustering}
\end{figure}

This discontinuous behaviour makes the convergence of the inclusion process delicate.
Classically, convergence of particle systems is based on the
Trotter--Kurtz theorem which allows to deduce convergence of the processes from
the convergence of infinitesimal
generators
\begin{equation}\label{eq_ex_opconv}
\begin{aligned}
\lim_{ \tdlim}\mathfrak{L}_{L, N} (f \circ \embdcomb)  = \mL f (\embdcomb (
\cdot))\,.
\end{aligned}
\end{equation}
When doing so, one will inevitably encounter $\#_{A} \eta $. 
For instance, consider a smooth function $f $ in \eqref{eq_ex_opconv} that only
depends on $ \embdcomb ( \eta) $ in terms of  $ \tfrac{1}{L} \#_{A-1} \eta$.
The infinitesimal change of $ f$ depends on particles jumping from $
\eta_{i} = A $ to another site, since such jumps leave behind $ \eta_{i} =
A-1$ particles which in turn leads to an increase of $ \tfrac{1}{L} \#_{A-1} \eta$ by $+
\tfrac{1}{L}$:
\begin{center}
	\begin{tikzpicture}[scale = 0.5]
\draw (6,-.25) node[anchor=north]  {\scriptsize{$j$}}
(4,-.25) node[anchor=north]  {\scriptsize{$i$}}
(2,2) node  {\scriptsize{$A$}}
(13.5,1) node  {leads to \quad {$\tfrac{\#_{A-1} \eta}{L} \mapsto
\tfrac{\#_{A-1} \eta}{L} + \tfrac{1}{L}$.}}
;
\draw[very thick, purple] (2.5,2) -- (6.5,2);
\draw[thick]
(3,0.25) circle (0.25) {}
(4,0.25) circle (0.25) {}
(5,0.25) circle (0.25) {}
(6,0.25) circle (0.25) {}
(6,0.75) circle (0.25) {}
(6,1.25) circle (0.25) {}
(6,1.75) circle (0.25) {}
(6,2.25) circle (0.25) {}
;
\draw[very thick]
(3,0) -- (3,-0.25) {}
(4,0) -- (4,-0.25) {}
(5,0) -- (5,-0.25) {}
(6,0) -- (6,-0.25) {}
;
\draw[thick] (4,0.75) circle (0.25) {}
(4,1.25) circle (0.25) {}
 (6,.75) circle (0.25) {} ;
\draw[very thick] (2,0) -- (7,0);
\draw[thick, purple] (4,1.75) circle (0.25) {};
\draw[thick] (5,.75) circle (0.25) {} ;
\draw[fill, gray] (6,2.75) circle (0.25) {} ;
\draw[thick,->] (4,2.1) to [out=70,in=150] (5.7, 2.85);
\end{tikzpicture} 
\end{center}
One can check that the corresponding infinitesimal shift in the observable $ f$ is then
approximately given by
\begin{equation}\label{eq_exmpl_op_conv}
\begin{aligned}
&\sum_{i ,j =1}^{L}
\mathds{1}_{\eta_{i}=A}
u_{1}( A) u_{2}( \eta_{j} ) \mathds{1}_{
\eta_{j}> A}
\frac{ 1}{L} f'\big( \tfrac{1}{L} \#_{A-1} \eta  \big)
 \simeq 
\rho \,
q_{A}\  \frac{\#_{A} \eta}{L} 
\bigg\{\sum_{j =1}^{L}
 \frac{ \eta_{j}^{+}}{N}\bigg\}
f'\big( \tfrac{1}{L} \#_{A-1} \eta  \big)\,,
\end{aligned}
\end{equation}
where we restricted ourselves to particles jumping from $ \eta_{i} =A$ to
sites $ j $ with $ \eta_{j}>A$.
Here, $ q_{A}$ as in Assumption~\ref{ass}.

To establish convergence of generators as in \eqref{eq_ex_opconv},  the
right--hand side of \eqref{eq_exmpl_op_conv} had to be entirely
expressed in terms of $\embdcomb ( \eta)$. 
Indeed, the sum on the right--hand side counts the relative number of particles
above the threshold $A$ and can be  expressed in terms of $
\tfrac{\#_{k} \eta}{L}$, $k=0,\ldots, A-1$.
However, this is not possible for 
\begin{equation}\label{eq_supp1_dilemma}
\begin{aligned}
\frac{\#_{A} \eta(t)}{L} 
=
1- \frac{1}{L}
\sum_{k =0}^{A-1} \#_{k} \eta(t)
- \frac{1}{L} \sum_{i =1}^{L} \mathds{1}_{ \eta_{i}(t)>A}\,.
\end{aligned}
\end{equation}
While the first sum on the right--hand side of \eqref{eq_supp1_dilemma} converges to $ \sum_{k
=0}^{A-1}
Y_{k}(t) $,
 the last sum cannot be expressed in terms of $X(t)$ or $Y (t)$, when taking
the limit. 
In fact, the last sum in \eqref{eq_supp1_dilemma} is not continuous, when
viewed as a function on $S$, cf. Figure~\ref{fig_fast_clustering}.
Therefore, trying to prove convergence directly at the level of generators seems destined for failure.

At the same time, the behaviour described in Figure~\ref{fig_fast_clustering}
indicates that within an infinitesimal
time frame, all particles above the threshold $A$ are clustering together, leaving
behind sites that are occupied by exactly $A$ particles only.
This suggests that for positive times,
$\tfrac{1}{L} \#_{A} \eta(t)$ can 
be well approximated in terms of \eqref{eq_supp1_dilemma}, by neglecting the last
term on the right--hand side.
The proof of Theorem~\ref{thm_main} is based on this heuristic.

\subsection{Proof of Theorem~\ref{thm_main}}

The first step towards the proof of Theorem~\ref{thm_main} is to approximate
the generator $\mL $ of the modulated PD diffusion by $\mathfrak{L}_{L,N}$
\eqref{e_IP_gen}. 
To this end, we recall the embedding \eqref{e_def_embd_direct} of particle
configurations into $ \overline{\nabla} \times S_{Y}$:
\begin{equation*}
\begin{aligned}
\embdcomb ( \eta) = 
 \big( \tfrac{1}{N} \widehat{ \eta}^{\,+}, \tfrac{1}{L} (\#_{0}
\eta, \ldots , \#_{A-1} \eta )\big)\,.
\end{aligned}
\end{equation*}
Moreover, we define 
\begin{equation*}
\begin{aligned}
\#_{>A} \eta := \sum_{i=1}^{L}\mathds{1}_{ \eta_{i}> A}\,,
\end{aligned}
\end{equation*}
which counts the number of non--zero elements in $\widehat{\eta}^{+}$.
As pointed out in the previous section, our estimates do not suffice to show convergence of the generators
 in the strong operator topology. Instead, the approximation holds only up to an error given in terms of
$\#_{>A} \eta$.

\begin{lemma}[Generator approximation]\label{lem_gen_est}
For every $L,N$ and $ \eta \in \Omega_{L,N}$, we have that for all $ f \in \mD
$ there exists a finite constant $ C_{f}$ (independent of $N $ and $L$) such
that 
\begin{equation*}
\begin{aligned}
\Big|\mathfrak{L}_{L,N} ( f \circ \embd) (\eta)
- 
\mL f ( \embd ( \eta))\Big|
\leqslant C_{f} \frac{\#_{>A} \eta}{ L}   + o_{f}(1) \,,
\end{aligned}
\end{equation*}
where $o_{f}(1)$ denotes a constant that vanishes in $\tdlim$, uniformly over $
\Omega_{L,N}$.
Here, $\mL f \in \mD $ is interpreted as the natural extension
from $ S$ to $ \overline{\nabla} \times S_{Y}$. 
\end{lemma}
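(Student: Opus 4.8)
The strategy is a direct computation: apply the particle-system generator $\mathfrak{L}_{L,N}$ to the composition $f \circ \embd$ for $f$ a product $f = \phi \cdot \psi$ with $\phi \in \mD_X$ and $\psi \in \mD_Y$, then Taylor-expand the finite differences $f(\embd(\eta^{i,j})) - f(\embd(\eta))$ to second order, and match the resulting expression to $\mL f(\embd(\eta)) = \mA_{\gamma(y),\theta(y)}f + \mG f$ evaluated at $y = \tfrac{1}{L}(\#_0\eta,\ldots,\#_{A-1}\eta)$. One groups the $L^2$ summand-pairs $(i,j)$ according to which phase the occupations $\eta_i,\eta_j$ lie in: both above $A$ (the "fast--fast" interactions), one above and one in the slow phase (the "fast--slow" and "slow--fast" interactions), and both in the slow phase (the "slow--slow" interactions, which are $O(L^{-2}\cdot L^2) = O(1)$ overall but, after multiplication by the $O(1/L)$ change in $\embd$, contribute only to $o_f(1)$ unless they change an $\#_k\eta$).

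\textbf{Fast--fast interactions.} For pairs with $\eta_i,\eta_j > A$, the jump $\eta \mapsto \eta^{i,j}$ only changes the $\widehat{\eta}^+$-component, shifting $\eta_i^+ \mapsto \eta_i^+ - 1$ and $\eta_j^+ \mapsto \eta_j^+ + 1$; it does not change any $\#_k\eta$ for $k \leqslant A-1$ provided $\eta_i > A+1$ (the boundary case $\eta_i = A+1$ is handled separately and contributes $O(\#_{>A}\eta / L)$ after noting $u_1(A+1) = 1 + O(\zeta_L)$). Using Assumption~\ref{ass}.(b), $u_\iota(n) = (n-A) + O(\zeta_L)$, the rates $u_1(\eta_i)u_2(\eta_j) \simeq \eta_i^+ \eta_j^+$, and a second-order Taylor expansion in the variable $x = \tfrac{1}{N}\widehat{\eta}^+$ (noting $\|\nabla^{\leqslant 2} f\|_\infty < \infty$ on $S$ and that there are at most $\#_{>A}\eta$ relevant indices), this block reproduces exactly the classical computation from \cite{CGG24}: it converges to $\mA_{\gamma(y),\theta(y)}\phi$ evaluated at $x$, with $\gamma(y) = (1 - \tfrac{1}{\rho}\sum_{k=0}^A k y_k)_+$ arising because $\sum_i \eta_i^+/N \to \gamma(y)$ as $\tfrac{N}{L}\to\rho$, and $\theta(y) = \sum_{k=0}^A (r_k - q_k) y_k$ arising as the net rate at which particles enter the fast phase — here the $\zeta_L = O(1/L)$ error is multiplied by $L^2$ pairs times $O(1/N)$ from the Taylor term, giving $O(1)$, so one must carefully account for the $\mO(L^{-1})$ scaling to land in $o_f(1)$; this is the point where the hypothesis $\zeta_L = \mO(L^{-1})$ is essential.

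\textbf{Mixed and slow interactions.} For pairs with $\eta_i \leqslant A < \eta_j$ or $\eta_j \leqslant A < \eta_i$, the slow rate $u_1(\eta_i) = q_{\eta_i}/L + O(\zeta_L/L)$ or $u_2(\eta_j) = r_{\eta_j}/L + O(\zeta_L/L)$ contributes the $\rho\,\gamma(y)$-prefactored drift terms of $\mG$ in \eqref{eq_G_IP}: a particle leaving a site with $\eta_i = k$ particles (for $1 \leqslant k \leqslant A$) decreases $\#_k\eta$ and increases $\#_{k-1}\eta$, occurring at total rate $\simeq q_k \cdot \#_k\eta \cdot \sum_j \eta_j^+ \simeq q_k (L y_k)(N\gamma(y))$; dividing the induced $\tfrac{1}{L}$-shift in $y$ and using $N/L \to \rho$ gives the $+\rho\gamma(y)\,q_{k+1}y_{k+1}$ and $-\rho\gamma(y)(q_k + r_k)y_k$ terms, and symmetrically a particle arriving at a slow site gives the $r_{k-1}y_{k-1}$ term. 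When this jump also moves mass out of the fast component (i.e. lands on or departs from a site near the threshold), there is an additional coupling term, but it is $O(\#_{>A}\eta / L)$ — precisely the point flagged in equation~\eqref{eq_supp1_dilemma}: the non-continuous observable $\#_{>A}\eta$ appears, and one cannot express it through $\embd(\eta)$, so it must be carried as the stated error. The slow--slow pairs change $\embd$ by $O(1/L)$ at total rate $O(1)$ (both rates are $O(1/L)$, $L^2$ pairs), hence contribute $O(1/L) = o_f(1)$. Finally, the product structure $f = \phi\psi$ and the chain rule split $\mathfrak{L}_{L,N}(f\circ\embd)$ into the $\phi$-derivative part (recovering $\mA_{\gamma,\theta}f$), the $\psi$-derivative part (recovering $\mG f$), and cross terms $\partial_{x_i}\partial_{y_k} f$ that come only from jumps simultaneously changing both components — again these are supported on the threshold sites and absorbed into $C_f \#_{>A}\eta/L$.

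\textbf{Main obstacle.} The delicate point is bookkeeping the error terms to confirm that everything not absorbed into $C_f\,\#_{>A}\eta / L$ is genuinely $o_f(1)$ uniformly over $\Omega_{L,N}$ — in particular, the fast--fast block carries $\zeta_L$-errors amplified by $L^2$ pairs, and the threshold/boundary cases ($\eta_i = A+1$, or jumps onto a site with exactly $A$ particles turning it fast) must each be shown to involve a factor $\#_{>A}\eta / L$ or $1/L$. One must also verify that $\mL f$, defined a priori on $S$, extends naturally and continuously to all of $\overline{\nabla}\times S_Y$ so the right-hand side makes sense for arbitrary $\eta$ whose embedding may not lie in $S$; this is routine since $\mA_{\gamma,\theta}$ and $\mG$ are polynomial in $x$ and $y$ with Lipschitz coefficients. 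I would organize the proof as: (1) reduce to $f = \phi\psi$ by linearity; (2) split the double sum into the four phase-blocks; (3) handle fast--fast via the $\mA_{\gamma,\theta}$ computation, tracking $\zeta_L L^2 / N = O(1/L)$ errors; (4) handle mixed blocks via the $\mG$ computation, isolating the threshold-coupling terms as $O(\#_{>A}\eta/L)$; (5) dismiss slow--slow as $O(1/L)$; (6) collect constants into $C_f$ depending only on $\sup$-norms of finitely many derivatives of $\phi,\psi$.
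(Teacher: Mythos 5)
Your overall strategy (split the double sum into the four phase blocks, Taylor-expand $f\circ\embd(\eta^{i,j})-f\circ\embd(\eta)$ to second order, match against $\mA_{\gamma,\theta}+\mG$, and carry $\#_{>A}\eta/L$ as the error) is the same as the paper's. However, there is a genuine gap in your bookkeeping of where the drift term $-\theta(y)\sum_i x_i\partial_{x_i}f$ of $\mA_{\gamma(y),\theta(y)}$ comes from. You claim the fast--fast block alone ``converges to $\mA_{\gamma(y),\theta(y)}\phi$'', with $\theta(y)=\sum_k(r_k-q_k)y_k$ ``arising as the net rate at which particles enter the fast phase.'' This cannot be right: fast--fast interactions never involve the slow-phase constants $q_k,r_k$, so no such term can emerge from that block. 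In the paper, the fast--fast block produces only the second-order (diffusion) part; its first-order contributions nearly cancel by the approximate symmetry $u_1\approx u_2$ in the fast phase, and establishing this cancellation up to an error of order $\zeta_L(1+\#_{>A}\eta)$ is one of the delicate steps (and one of the sources of the $C_f\,\#_{>A}\eta/L$ error). You do not mention this cancellation at all.

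Correspondingly, you under-count the mixed blocks. A slow-to-fast jump changes the landing coordinate $x_j$ by $1/N$, and the total rate of such jumps is of order $N$ (rate $\sim(q_k/L)\,\#_k\eta\cdot\sum_j\eta_j^+$), so the first-order $x$-derivative contribution of the mixed blocks is $O(1)$, not $O(\#_{>A}\eta/L)$ as you assert for your ``additional coupling term.'' In the paper these $O(1)$ terms are exactly what produce the drift: the slow-to-fast block gives $+\big(\sum_k q_k\tfrac{\#_k\eta}{L}\big)\sum_j\tfrac{(\eta_j-A)_+}{N}\partial_{x_j}f$, the fast-to-slow block gives the analogous term with $-r_k$, and their sum is $-\theta(\embd(\eta))\sum_i x_i\partial_{x_i}f$. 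As organized, your proof would either drop these $O(1)$ contributions or fail to generate the $\theta$-drift from the block you assign it to, so the expression would not match $\mL f(\embd(\eta))$. The fix is to redo the mixed-block computation keeping the first-order $x$-derivatives (as in the paper's slow-to-fast and fast-to-slow expansions) and to add the symmetry-cancellation argument for the first-order terms of the fast--fast block; the final replacement of $\gamma_N(\eta)$ and $\#_A\eta$ by functions of $\embd(\eta)$ then accounts for the remaining $C_f\,\#_{>A}\eta/L$ error, as you correctly anticipate.
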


The proof of the lemma is the content of Section~\ref{sec_conv_gen}.
In order for this estimate to
help us prove Theorem~\ref{thm_main}, we need $\#_{>A} \eta(t) = o(L)$ for 
almost all $t$.

\begin{proposition}[Instantaneous condensation]\label{prop_occ}
Let $ \big(\eta^{(L,N)} (t) \big)_{t \geqslant 0}$
be the inclusion process generated by $\mathfrak{L}_{L,N}$, with rates
satisfying Assumption~\ref{ass}. If started from $ \overline{\eta} =
\overline{\eta}^{(L,N)} \in
\Omega_{L,N}$ 
such that $ \lim_{\tdlim} \gamma_{N}( \overline{\eta}) \in [0,1]$,
then for every $ T \in (0, \infty)$ 
\begin{equation}\label{eq_inst_cond}
\begin{aligned}
 \lim_{\tdlim} \int_{0}^{T} \frac{\#_{>A} \eta^{(L,N)} (t)}{L} \ud t 
= 0 \,, 
\quad \text{in $ \mathbf{P}$-probability}\,,
\end{aligned}
\end{equation}
i.e.~the fraction of sites hosting particles in the fast phase vanishes
for almost every time.
\end{proposition}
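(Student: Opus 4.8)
The plan is to control the time--integral of $\#_{>A}\eta(t)/L$ by tracking how particles above the threshold accumulate onto a shrinking number of sites. The key observation is that once a particle is above $A$, it experiences the \emph{fast} dynamics: it jumps at rate $\sim\eta_i^+$ and, crucially, is \emph{attracted} to every other site $j$ with $\eta_j>A$ at rate $\sim\eta_i^+\eta_j^+$. I would make this precise by coupling the occupation of the fast phase with a system of interacting random walks. Concretely, consider the ordered fast--phase heights $\widehat\eta^+(t)$; the dynamics restricted to the fast phase is (up to the $o_f(1)$ perturbations from Assumption~\ref{ass} and the mass exchange across the threshold, which happens only at the slow rate $\sim L^{-1}$ per site and hence contributes $O(1)$ total rate) exactly an inclusion--type dynamics on $\sim N$ particles. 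For such a dynamics the number $\#_{>A}\eta$ of occupied fast sites is a supermartingale up to a small correction: fast sites merge at a total rate of order $\big(\#_{>A}\eta\big)\cdot\big(\text{typical }\eta^+\big)\cdot\big(\text{typical }\eta^+\big)/N \gtrsim \#_{>A}\eta$ (using that the $N^+:=\sum_i\eta_i^+$ particles sitting on $\#_{>A}\eta$ sites have average height $N^+/\#_{>A}\eta$), while new fast sites are created only by slow--rate jumps across the threshold, at total rate $O(1)$.

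The main technical step I would carry out is therefore a differential inequality of the form
\begin{equation*}
\frac{\ud}{\ud t}\,\mathbf{E}\!\left[\frac{\#_{>A}\eta(t)}{L}\right]\;\leqslant\;-\,c\,\mathbf{E}\!\left[\frac{\#_{>A}\eta(t)}{L}\right]\;+\;\frac{C}{L}
\end{equation*}
for constants $c,C>0$ depending only on $\rho$ and the rate constants in Assumption~\ref{ass}, valid for $t$ bounded away from $0$ in a way that degenerates as $t\downarrow 0$ (the height of a typical fast cluster is only guaranteed to be $\gtrsim t N$ after time $t$, so the merging rate per fast site is $\gtrsim$ its height, and a Grönwall/Gronwall-type argument gives $\mathbf{E}[\#_{>A}\eta(t)/L]\lesssim (tL)^{-1}+L^{-1}$). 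Integrating this bound over $[\epsilon,T]$ gives a contribution $o(1)$, and on $[0,\epsilon]$ one uses the crude bound $\#_{>A}\eta\leqslant\min\{L,N\}$ together with a second, complementary estimate: in a short time $\epsilon$ the number of particles that \emph{ever leave} the fast phase is $O(\epsilon L)$ in probability (each of the $\leqslant L$ sites emits across the threshold at rate $O(L^{-1})$), so that the fast--phase mass $N^+$ stays $\geqslant (\gamma-o(1))N$, and then a direct second--moment computation on $\sum_i(\eta_i^+)^2=N\varphi_2$ — whose drift under the fast inclusion dynamics is $\gtrsim \varphi_2^{3/2}$-ish, equivalently the drift of $1/\varphi_2$ is bounded — forces $\#_{>A}\eta(t)\leqslant N^+/\sqrt{N\varphi_2(t)}$ to be $o(L)$ for all $t\in[\delta,\epsilon]$; letting $\delta\downarrow 0$ and then $\epsilon\downarrow0$ closes the gap near the origin.

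I expect the \textbf{main obstacle} to be handling the small--time window $t\downarrow 0$ rigorously and uniformly in $L,N$: one must show that the fast clusters actually \emph{do} grow immediately (so that the merging rate is large), rather than the configuration lingering with many moderately--occupied fast sites. This is precisely the ``instantaneous condensation'' content, and it cannot be read off from a supermartingale argument alone — it requires an explicit lower bound on the coalescence rate, which is where the announced \emph{explicit coupling of random walks} enters. I would construct a coupling in which each particle above the threshold is tracked individually, two particles on distinct fast sites are merged the first time either jumps, and the jump attempts are generated by a Poisson clock whose rate dominates the true inclusion rate; comparing $\#_{>A}\eta(t)$ with the number of blocks in this coalescent then yields the needed bound $\mathbf{E}[\#_{>A}\eta(t)/L]=O((tL)^{-1})+O(L^{-1})$, uniformly. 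The remaining estimates (the across--threshold flux is $O(L^{-1})$ per site, the perturbations $\zeta_L=O(L^{-1})$ are negligible, Grönwall, and the final integration in $t$) are routine once this coupling is in place.
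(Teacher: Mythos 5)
There is a genuine gap, and it sits at the heart of your bookkeeping: the cross--threshold fluxes are miscounted by a factor of $L$. A site $j$ with $\eta_j=A$ becomes fast when it receives one particle, which happens at rate $u_2(A)\sum_i u_1(\eta_i)\approx \tfrac{r_A}{L}\,\gamma_N(\eta)N=O(1)$ \emph{per site} (the $O(L^{-1})$ slow rate is multiplied by the attraction of the entire fast phase), and in the supercritical regime a positive fraction of sites sits at level $A$ (cf.\ $\overline{y}$ in \eqref{eq_productmeas}), so new fast sites are created at total rate $\Theta(L)$, not $O(1)$; similarly a slow site emits at rate $u_1(k)\sum_j u_2(\eta_j)=O(1)$, not $O(L^{-1})$. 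Consequently your central differential inequality $\tfrac{\ud}{\ud t}\mathbf{E}[\#_{>A}\eta(t)/L]\leqslant -c\,\mathbf{E}[\#_{>A}\eta(t)/L]+C/L$ cannot hold (the source term is $O(1)$, not $O(L^{-1})$), and the target bound $\mathbf{E}[\#_{>A}\eta(t)/L]\lesssim (tL)^{-1}+L^{-1}$ is not the mechanism behind the proposition: the fast--site density vanishes not because fast sites are rarely created, but because each visit to the fast phase is short. Note also that $\#_{>A}\eta$ decreases only when a site at level $A+1$ sheds its single excess particle (there is no one--step ``merging'' of fast sites), and relating the destruction rate to $\#_{>A}\eta$ itself requires control of the whole height profile, which a supermartingale for $\#_{>A}\eta$ alone does not provide.

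The paper's route is different and avoids this: first an a priori estimate (Lemma~\ref{lem_apriori}) showing that $\gamma_N(\eta(t))$ stays uniformly bounded below on $[0,T]$ with high probability (this is needed for \emph{all} rate lower bounds, whereas you invoke mass conservation only on a short initial window; the degenerate case $\gamma_N(\overline\eta)\to 0$ is handled separately by the trivial bound $\#_{>A}\eta\leqslant \gamma_N(\eta)N$). Then, conditionally on $\inf_{[0,T]}\gamma_N>\delta$, a single tagged site is compared to a homogeneous random walk (Lemma~\ref{lem_coupling}, Corollary~\ref{cor_coupling}): above $A$ the site jumps at total rate $\geqslant aL$ with an almost symmetric embedded walk, and at level $A$ it waits at most an $\mathrm{Exp}(c'')$ time before re-entering, so excursions above $A$ have vanishing duration; this is delicate because the symmetric return time has infinite mean, so the paper controls $\int_0^T\mathds{1}_{\eta_1(t)>A}\ud t$ in probability via the renewal structure rather than through an expected--drift bound (Lemma~\ref{lem_first_site}). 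Finally, exchangeability on the complete graph converts the single--site statement into \eqref{eq_inst_cond}. Your closing paragraph does gesture at a per--particle random--walk coupling, but only as a patch near $t=0$ and still feeding into the erroneous flux count, so the argument as proposed does not close.
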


The proof of the proposition is deferred to Section~\ref{sec_occ}.
Note that if we were given the convergence in Theorem~\ref{thm_main}, then
Proposition~\ref{prop_occ} would be an immediate consequence of
Corollary~\ref{cor_concentration}. In fact, this is how instantaneous
condensation of the inclusion process for $A=0$ was derived in \cite{CGG24}.
To our best knowledge, this is the first proof that shows directly
the instantaneous condensation of mass in the inclusion process, when taking
the thermodynamic limit.
It would be of interest to generalise the proof to a
general class of condensing stochastic particle systems.
\\

Combining the two key ingredients presented above,
we are now ready to conclude our main result: The convergence of the inclusion
process with a non--trivial slow phase to the modulated Poisson--Dirichlet
diffusion.

\begin{proof}[Proof of Theorem~\ref{thm_main}]

Let $(x,y) \in S $ and  $ \eta^{(L,N)}(0) \in \Omega_{L,N}$ be a sequence of
initial conditions such that $ \embdcomb(
\eta^{(L,N)}(0)) \to (x, y)$.
Following \cite[Theorem~3.9.4]{EK_book} in combination with \cite[Theorem~3.9.1]{EK_book},
tightness of the sequence of processes $\big(\embd( \eta^{(L,N)})\big)_{N,L}$
in $ D ([0, \infty), \overline{\nabla} \times S_{Y})$  holds, if
\begin{equation}\label{e_tight_supp}
\begin{aligned}
\limsup_{\tdlim} \mathbf{E}\Big[
\sup_{ t \in [0,T]} \big| \mathfrak{L}_{L,N} ( f \circ \embd) \big(  \eta^{
(L,N)}(t)\big)\big|
\Big]< \infty\,, \quad \forall f \in \mD \,,\ T \in (0, \infty) \,,
\end{aligned}
\end{equation}
since $ \eta^{(L,N)}$ is the unique solution of the martingale problem
for  $\mathfrak{L}_{L,N}$.
As a consequence of Lemma~\ref{lem_gen_est}, \eqref{e_tight_supp} is equivalent to 
\begin{equation}\label{e_tight_supp2}
\begin{aligned}
\limsup_{\tdlim} \mathbf{E}\Big[
\sup_{ t \in [0,T]} \big| \mL  f \big(\embd \big(  \eta^{(L,N)}(t)\big)\big)\big|
\Big]< \infty\,, \quad \forall f \in \mD \,,\ T \in (0, \infty) \,,
\end{aligned}
\end{equation}
which holds because $ \mL f \in C ( \overline{\nabla}\times S_{Y})$, and thus $ \| \mL f \|_{\infty}< \infty$. 
Hence, $\big(\embd( \eta^{(L,N)})\big)_{N,L}$ is tight in $ D([0,
\infty), \overline{\nabla}\times S_{Y})$ and admits
 subsequential limits. 

Without loss of generality, let us  assume 
\begin{equation}\label{eq_conv}
\begin{aligned}
\big(\embd( \eta^{(L,N)}(t))\big)_{t \geqslant 0} \weakconv (X (t), Y (t))_{t \geqslant 0}\,, \qquad
\text{in }\ D ([0, \infty), \overline{\nabla} \times S_{Y})\,,
\end{aligned}
\end{equation}
for some random accumulation point $ (X,Y) \in D ([0, \infty), \overline{\nabla}\times
S_{Y})$.
In fact, $ (X, Y ) \in D ([0, \infty), S)$, because $ \lim_{\tdlim}  \embd(
\eta^{(L,N)}) \in S $ by definition, for any accumulation point.
It only remains to identify $(X,Y)$ as the modulated Poisson--Dirichlet
diffusion.
For this, we again leverage on the  martingale problem for $\mL $: 
First, we notice that 
\begin{equation*}
\begin{aligned}
t \mapsto
M_{L,N}^{ f} (t):=
f \big(
\embd( \eta^{ (L,N)}(t))
\big)
-
\int_{0}^{t} 
\mathfrak{L}_{L,N}(f 
\circ \embd)( \eta^{ (L,N)}(s))
\ud s
\end{aligned}
\end{equation*}
is a martingale for every $N,L$.
Next, 
we
replace  $\mathfrak{L}_{L,N}$ with $ \mL$ up to the error--term from Lemma~\ref{lem_gen_est}, such that
\begin{equation*}
\begin{aligned}
&
 \mathbf{E} \Big[
\sup_{ t \in [0,T]} 
\Big|
f \big(
\embd( \eta^{(L,N)}(t))
\big)
-
\int_{0}^{t} 
\mL f 
\big( \embd ( \eta^{ (L,N)}(s))\big)
\ud s
-
M_{L,N}^{f}(t) 
\Big|
\Big] \\
&\qquad 
\leqslant C_{f}
 \mathbf{E} \Big[
\int_{0}^{T} \frac{\#_{>A} \eta^{(L,N)}(s)}{L}  \ud s
\Big] +  o_{f}(1) T \to 0
\,,
\end{aligned}
\end{equation*}
as $ \tdlim$, 
where the convergence in the last step is a consequence of Proposition~\ref{prop_occ}.
Thus, by \eqref{eq_conv}
\begin{equation*}
\begin{aligned}
M^{f}(t):=\lim_{ \tdlim} 
M_{L,N}^{f}(t) 
=
f \big(
X(t), Y(t)
\big)
-
\int_{0}^{t} 
\mL f \big(
X(s), Y(s)\big)
\ud s\,,
\end{aligned}
\end{equation*}
in the (probabilistically) weak sense.
Lastly, $ M^{f}_{L,N}(t)$ is uniformly bounded for every $t$, therefore, the limit
$M^{f}$ is a martingale, using the dominated convergence theorem.
Hence, $(X,Y)$ is the unique solution of the martingale problem for $\mL$, 
 which
identifies it as the modulated Poisson--Dirichlet diffusion, see Lemma~\ref{lem_hierarchy}.
\end{proof}

\begin{remark}\label{rem_generalisation_rates}
Extending the proof of Theorem~\ref{thm_main} to more general choices of $
\zeta_{L}$ in Assumption~\ref{ass}, beyond the order $ \mO (L^{-1})$, will
require a more precise estimate than that
provided in  Proposition~\ref{prop_occ}.
More precisely, if $ L^{-1} \ll \zeta_{L} = o(1) $, the bound in
Lemma~\ref{lem_gen_est} will be expressed in terms of $ \zeta_{L} \#_{>A} \eta$.
To address this, it would be necessary to show that
\begin{equation*}
\begin{aligned}
 \lim_{\tdlim} \zeta_{L}\int_{0}^{T}  \#_{>A} \eta^{(L,N)} (t) \ud t 
= 0 \,, 
\quad \text{in $ \mathbf{P}$-probability}\,.
\end{aligned}
\end{equation*}
\end{remark}

\begin{remark}\label{rem_gen_conv}
In Theorem~\ref{thm_main}, we prove convergence of embedded
Markov processes, generated by $ \mathfrak{L}_{L,N}$, to a Feller process on $ S$
with generator $ \mL$. 
However, as mentioned below \eqref{eq_supp1_dilemma}, we are unable to prove convergence of the
corresponding generators directly. 
This is surprising, as the two statements are equivalent according to the Trotter--Kurtz
convergence theorem, see e.g.~\cite[Theorem~17.25]{Kallenberg2021}.
Currently, we do not have a satisfactory explanation for why it is not possible
to prove the convergence of generators directly. Although, we expect
this to be related to the instantaneous condensation and ``projection``
 of the limiting process from $ \overline{\nabla}$ onto $ \nabla$ for all
positive times, which is precisely the missing probabilistic ingredient to
conclude the main result from Lemma~\ref{lem_gen_est}.

Encountering similar difficulties, the authors of \cite{CBERS17} 
were also unable to exploit the explicit form of the generator 
to prove the convergence of a Wright–Fisher diffusion model to the 
two-parameter Poisson–Dirichlet diffusion, albeit for different reasons. 
Instead, they also established convergence using the martingale problem formalism.
\end{remark}

\section{Instantaneous condensation}\label{sec_occ}

In this section, we prove Proposition~\ref{prop_occ}. 
First, we will present the two key ingredients, Lemma~\ref{lem_apriori} and
Lemma~\ref{lem_first_site}, and apply them to conclude the proposition.
The remainder of the section is then devoted to the proofs of the two lemmas.

The first step consists of a priori estimates for
\begin{equation*}
\begin{aligned}
\gamma_{N} ( \eta)
:=
\sum_{i=1}^{L} \frac{ \eta^{+}_{i}}{N}
=
1 
- \frac{L}{N} \sum_{k =0}^{A} k \frac{\#_{k} \eta}{ L} 
 \,,
\end{aligned}
\end{equation*}
namely  the relative mass of particles
in the fast phase.
The following 
lemma establishes that if the initial mass in the fast phase satisfies $\gamma > 0$, 
it remains bounded away from zero at all times with high probability. 
Conversely, if the fast phase is initially empty, it remains empty at all times.

\begin{lemma}\label{lem_apriori}
Let $ \overline{\eta} = \overline{\eta}^{(L,N)}\in \Omega_{L,N}$ be a sequence
of initial conditions. 
\begin{enumerate}
\item[(i)] If $ \liminf_{ \tdlim} \gamma_{N} ( \overline{\eta}) >0$, then 
for every $ T>0$ there exists $ \delta \in (0,1)$ such that 
\begin{equation*}
\begin{aligned}
\lim_{\tdlim} \mathbf{P}\Big(
\inf_{t \in [0,T]} \gamma_{N}( \eta (t)) \leqslant \delta
\Big)=0
\,.
\end{aligned}
\end{equation*}

\item[(ii)]  If $ \limsup_{ \tdlim} \gamma_{N} ( \overline{\eta}) =0$, then for every
$T>0$ and $ \delta>0$
\begin{equation*}
\begin{aligned}
\lim_{\tdlim} \mathbf{P}\Big(
\sup_{t \in [0,T]} \gamma_{N}( \eta (t)) \geqslant  \delta
\Big)=0
\,.
\end{aligned}
\end{equation*}

\end{enumerate}
\end{lemma}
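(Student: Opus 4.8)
The plan is to track the scalar observable $\gamma_{N}(\eta)=\tfrac1N\sum_{i=1}^{L}\eta_{i}^{+}$ through Dynkin's formula for the finite chain $\eta(\cdot)$ and to convert a priori drift and quadratic-variation estimates into pathwise bounds. The first step is an exact computation of $\mathfrak{L}_{L,N}\gamma_{N}$: a transition $\eta\mapsto\eta^{i,j}$ shifts $N\gamma_{N}$ by $\mathds{1}_{\{\eta_{j}\geqslant A\}}-\mathds{1}_{\{\eta_{i}>A\}}\in\{-1,0,1\}$, so separating the double sum according to whether the donor and receiver lie in the slow ($\leqslant A$) or the fast ($>A$) phase yields, exactly, $N\,\mathfrak{L}_{L,N}\gamma_{N}(\eta)=P_{1}(\eta)\sum_{j}u_{2}(\eta_{j})-P_{2}^{-}(\eta)\sum_{i}u_{1}(\eta_{i})$, where $P_{1}(\eta)=\sum_{k=1}^{A}u_{1}(k)\#_{k}\eta$ and $P_{2}^{-}(\eta)=\sum_{k=0}^{A-1}u_{2}(k)\#_{k}\eta$ are nonnegative and of order $O(1)$ by Assumption~\ref{ass}. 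Splitting $\sum_{i}u_{1}(\eta_{i})=P_{1}(\eta)+S_{1}(\eta)$ and $\sum_{j}u_{2}(\eta_{j})=P_{2}^{-}(\eta)+u_{2}(A)\#_{A}\eta+S_{2}(\eta)$ into slow and fast parts, with $S_{1}(\eta),S_{2}(\eta)=N\gamma_{N}(\eta)\bigl(1+O(\zeta_{L})\bigr)$ carrying \emph{no} additive constant, the $O(1)$ cross-terms combine into $P_{1}(\eta)\,u_{2}(A)\#_{A}\eta\geqslant0$ while every remaining term is proportional to $N\gamma_{N}(\eta)$; hence, for $L$ large, $-c_{1}\gamma_{N}(\eta)\leqslant\mathfrak{L}_{L,N}\gamma_{N}(\eta)\leqslant c_{1}\bigl(\gamma_{N}(\eta)+\tfrac1N\bigr)$, the left inequality crucially \emph{without} an additive $O(1/N)$ — $\gamma_{N}$ decreases only when a particle leaves a site of occupation $>A$ for one below $A$, at a rate $\propto N\gamma_{N}(\eta)$, while its increase is nonnegative. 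A parallel computation, using that $\bigl(\mathds{1}_{\{\eta_{j}\geqslant A\}}-\mathds{1}_{\{\eta_{i}>A\}}\bigr)^{2}$ is the indicator that $i,j$ straddle the threshold, bounds the carré du champ by $\Gamma(\gamma_{N})(\eta)\leqslant\tfrac{c_{1}}{N}\bigl(\gamma_{N}(\eta)+\tfrac1N\bigr)$.

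Next I would set $M(t):=\gamma_{N}(\eta(t))-\gamma_{N}(\overline{\eta})-\int_{0}^{t}\mathfrak{L}_{L,N}\gamma_{N}(\eta(s))\,\ud s$, a bounded (hence square-integrable) martingale. A Grönwall estimate for $g(t):=\mathbf{E}[\gamma_{N}(\eta(t))]$, based on the two-sided drift bound, gives $\sup_{t\leqslant T}g(t)\leqslant C_{T}$ with $C_{T}$ independent of $L,N$; Doob's $L^{2}$-inequality together with the carré du champ bound then yields $\mathbf{E}\bigl[\sup_{t\leqslant T}|M(t)|^{2}\bigr]\leqslant4\,\mathbf{E}\bigl[\int_{0}^{T}\Gamma(\gamma_{N})(\eta(s))\,\ud s\bigr]\leqslant\tfrac{4c_{1}T}{N}\bigl(C_{T}+\tfrac1N\bigr)\to0$ as $\tdlim$, so $\varepsilon_{N}:=\sup_{t\leqslant T}|M(t)|\to0$ in $\mathbf{P}$-probability.

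For part~(ii) I would feed the upper drift bound into the identity $\gamma_{N}(\eta(t))=\gamma_{N}(\overline{\eta})+\int_{0}^{t}\mathfrak{L}_{L,N}\gamma_{N}(\eta(s))\,\ud s+M(t)$ to get $\gamma_{N}(\eta(t))\leqslant\bigl(\gamma_{N}(\overline{\eta})+\tfrac{c_{1}T}{N}+\varepsilon_{N}\bigr)+c_{1}\int_{0}^{t}\gamma_{N}(\eta(s))\,\ud s$ for $t\leqslant T$, whence forward Grönwall yields $\sup_{t\leqslant T}\gamma_{N}(\eta(t))\leqslant\bigl(\gamma_{N}(\overline{\eta})+\tfrac{c_{1}T}{N}+\varepsilon_{N}\bigr)e^{c_{1}T}$, which tends to $0$ in probability since $\gamma_{N}(\overline{\eta})\to0$ by hypothesis. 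For part~(i) the corresponding reverse Grönwall estimate is false pointwise — the drift bound alone permits exponential decay of $\gamma_{N}$ — so I would instead exploit the exact martingale decomposition: by the lower bound $\mathfrak{L}_{L,N}\gamma_{N}+c_{1}\gamma_{N}\geqslant0$, the process $Z(t):=e^{c_{1}t}\gamma_{N}(\eta(t))$ is a nonnegative submartingale, $Z(t)=Z(0)+A(t)+\widetilde M(t)$ with $A$ nondecreasing and $\langle\widetilde M\rangle_{T}\leqslant e^{2c_{1}T}\langle M\rangle_{T}$, so $\sup_{t\leqslant T}|\widetilde M(t)|\to0$ in probability. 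Dropping the nondecreasing part, $\inf_{t\leqslant T}\gamma_{N}(\eta(t))\geqslant e^{-c_{1}T}\inf_{t\leqslant T}Z(t)\geqslant e^{-c_{1}T}\bigl(\gamma_{N}(\overline{\eta})-\sup_{t\leqslant T}|\widetilde M(t)|\bigr)$; since $\liminf_{\tdlim}\gamma_{N}(\overline{\eta})=:c_{0}>0$, the choice $\delta:=\tfrac14c_{0}e^{-c_{1}T}$ makes $\{\inf_{t\leqslant T}\gamma_{N}(\eta(t))\leqslant\delta\}$ contained, for $L$ large, in $\{\sup_{t\leqslant T}|\widetilde M(t)|\geqslant\tfrac12c_{0}\}$, an event of vanishing probability.

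The main obstacle is the generator computation, and within it the sharp lower bound $\mathfrak{L}_{L,N}\gamma_{N}\geqslant-c_{1}\gamma_{N}$ free of an additive $O(1/N)$ term: keeping the slow- and fast-phase contributions of $\sum u_{1},\sum u_{2}$ separate — rather than lumping them into a single $O(1)$ error — is precisely what turns $e^{c_{1}t}\gamma_{N}(\eta(t))$ into a genuine submartingale and so lets part~(i) bypass the false reverse Grönwall bound. The companion estimate $\Gamma(\gamma_{N})=O(\gamma_{N}/N+1/N^{2})$, resting on the fact that the total rate of $\gamma_{N}$-changing transitions is $O(N\gamma_{N}+1)$ rather than $O(N^{2})$, is the other quantitative input that has to be gotten right.
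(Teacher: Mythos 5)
Your proposal is correct and follows essentially the same route as the paper's proof: Dynkin's formula applied to $\gamma_{N}$, two-sided drift bounds of the form $-c\,\gamma_{N}\leqslant \mathfrak{L}_{L,N}\gamma_{N}\leqslant c\,(\gamma_{N}+\tfrac1N)$, a carré du champ of order $O(1/N)$ so that $\sup_{t\leqslant T}|M_{L,N}(t)|\to 0$ by Doob's inequality, Grönwall for part~(ii), and the exponentially compensated process $e^{ct}\gamma_{N}(\eta(t))$ for part~(i). The differences are only organisational — you phrase (i) as a submartingale decomposition rather than conditioning on $\{\sup_{t\leqslant T}|M_{L,N}(t)|\leqslant\varepsilon\}$, and you obtain the drift bounds from an exact product identity instead of the paper's increment formula \eqref{eq_gam_diff}, with slightly more careful tracking of the additive $O(1/N)$ terms, none of which changes the argument.
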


The proof of Lemma~\ref{lem_apriori} is deferred to Section~\ref{sec_aprio}.
Because the dynamics in the modulated PD diffusion are driven by $ \gamma$, 
the a priori estimate in (i) guarantees that the limiting inclusion dynamics are
sufficiently strong to drive the exchange of particles in the system. 
In light of this event,
we can conclude that almost every site instantly repels any
excess particles in the fast phase that it may have started with.

\begin{lemma}\label{lem_first_site}
For every $M \in \mathbf{N}$,
 $ T \in (0, \infty)$ and $ \delta >0$
\begin{equation*}
\begin{aligned}
\lim_{\tdlim} 
\sup_{ \substack{ \overline{\eta}\in \Omega_{L,N}\\ \overline{\eta}_{1}
\leqslant M\,, \ \gamma_{N}( \overline{\eta}  ) > \delta}}
\mathbf{E}_{ \overline{\eta}}
\bigg[
\int_{0}^{T} \mathds{1}_{\eta_{1} (t)>A} \ud t
\bigg\vert
 \inf_{t \in [0,T]} \gamma_{N}( \eta (t)) > \delta
\bigg] =0\,,
\end{aligned}
\end{equation*}
where $ ( \eta (t) )_{t \geqslant 0} =\big(\eta^{(L,N)} (t) \big)_{t \geqslant
0}$ is the inclusion process started from $ \overline{\eta}$ generated by  $\mathfrak{L}_{L,N}$, with rates
satisfying Assumption~\ref{ass}.
\end{lemma}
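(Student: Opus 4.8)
The plan is to track the single coordinate $t\mapsto\eta_1(t)$ and to show that, under the conditional law $\mathbf{P}(\,\cdot\mid G)$ with $G:=\{\inf_{t\le T}\gamma_N(\eta(t))>\delta\}$, it performs only $\mathcal{O}(1)$ excursions above the threshold $A$ during $[0,T]$, each of expected length $\mathcal{O}(N^{-1/2})$; the product of these two contributions gives a bound vanishing uniformly over the admissible $\overline{\eta}$. The input is a pair of deterministic rate bounds valid throughout $[0,T]$ on $G$: writing $S_\iota(t):=\sum_{i\ne 1}u_\iota(\eta_i(t))$, Assumption~\ref{ass} together with $\zeta_L=\mathcal{O}(L^{-1})$ and $\gamma_N(\eta(t))>\delta$ yields
\[
N\delta-\eta_1^{+}(t)-o(N)\;\le\;S_\iota(t)\;\le\;N+o(N),\qquad \iota=1,2,\qquad |S_2(t)-S_1(t)|=\mathcal{O}(1).
\]
Consequently, when $\eta_1(t)=A+m$ with $1\le m\le N\delta/2$, the upward jump rate of $\eta_1$ is $u_1(A+m)S_2=(m+o(1))S_2$ and the downward rate is $u_2(A+m)S_1=(m+o(1))S_1$, so the total rate is at least $c\,mN\delta$ for $N$ large (some $c=c(\delta)>0$) and the embedded upward probability is $\tfrac12+\mathcal{O}((N\delta)^{-1})$; while for $\eta_1(t)=A$ the upward rate $u_1(A)S_2\le(q_A+o(1))(N+o(N))/L\le 2\rho\,q_A$.

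From this I would derive the two estimates. \emph{Excursion length.} Let $\sigma$ be a time at which $\eta_1$ jumps $A\to A+1$ (or $\sigma=0$ if $\eta_1(0)>A$, starting from some $A+m_0$ with $m_0\le M$), and run $\eta_1$ until it first returns to $A$ or until $\eta_1-A$ reaches $N\delta/2$. Using the rate bounds I would couple the embedded jump chain of $(\eta_1-A)$ with a nearest-neighbour walk on $\mathbf{Z}$ started at $m_0$ with a \emph{fixed} upward probability $p^{\star}=\tfrac12+C/(N\delta)$, so that the number $\kappa$ of embedded jumps before $\eta_1$ returns to $A$ is stochastically dominated by the hitting time of $0$ by that walk; standard random-walk estimates give $\mathbf{P}(\kappa>k)\lesssim m_0k^{-1/2}+(N\delta)^{-1}$ for $k\le NT\delta$, and a gambler's-ruin bound gives probability $\mathcal{O}((N\delta)^{-1})$ that $\eta_1-A$ ever reaches $N\delta/2$ before returning. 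Since each holding time has conditional mean $\le C/(N\delta)$, the length $L_\sigma$ of the excursion satisfies
\[
\mathbf{E}\big[\,L_\sigma\wedge T\;\big|\;\mathcal{F}_\sigma\,\big]\;\le\;\frac{C}{N\delta}\,\mathbf{E}\big[\kappa\wedge(NT\delta)\,\big|\,\mathcal{F}_\sigma\big]+T\,\mathbf{P}\big(\eta_1-A\text{ reaches }\tfrac{N\delta}{2}\big)\;\lesssim\;\sqrt{\tfrac{T}{N\delta}}+\tfrac{T}{N}=:\varepsilon_N,
\]
which tends to $0$ as $\tdlim$, uniformly over the starting configuration (constants depending on $M,\delta$). \emph{Number of excursions.} The counting process of upward jumps $A\to A+1$ of $\eta_1$ has compensator $\int_0^t\mathds{1}_{\eta_1(s)=A}\,u_1(A)S_2(s)\,\ud s\le 2\rho\,q_A\,t$, so the number $K$ of excursions meeting $[0,T]$ obeys $\mathbf{E}[K]\le 1+2\rho\,q_A\,T=:C_T$.

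Combining, $\int_0^T\mathds{1}_{\eta_1(t)>A}\,\ud t=\sum_{i\le K}\big(L_{\sigma_i}\wedge(T-\sigma_i)\big)\le\sum_{i\ge 1}(L_{\sigma_i}\wedge T)\mathds{1}_{\sigma_i\le T}$; conditioning on $\mathcal{F}_{\sigma_i}$ in each summand and inserting the excursion bound gives $\mathbf{E}\big[\int_0^T\mathds{1}_{\eta_1>A}\,\ud t\big]\le\varepsilon_N\sum_{i\ge 1}\mathbf{P}(\sigma_i\le T)=\varepsilon_N\,\mathbf{E}[K]\le\varepsilon_N C_T\to 0$, uniformly over $\overline{\eta}$ with $\overline{\eta}_1\le M$ and $\gamma_N(\overline{\eta})>\delta$, which is the assertion.

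The main obstacle is carrying this out under $\mathbf{P}(\,\cdot\mid G)$ rather than $\mathbf{P}$. Since $\mathbf{E}[\,\cdot\mid G]=\int_0^T\mathbf{P}(\eta_1(t)>A\mid G)\,\ud t$ and conditioning on $G$ is a Doob $h$-transform with $h_t(\eta)=\mathbf{P}_\eta(\inf_{s\le T-t}\gamma_N(\eta(s))>\delta)$, the bound $\gamma_N(\eta(t))>\delta$ — hence the inequalities on $S_1,S_2$ — is retained for free; what needs work is that the $h$-transform multiplies each rate by $h_t(\eta')/h_t(\eta)$ with $\eta'$ obtained from $\eta$ by one particle move, and one must show these ratios stay bounded away from $0$ and $\infty$ (away from the boundary of $\{\gamma_N>\delta\}$) so that the downward-rate lower bound and the upward-rate upper bound survive; since a particle move changes $\gamma_N$ by at most $1/N$, this is plausible via a short coupling/mixing estimate for $h_t$. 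Alternatively one runs the whole argument on the process killed at $\tau_G:=\inf\{t:\gamma_N(\eta(t))\le\delta\}$, where the rate bounds hold verbatim up to $\tau_G$ and killing only shortens excursions, and then controls the leftover (the post-$\tau_G$ contribution and the factor $\mathbf{P}(G)^{-1}$, harmless in the regime where the lemma is applied, where $\gamma_N(\overline\eta)$ is bounded away from $\delta$ and $\mathbf{P}(G)\to 1$ by Lemma~\ref{lem_apriori}). The remaining delicate point, internal to the random-walk step, is making precise the coupling between the genuinely $N$-dependent, configuration-dependent birth–death rates of $\eta_1$ and the comparison walk, and discarding macroscopically large excursions; these are quantitative but routine once the order-$N$ lower bound on the downward rate is in hand.
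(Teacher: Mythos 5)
Your excursion analysis of the coordinate $\eta_1$ is essentially the paper's: the same rate bounds (total rate of order $N\delta$ above $A$, embedded up--probability $\tfrac12+\mathcal{O}(N^{-1})$ there, bounded reinjection rate at $A$), a domination of the excursion lengths by a nearly symmetric fast walk, and a count of $\mathcal{O}(1)$ excursions per unit time; your quantitative $\sqrt{T/(N\delta)}$ bound replaces the paper's tightness-of-hitting-time plus Gamma-scaling argument, and that part is fine modulo routine coupling details. The genuine gap is exactly the point you flag as ``the main obstacle'' and then leave unresolved: the whole estimate must be carried out under $\mathbf{P}(\cdot\mid G)$ with $G=\{\inf_{t\le T}\gamma_N(\eta(t))>\delta\}$, and neither of your two sketches closes this. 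For the $h$-transform route, boundedness of the ratios $h_t(\eta')/h_t(\eta)$ away from $0$ and $\infty$ would not even suffice: a bounded but non-vanishing tilt of the up/down rates could shift the embedded up-probability of $\eta_1$ to $\tfrac12+\epsilon_0$ for a fixed $\epsilon_0>0$, in which case an excursion fails to return to $A$ with probability of order $\epsilon_0$ and the conditional occupation time above $A$ stays of order $T$; you would need the tilt to be $1+o(1)$, which is neither proved nor clear near the boundary of $\{\gamma_N>\delta\}$, where $h_t$ can be extremely sensitive to a single particle move even though $\gamma_N$ changes only by $1/N$.

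The killed-process alternative has a different defect: it bounds the conditional expectation by the unconditional (killed) one times $\mathbf{P}(G)^{-1}$, and you dismiss this factor by appealing to ``the regime where the lemma is applied, where $\gamma_N(\overline{\eta})$ is bounded away from $\delta$ and $\mathbf{P}(G)\to1$''. But the statement you are asked to prove takes a supremum over \emph{all} $\overline{\eta}$ with $\gamma_N(\overline{\eta})>\delta$, including configurations with $\gamma_N(\overline{\eta})$ arbitrarily close to $\delta$, for which Lemma~\ref{lem_apriori} gives no lower bound on $\mathbf{P}(G)$ at this fixed $\delta$; so this route proves a strictly weaker statement than the lemma (and the paper's proof of Proposition~\ref{prop_occ} does invoke the supremum as stated). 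The missing idea, which is the actual content of the paper's Lemma~\ref{lem_coupling} and Corollary~\ref{cor_coupling}, is a pathwise construction: one builds, jump by jump from the trajectory of $\eta_1$, a homogeneous comparison walk $\mZ$ (up-probability $\tfrac12+15\zeta_L$, holding rate $c'_L=aL$ above $A$, rate $c''$ at $A$) such that on the event $G$ every excursion of $\eta_1$ above $A$ is shorter than the corresponding excursion of $\mZ$ and every sojourn at level $A$ is longer; this transfers the conditional occupation-time bound directly to an unconditioned, explicit renewal analysis of $\mZ$, with no $h$-transform and no $\mathbf{P}(G)^{-1}$, uniformly over the admissible initial data. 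Without that coupling (or an honest proof of the $1+o(1)$ tilt bound), your argument does not yield the stated uniform limit.
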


The proof of Lemma~\ref{lem_first_site} is a consequence of comparing $
\eta_{1}(t)$ to a
continuous--time random walk, which is subject of Sections~\ref{sec_first_site} and
\ref{sec_coup}.\\

Equipped with the statements of Lemmas~\ref{lem_apriori} and \ref{lem_first_site}, we are ready to
provide a proof of Proposition~\ref{prop_occ}, based on a simple symmetrisation
argument.

\begin{proof}[Proof of Proposition~\ref{prop_occ}]
First, we consider the case $ \lim_{ \tdlim} \gamma_{N} ( \overline{\eta})
=0$. Then we can simply bound $\#_{>A} \eta$ by the total numbers of particles
in the fast phase $ \gamma_{N}( \eta) N$, such that
\begin{equation*}
\begin{aligned}
\int_{0}^{T} \frac{\#_{>A} \eta (t)}{L} \ud t 
\leqslant 
T \frac{N}{L} \sup_{t \in [0,T]} \gamma_{N} ( \eta (t))
\to 0 \,, \quad \text{in } \mathbf{P}-\text{probability}\,,
\end{aligned}
\end{equation*}
accordingly to Lemma~\ref{lem_apriori}(ii).\\

On the other hand, when  $ \lim_{ \tdlim} \gamma_{N} ( \overline{\eta})
> 0$, we make use of the fact that the inclusion process $ (\eta (t))_{t \geqslant 0}$ if fully symmetric, due to the underlying
complete graph. 
Consequently, 
\begin{equation*}
\begin{aligned}
\mathbf{P}_{ \overline{\eta}}  \big( \eta_{i} (t)>A\big)
=
\mathbf{P}_{ \overline{\eta}} \big( \eta_{\sigma_{i}(1)} (t)>A\big)
=
\mathbf{P}_{ \overline{\eta}_{\sigma_{i}}} \big( \eta_{1} (t)>A\big)\,,
\end{aligned}
\end{equation*}
where
 $ \sigma_{i} =(1,i) = \sigma_{i}^{-1} $ is the permutation on
$\{1,\ldots,L\}$ that
switches $i$ and $1$. 

Therefore, for every $ t \geqslant 0 $ we can write 
\begin{equation*}
\begin{aligned}
\mathbf{E}_{ \overline{\eta}}
\bigg[
\frac{\#_{> A} \eta (t)}{L} 
\bigg]
&=
\frac{1}{L}
\sum_{i =1}^{L}
\mathbf{P}_{ \overline{\eta}} \big( \eta_{i} (t)>A\big)
=
\frac{1}{L}
\sum_{i =1}^{L}
\mathbf{P}_{ \overline{\eta}_{\sigma_{i}}} \big( \eta_{1} (t)>A\big)\,.
\end{aligned}
\end{equation*}
Together with Fubini's theorem, this implies
\begin{equation}\label{eq_symm_supp3}
\begin{aligned}
\mathbf{E}_{ \overline{\eta}}\bigg[
\int_{0}^{T} 
\frac{\#_{> A} \eta (t)}{L} 
\ud t
\bigg]
=
\frac{1}{L}  \sum_{ i=1}^{L}
\mathbf{E}_{ \overline{\eta}_{\sigma_{i}}}
\bigg[
\int_{0}^{T} 
\mathds{1}_{ \eta_{1} (t) >A}
\ud t
\bigg]\,.
\end{aligned}
\end{equation}
Next, we upper bound each summand 
using
\begin{equation*}
\begin{aligned}
\mathbf{E}_{ \overline{\eta}_{\sigma_{i}}}
\bigg[
\int_{0}^{T} 
\mathds{1}_{ \eta_{1} (t) >A}
\ud t
\bigg]
&\leqslant 
T\, \mathbf{P}_{ \overline{\eta}}
\big( 
\inf_{t \in [0,T]} \gamma_{N} (\eta (t)) \leqslant \delta
\big)\\
& \quad +
\mathbf{E}_{ \overline{\eta}_{\sigma_{i}}}
\bigg[
\int_{0}^{T} 
\mathds{1}_{ \eta_{1} (t) >A}
\ud t
\bigg\vert 
\inf_{t \in [0,T]} \gamma_{N} (\eta (t))> \delta
\bigg]\,,
\end{aligned}
\end{equation*}
with $ \delta = \delta (T, \overline{\eta}) \in (0,1)$ as in Lemma~\ref{lem_apriori}(i).
Note that the contribution of the first term on the
right--hand side of \eqref{eq_symm_supp3} vanishes, as a consequence of Lemma~\ref{lem_apriori}(i). 
Thus, 
\begin{equation}\label{eq_symm_supp4}
\begin{aligned}
\mathbf{E}_{ \overline{\eta}}\bigg[
\int_{0}^{T} 
\frac{\#_{> A} \eta (t)}{L} 
\ud t
\bigg]
\leqslant 
\frac{1}{L}  \sum_{ i=1}^{L}
\mathbf{E}_{ \overline{\eta}_{\sigma_{i}}}
\bigg[
\int_{0}^{T} 
\mathds{1}_{ \eta_{1} (t) >A}
\ud t
\bigg\vert 
\inf_{t \in [0,T]} \gamma_{N} (\eta (t))> \delta
\bigg]+ o(1)\,.
\end{aligned}
\end{equation}
Let $ M \in \mathbf{N}$, we split the sum on the right--hand side into two parts with the partitioning
$\{ \overline{\eta}_{i}~>~M\}~\cup \{\overline{\eta}_{i}~\leqslant~M\}$,
such that
\begin{equation}\label{eq_symm_supp5}
\begin{aligned}
&\frac{1}{L}  \sum_{ i=1}^{L}
\mathbf{E}_{ \overline{\eta}_{\sigma_{i}}}
\bigg[
\int_{0}^{T} 
\mathds{1}_{ \eta_{1} (t) >A}
\ud t
\bigg\vert 
\inf_{t \in [0,T]} \gamma_{N} (\eta (t))> \delta
\bigg]\\
&\leqslant 
\frac{1}{L} 
\sum_{ i =1}^{L} \mathds{1}_{ \overline{\eta}_{i} \leqslant  M}
\sup_{ \substack{ \overline{\eta}'\in \Omega_{L,N}\\ \overline{\eta}'_{1}
\leqslant M\,, \ \gamma_{N}( \overline{\eta}'  ) > \delta}}
\mathbf{E}_{ \overline{\eta}'}
\bigg[
\int_{0}^{T} \mathds{1}_{\eta_{1} (t)>A} \ud t
\bigg\vert
 \inf_{t \in [0,T]} \gamma_{N}( \eta (t)) > \delta
\bigg]
+
\frac{T}{M} \frac{N}{L} \,,
\end{aligned}
\end{equation}
where we used that
$ \sum_{i=1}^{L} \mathds{1}_{ \overline{\eta}_{i}> M} \leqslant
\tfrac{N}{M}$.
Hence, \eqref{eq_symm_supp5}, and thus \eqref{eq_symm_supp4}, vanish when first taking the
thermodynamic limit $\tdlim$, before $ M \to \infty$, using
Lemma~\ref{lem_first_site}.
This concludes convergence in $ L^{1}( \mathbf{P})$, and therefore in
probability.
\end{proof}

\subsection{A priori estimates for $ \gamma_{N}$}\label{sec_aprio}

In the following, we prove Lemma~\ref{lem_apriori}.
Namely, that with high
probability as $\tdlim$
\begin{equation*}
\left.\begin{aligned}
  \liminf_{\tdlim} \gamma_{N} ( \overline{\eta})>0\\
 \limsup_{\tdlim} \gamma_{N} ( \overline{\eta})=0
\end{aligned}\right\}
\quad\Longrightarrow\quad
\left\{\begin{aligned}
  \inf_{t \in [0,T]} \gamma_{N}( \eta (t))>0\,,\\
  \sup_{t \in [0,T]} \gamma_{N}( \eta (t))\ll 1\,.
\end{aligned}\right.
\end{equation*}
Thus, provided we started with a positive mass in the fast phase, it will
remain uniformly positive with high probability.

\begin{proof}[Proof of Lemma~\ref{lem_apriori}]
First, we apply Dynkin's formula, to see that for any $ \overline{c}>0$
\begin{equation}\label{e_dynkin}
\begin{aligned}
e^{ \overline{c} t} \gamma_{N} ( \eta (t))
&= \gamma_{N} ( \overline{\eta})
+ \int_{0}^{t} \big( \partial_{s} + \mathfrak{L}_{L,N}\big) \big( e^{
\overline{c} \cdot} \gamma_{N} (\cdot)\big) (s, \eta (s)) \ud s
+ M_{L,N}(t)\\
& = 
 \gamma_{N} ( \overline{\eta})
+ \int_{0}^{t} 
e^{ \overline{c} s} 
\big( \overline{c}\,  \gamma_{N}( \eta(s)) + \mathfrak{L}_{L,N} \gamma_{n}( \eta
(s))\big) \ud s
+ M_{L,N}(t)
\end{aligned}
\end{equation}
where  $M_{L,N}$ is a martingale with predictable quadratic variation
\begin{equation}\label{eq_qv}
\begin{aligned}
\langle M_{L,N}\rangle_{t}
= \int_{0}^{t} \mathfrak{L}_{L,N} \gamma_{N}^{2}( \eta (s))- 2
\gamma_{N}( \eta (s)) \mathfrak{L}_{L,N} \gamma_{N} ( \eta (s)) \ud s
\,.
\end{aligned}
\end{equation}
The predictable quadratic variation $\langle M_{L,N}\rangle_{t}
$ vanishes, 
because uniformly in $ \eta \in \Omega_{L,N}$ 
\begin{equation}\label{eq_cdchamp}
\begin{aligned}
 &0 \leqslant \mathfrak{L}_{L,N} \gamma_{N}^{2}( \eta)- 2
\gamma_{N}( \eta) \mathfrak{L}_{L,N} \gamma_{N} ( \eta)\\
&\qquad= 
\sum_{i ,j =1}^{L}
u_{1}( \eta_{i}) u_{2}( \eta_{j})
\big[
\gamma_{N}( \eta^{i,j}) - \gamma_{N}( \eta) 
\big]^{2}\\
& \qquad  \leqslant 
\frac{1}{N^{2}} 
\sum_{i ,j =1}^{L}
u_{1}( \eta_{i}) u_{2}( \eta_{j})
\big[
 \mathds{1}_{\eta_{i}>A} \mathds{1}_{\eta_{j}< A}
+ \mathds{1}_{\eta_{i} \leqslant A} \mathds{1}_{\eta_{j} \geqslant
A}
\big]
= \mO \big( \tfrac{1}{L})\,,
\end{aligned}
\end{equation}
where we used Assumption~\ref{ass} in the last equality. The inequality in \eqref{eq_cdchamp} is a consequence of 
\begin{equation}\label{eq_gam_diff}
\begin{aligned}
\gamma_{N} ( \eta^{i,j})
- \gamma_{N}( \eta) =
-\frac{1}{N} \mathds{1}_{\eta_{i}>A} \mathds{1}_{\eta_{j}< A}
+\frac{1}{N} \mathds{1}_{\eta_{i} \leqslant A} \mathds{1}_{\eta_{j} \geqslant
A}\,,
\end{aligned}
\end{equation}
because for $ \gamma_{N}$ to change, a particle has to jump either from
the slow to the fast phase or vice versa.
Moreover, \eqref{eq_gam_diff} implies 
\begin{equation}\label{eq_lb_Lgam}
\begin{aligned}
\mathfrak{L}_{L,N} \gamma_{N}( \eta) 
& \geqslant  
-
\frac{1}{N} 
\sum_{i=1}^{L}
u_{1}( \eta_{i})  \mathds{1}_{\eta_{i}>A}
\sum_{j=1}^{L}
 u_{2}( \eta_{j})
 \mathds{1}_{\eta_{j}< A}
\geqslant 
- 2 ( \overline{r} +1) \gamma_{N} ( \eta)
\,.
\end{aligned}
\end{equation}
where we used  Assumption~\ref{ass} in the second inequality, specifically that $
u_{1}( \eta_{i}) \leqslant 2 ( \eta_{i}-A)$ whenever $ \eta_{i}>A$.\\

We first consider the case $ \liminf_{\tdlim} \gamma_{N} ( \overline{\eta})
>0$.
For every $ T \in (0, \infty)  $ and $ \delta>0 $, we can upper bound
\begin{equation}\label{eq_prob_gamma_est}
\begin{aligned}
\mathbf{P} \Big( 
\inf_{t \in [0,T]} \gamma_{N}( \eta (t)) < \delta
\Big) 
&\leqslant 
\mathbf{P} \Big( 
\inf_{t \in [0,T]} \gamma_{N}( \eta (t)) < \delta
\ \Big\vert
\sup_{t \in [0,T]}
|M_{L,N}(t)| \leqslant 
 \varepsilon\Big) \\
&\quad+ 
\mathbf{P} \Big( 
\sup_{t \in [0,T]}
|M_{L,N}(t)| > 
\varepsilon\Big) \,,
\end{aligned}
\end{equation}
for every $ \varepsilon >0$. 
The second term on the right--hand side vanishes as $\tdlim$, which is a
consequence of Doob's martingale inequality, see \eqref{eq_doob} below.
On the other hand, conditioned on $\sup_{t \in [0,T]}
|M_{L,N}(t)| \leqslant 
 \varepsilon$, \eqref{e_dynkin} (with $ \overline{c}=2 ( \overline{r}+1 ) $)
and \eqref{eq_lb_Lgam} yield the inequality 
\begin{equation}\label{e_gam_estimate}
\begin{aligned}
e^{ \overline{c}t} \gamma_{N} ( \eta(t))
\geqslant
\gamma_{N}( \overline{\eta} )
+
\int_{0}^{t} e^{ \overline{c} s} \big( \overline{c} \, \gamma_{N} (
\eta (s)) - 2 ( \overline{r}+1) \gamma_{N} ( \eta (s))\big) \ud s
- \varepsilon
=
\gamma_{N} ( \overline{\eta}) - \varepsilon
\,,
\end{aligned}
\end{equation}
for every $ t \in [0,T]$. Therefore, for all $N$ and $L$ large enough
\begin{equation*}
\begin{aligned}
\inf_{t \in [0,T]}\gamma_{N} ( \eta (t)) 
\geqslant 
\tfrac{1}{2}
e^{- 2 ( \overline{r}+1) T}
\big( \liminf_{\tdlim}  \gamma_{N} ( \overline{\eta}) - \varepsilon\big) >0\,,
\end{aligned}
\end{equation*}
such that the first term on the right--hand side of \eqref{eq_prob_gamma_est}
equals zero, provided $ \delta$ is small enough.\\

Now, consider the case $ \limsup_{\tdlim} \gamma_{N} ( \overline{\eta})=0$.
Then \eqref{e_dynkin} yields (with $ \overline{c}=0$)
\begin{equation*}
\begin{aligned}
\gamma_{N} ( \eta (t))
&= \gamma_{N} ( \overline{\eta}) 
+ \int_{0}^{t} \mathfrak{L}_{L,N} \gamma_{N}( \eta (s)) \ud s
+ M_{L,N}(t)\\
& 
\leqslant 
 \gamma_{N} ( \overline{\eta}) 
+ 2 ( \overline{q}+1 )  \int_{0}^{t} \gamma_{N}( \eta (s)) \ud s
+ \sup_{s \in [0,T]} |M_{L,N}(s)|
\end{aligned}
\end{equation*}
where we used 
\begin{equation*}
\begin{aligned}
\mathfrak{L}_{L,N} \gamma_{N} ( \eta)
\leqslant \frac{1}{N} 
\sum_{i=1}^{L}
u_{1}( \eta_{i})  \mathds{1}_{\eta_{i} \leqslant A}
\sum_{j=1}^{L}
 u_{2}( \eta_{j})
 \mathds{1}_{\eta_{j} \geqslant  A}
\leqslant 2 ( \overline{q}+1) \gamma_{N} ( \eta) \,,
\end{aligned}
\end{equation*}
which follows from \eqref{eq_gam_diff}, similarly as in \eqref{eq_lb_Lgam}. 
Therefore, Gr\"onwall's inequality yields
\begin{equation*}
\begin{aligned}
\gamma_{N} ( \eta (t))
\leqslant 
\Big( 
 \gamma_{N} ( \overline{\eta}) 
+
 \sup_{s \in [0,T]} |M_{L,N}(s)|
\Big)
e^{ 2 ( \overline{q}+1 )  t}\,,
\end{aligned}
\end{equation*}
which in turn gives the uniform $L^{1}( \mathbf{P}) $--bound
\begin{equation*}
\begin{aligned}
\mathbf{E}\Big[\sup_{t \in [0,T]} 
\gamma_{N} ( \eta (t))\Big]
\leqslant 
\Big( 
 \gamma_{N} ( \overline{\eta}) 
+
\mathbf{E}\Big[\sup_{t \in [0,T]} |M_{L,N}(t)|\Big]
\Big)
e^{ 2 ( \overline{q}+1 )  T}\,.
\end{aligned}
\end{equation*}
This upper bound vanishes, since by Doob's martingale inequality
\cite[Theorem~9.17]{Kallenberg2021}
and \eqref{eq_cdchamp} give
\begin{equation}\label{eq_doob}
\begin{aligned}
\mathbf{E}\Big[ \sup_{t \in [0,T]} |M_{L,N}(t)|^{2}\Big]
\leqslant
4 \mathbf{E}\big[ |M_{L,N} (T)|^{2}\big]
= 4 \mathbf{E}\big[
\langle M_{L,N}\rangle_{T}
\big]\to 0\,, \quad \text{as } \tdlim\,,
\end{aligned}
\end{equation}
and $ \limsup_{\tdlim} \gamma_{N} ( \overline{\eta})=0$ by assumption.
This concludes the proof.
\end{proof}

\subsection{Proof of Lemma~\ref{lem_first_site}}\label{sec_first_site}

The  inclusion process projected onto its first variable $ ( \eta_{1} (t))_{t \geqslant 0}$ 
can interpreted 
as a random walk on $\{0,\ldots, N\}$.
Its transition rates are inhomogeneous, in the sense that
they depend explicitly on the evolution of the whole
inclusion process $ ( \eta (t))_{t \geqslant 0}$: For $ f ( \eta) := \eta_{1} $
\begin{equation*}
\begin{aligned}
\mathfrak{L}_{L,N}f ( \eta) 
&= 
\sum_{i , j =1}^{L} u_{1} ( \eta_{i}) u_{2}( \eta_{j} ) \big[ f (
\eta^{i,j}) - f ( \eta) \big]\\
& = 
- u_{1}( \eta_{1}) 
\sum_{j =2}^{L}u_{2}( \eta_{j}) 
 + u_{2} ( \eta_{1}) \sum_{j =2}^{L} u_{1} ( \eta_{j} )\,.
\end{aligned}
\end{equation*}
In other words, 
$ ( \eta_{1} (t))_{t \geqslant 0}$
is an inhomogeneous continuous--time random walk, which at time $t$
\begin{equation*}
\begin{aligned}
\begin{cases} 
\text{moves up }+1\,, \ \text{with rate } &  u_{2}( \eta_{1}(t)) \sum_{i =2}^{L}
u_{1}( \eta_{i}(t))= p ( \eta(t)) c ( \eta (t))  \\
\text{moves down }-1\,,  \  \text{with rate } & u_{1} ( \eta_{1}(t)) \sum_{j =2}^{L}
u_{2} ( \eta_{j}(t))= (1- p (\eta(t))) c ( \eta(t))  \,.
\end{cases}
\end{aligned}
\end{equation*}
Here we conveniently introduced the total jump rate
$ c : \Omega_{L,N} \to [0, \infty)$
\begin{equation}\label{eq_c_totalrate}
\begin{aligned}
c ( \eta) := 
 u_{1}( \eta_{1}) 
\sum_{j =2}^{L}u_{2}( \eta_{j}) 
+ u_{2} ( \eta_{1}) \sum_{j =2}^{L} u_{1} ( \eta_{j} )\,,
\end{aligned}
\end{equation}
and the probability $ p ( \eta) : \Omega_{L,N} \to [0,1]$
\begin{equation}\label{eq_def_p}
\begin{aligned}
p ( \eta) :=
\frac{1}{c ( \eta)} 
 u_{2} ( \eta_{1}) \sum_{j =2}^{L} u_{1} ( \eta_{j} )\,,
\end{aligned}
\end{equation}
of performing an upward jump.\\

If we were to find rates $ c'_{L}, c''_{L}>0$ and $ p'_{L} \in [0,1]$ such that 
\begin{equation}\label{eq_supp_bndrates}
\begin{aligned}
\sup_{ \substack{\eta \in \Omega_{L,N}\\ \eta_{1}> A}} p ( \eta) \leqslant
p'_{L}
\,, \quad \inf_{ \substack{\eta \in \Omega_{L,N}\\ \eta_{1}>A}} c ( \eta)
\geqslant   c'_{L}
\quad \text{and}\quad 
\sup_{ \substack{\eta \in \Omega_{L,N}\\ \eta_{1} \leqslant A}} c ( \eta)
 \leqslant   c''_{L}\,,
\end{aligned}
\end{equation}
we could construct a new continuous--time random walk $ (\mZ (t))_{t \geqslant
0} = (\mZ^{(L)} (t))_{t \geqslant 0}$  on $
\{A, \ldots, N\}$ (started from $ \max\{\eta_{1}(0) ,A \}$), which
\begin{equation}\label{eq_def_ratesZ}
\begin{aligned}
\begin{cases} 
\text{moves up }+1\,, \ \text{with rate } & p'_{L}  c'_{L}  \mathds{1}_{Z>A} +
c''_{L} \mathds{1}_{\mZ =A}\\
\text{moves down }-1\,,  \  \text{with rate } & ( 1- p'_{L}) c'_{L}
\mathds{1}_{\mZ>A}   \,.
\end{cases}
\end{aligned}
\end{equation}
This new random walk is more likely to jump upwards than $ \eta_{1}(t)$, and
takes longer to perform jumps when $ \mZ (t) >A$. On the other hand, once $ \mZ
(t) $ reaches $ A$, it spends less time there than $ \eta_{ 1}(t)$ before being
reflected, see Figure~\ref{fig_rw_bound}. 
Consequently,  
\begin{equation}\label{eq_coupl_supp1}
\begin{aligned}
\mathbf{P}\bigg(
\int_{0}^{T} \mathds{1}_{ \eta_{1}(t)>A} \ud t > \varepsilon\bigg)
\leqslant 
\mathbf{P}\bigg(
\int_{0}^{T} \mathds{1}_{ \mZ (t)>A} \ud t > \varepsilon\bigg)\,,
\end{aligned}
\end{equation}
and it suffices to study the behaviour of the homogeneous random walk $ \mZ $,
which is much easier.

\begin{figure}[H]
\centering
\begin{tikzpicture}
\fill[gray!20] (0,0) rectangle (13.4,1.); 
\draw[very thick] (0,-0.4) -- (13.4,-.4);
\draw[very thick] (0,-0.55) -- (0,-0.25); 
\draw[very thick] (1.1,-0.55) -- (1.1,-0.25); 
\draw[very thick] (5.12,-0.55) -- (5.12,-0.25); 
\draw[very thick] (8.05,-0.55) -- (8.05,-0.25); 
\draw[very thick] (12.03,-0.55) -- (12.03,-0.25); 

\draw[very thick, purple] (0,2.5) -- (13.4,2.5);
\draw[very thick, purple] (0,2.65) -- (0,2.35); 
\draw[very thick, purple] (1.52,2.65) -- (1.52,2.35); 
\draw[very thick, purple] (2.07,2.65) -- (2.07,2.35); 
\draw[very thick, purple] (9.45,2.65) -- (9.45,2.35); 
\draw[very thick, purple] (8.96,2.65) -- (8.96,2.35); 
\draw[very thick, purple] (10.92,2.65) -- (10.92,2.35); 
\draw[very thick, purple] (12.5,2.65) -- (12.5,2.35);

    \begin{axis}[
        width=15cm, height=5cm,
        axis lines=middle,
        xlabel={\scriptsize time $t$},
        xmin=0, xmax=11,
        ymin=0.0, ymax=1.7,
        xtick=\empty, ytick=\empty,
    ]
    
    \addplot[black, very thick] coordinates {
(0.00,0.70) (0.05,0.70) (0.10,0.75) (0.15,0.80) (0.20,0.75) (0.25,0.80)
(0.30,0.75) (0.35,0.75) (0.40,0.7) (0.45,0.70) (0.50,0.75) (0.55,0.75)
(0.60,0.70) (0.65,0.65) (0.70,0.70) (0.75,0.65) (0.80,0.60) (0.85,0.55)
(0.90,0.5) (0.95,0.5) (1.00,0.5) (1.05,0.50) (1.10,0.5) 
(1.15,0.50) (1.20,0.50) (1.25,0.50) (1.30,0.50) (1.35,0.50)  
(1.40,0.45) (1.45,0.45) (1.50,0.45) (1.55,0.45) (1.60,0.45) (1.65,0.45) (1.70,0.45) (1.75,0.45) (1.80,0.45) (1.85,0.45)
(1.90,0.40) (1.95,0.40) (2.00,0.40) (2.05,0.40) (2.10,0.40) (2.15,0.40)
(2.20,0.40) (2.25,0.40) (2.30,0.40) (2.35,0.40) (2.40,0.40) (2.45,0.40)
(2.50,0.40) (2.55,0.40)
(2.60,0.40) (2.65,0.40) (2.70,0.40) (2.75,0.40) (2.80,0.40) (2.85,0.40)
(2.90,0.40)
(2.95,0.40) (3.00,0.40) (3.05,0.40) (3.10,0.40) (3.15,0.40) (3.20,0.40)
(3.25,0.40) (3.30,0.40) (3.35,0.45) (3.40,0.45) (3.45,0.45) (3.50,0.45)
(3.55,0.45) (3.60,0.45)
(3.65,0.45) (3.70,0.45) (3.75,0.45) (3.80,0.45) (3.85,0.45) (3.90,0.5)
(3.95,0.5) (4.00,0.5) (4.05,0.5) (4.10,0.5) (4.15,0.5) (4.20,0.5)
(4.20,0.50) (4.25,0.55) (4.30,0.55) (4.35,0.60) (4.40,0.60) (4.45,0.65)
(4.50,0.60) (4.55,0.65) (4.60,0.65) (4.65,0.70) (4.70,0.65) (4.75,0.65)
(4.80,0.65) (4.85,0.70) (4.90,0.70) (4.95,0.65) (5.00,0.65) (5.05,0.60)
(5.10,0.55) (5.15,0.55) (5.20,0.60) (5.25,0.60) (5.30,0.65) (5.35,0.65)
(5.40,0.70) (5.45,0.75) (5.50,0.80) (5.55,0.80) (5.60,0.80) (5.65,0.75)
(5.70,0.70) (5.75,0.65) (5.80,0.70) (5.85,0.75) (5.90,0.75) (5.95,0.75)
(6.00,0.70) (6.05,0.65) (6.10,0.65) (6.15,0.65) (6.20,0.65) (6.25,0.60)
(6.30,0.55) (6.35,0.55) (6.40,0.60) (6.45,0.60) (6.50,0.60) (6.55,0.55)
 (6.60,0.50) (6.65,0.50) (6.70,0.50) (6.75,0.50) (6.80,0.50) (6.85,0.50)
(6.90,0.50) (6.95,0.50) (7.00,0.50) (7.05,0.50) (7.10,0.50) (7.15,0.50)
(7.20,0.50) (7.25,0.50) (7.30,0.50) (7.35,0.50) (7.40,0.50) (7.45,0.50)
(7.50,0.50) (7.55,0.50) (7.60,0.50) (7.65,0.50) (7.70,0.50) (7.75,0.50)
(7.80,0.50) (7.85,0.50) (7.90,0.50) (7.95,0.50) (8.00,0.50) (8.05,0.50)
(8.10,0.50) (8.15,0.50) (8.20,0.45) (8.25,0.450) (8.30,0.450) (8.35,0.450)
(8.40,0.450) (8.45,0.450) (8.50,0.450) (8.55,0.45) (8.60,0.450) (8.65,0.450)
(8.70,0.450) (8.75,0.450) (8.80,0.450) (8.85,0.450) (8.90,0.450) (8.95,0.450)
(9.00,0.450) (9.05,0.45) (9.10,0.45) (9.15,0.45) (9.20,0.45) (9.25,0.45)
(9.30,0.45) (9.35,0.45) (9.40,0.45) (9.45,0.450) (9.50,0.450) (9.55,0.450)
(9.60,0.450) (9.65,0.450) (9.70,0.450) (9.75,0.50) (9.80,0.50)
(9.85,0.50) (9.90,0.55) (9.95,0.60) (10.00,0.65) (10.05,0.60) (10.10,0.55)
(10.15,0.60) (10.20,0.60) (10.25,0.60) (10.30,0.65) (10.35,0.70) (10.40,0.65)
(10.45,0.70) (10.50,0.65) (10.55,0.70) (10.60,0.65) (10.65,0.70) (10.70,0.65)
(10.75,0.70) (10.80,0.70) (10.85,0.75) (10.90,0.75) (10.95,0.70) (11.00,0.65)
(11.05,0.70)  };
    
    \addplot[purple, very thick] coordinates {
(0.00,0.70) (0.05,0.75) (0.10,0.75) (0.15,0.80) (0.20,0.85) (0.25,0.85)
(0.30,0.90) (0.35,0.85) (0.40,0.8) (0.45,0.8) (0.50,0.85) (0.55,0.85)
(0.60,0.9) (0.65,0.95) (0.70,0.95) (0.75,0.90) (0.80,0.85) (0.85,0.80)
(0.90,0.75) (0.95,0.8) (1.00,0.75) (1.05,0.65) (1.10,0.65) 
(1.15,0.60) (1.20,0.55) (1.25,0.51) (1.30,0.51) (1.35,0.51) (1.40,0.51) (1.45,0.51) (1.50,0.51) (1.55,0.51) (1.60,0.51) (1.65,0.51) (1.70,0.51) 
(1.75,0.55) (1.80,0.55) (1.85,0.6) (1.90,0.65) (1.95,0.6) (2.00,0.65)
(2.05,0.7) (2.10,0.7) (2.15,0.65) (2.20,0.60) (2.25,0.55) (2.30,0.55)
(2.35,0.55) (2.40,0.6) (2.45,0.6) (2.50,0.65) (2.55,0.65) (2.60,0.6)
(2.65,0.65) (2.70,0.75) (2.75,0.75) (2.80,0.8) (2.85,0.85) (2.90,0.85)
(2.95,0.8) (3.00,0.75) (3.05,0.8) (3.10,0.8) (3.15,0.75) (3.20,0.7)
(3.25,0.65) (3.30,0.65) (3.35,0.7) (3.40,0.65) (3.45,0.7) (3.50,0.7)
(3.55,0.75) (3.60,0.7) (3.65,0.7) (3.70,0.65) (3.75,0.6) (3.80,0.6)
(3.85,0.6) (3.90,0.65) (3.95,0.65) (4.00,0.60) (4.05,0.55) (4.10,0.55)
(4.15,0.60) (4.20,0.65) (4.25,0.60) (4.30,0.65) (4.35,0.70) (4.40,0.75)
(4.45,0.80) (4.50,0.75) (4.55,0.75) (4.60,0.80)
(4.65,0.85) (4.70,0.85)
(4.75,0.8) (4.80,0.85) (4.85,0.8) (4.90,0.85) (4.95,0.9) (5.00,0.9)
(5.05,0.85) (5.10,0.85) (5.15,0.90) (5.20,0.85) (5.25,0.9) (5.30,0.95)
(5.35,0.95) (5.40,0.90) (5.45,0.95) (5.50,0.95) (5.55,0.9) (5.60,0.85)
(5.65,0.9) (5.70,0.85) (5.75,0.80) (5.80,0.80) (5.85,0.85) (5.90,0.80)
(5.95,0.85) (6.00,0.90) (6.05,0.85) (6.10,0.85) (6.15,0.8)
(6.20,0.75)
(6.25,0.75) (6.30,0.80) (6.35,0.85) (6.40,0.85) (6.45,0.85) (6.50,0.85)
(6.55,0.80) (6.60,0.75) (6.65,0.70) (6.70,0.65) (6.75,0.70) (6.80,0.75)
(6.85,0.70) (6.90,0.70) (6.95,0.65) (7.00,0.65) (7.05,0.70) (7.10,0.70)
(7.15,0.65) (7.20,0.65) (7.25,0.60) (7.30,0.55) (7.35,0.51) (7.40,0.51)
(7.45,0.51) (7.50,0.51)
(7.55,0.51) (7.60,0.51) (7.65,0.51) (7.70,0.51) (7.75,0.51) (7.80,0.55)
(7.85,0.6) (7.90,0.60) (7.95,0.55) (8.00,0.60) (8.05,0.650) (8.10,0.650)
(8.15,0.70) (8.20,0.750) (8.25,0.750) (8.30,0.70) (8.35,0.650) (8.40,0.60)
(8.45,0.60) (8.50,0.650) (8.55,0.60) (8.60,0.550) (8.65,0.55) (8.70,0.6)
(8.75,0.65) (8.80,0.65) (8.85,0.55) (8.90,0.55) (8.95,0.51) (9.00,0.51) (9.05,0.51) (9.10,0.51) (9.15,0.51) (9.20,0.51) (9.25,0.51) (9.30,0.51) (9.35,0.51) (9.40,0.51) (9.45,0.51) (9.50,0.51) (9.55,0.51) (9.60,0.51) (9.65,0.51) (9.70,0.51) (9.75,0.51) (9.80,0.51)
(9.85,0.51) (9.90,0.51) (9.95,0.51) (10.00,0.51) (10.05,0.51) (10.10,0.51)
(10.15,0.51) (10.20,0.51) (10.25,0.51) (10.30,0.55) (10.35,0.550) (10.40,0.6)
(10.45,0.65) (10.50,0.65) (10.55,0.6) (10.60,0.65) (10.65,0.70) (10.70,0.75)
(10.75,0.75) (10.80,0.8) (10.85,0.8) (10.90,0.85) (10.95,0.90) (11.00,0.950)
           };

\foreach \x in {0.9,4.2,6.6,9.85} {
        \addplot[dashed, black] coordinates { (\x,0) (\x,.5) };
    }

    \foreach \x in {1.25,1.7,7.35,7.75,8.95,10.25} {
        \addplot[dashed, black] coordinates { (\x,0.5) (\x,1.2) };
    }
    
    \end{axis}

    \node at (-.2, 1) {\scriptsize $A$};

    \node at (3, -.7) {\scriptsize $\mR_1$};
    \node at (10, -.7) {\scriptsize $\mR_2$};

 \node at (0.55, -.7) {\scriptsize $\mT_0$};
    \node at (6.6, -.7) {\scriptsize $\mT_1$};
    \node at (13, -.7) {\scriptsize $\mT_2$};

    \node at (1.8, 2.7) {\scriptsize $\mR_1'$};
\node at (9.23, 2.7) {\scriptsize $\mR_2'$};
    \node at (11.7, 2.7) {\scriptsize $\mR_3'$};

\node at (.6, 2.7) {\scriptsize $\mT_0'$};
    \node at (5.45, 2.7) {\scriptsize $\mT_1'$};
    \node at (10.15, 2.7) {\scriptsize $\mT_2'$};
    \node at (13, 2.7) {\scriptsize $\mT_3'$};

   \node at (0,3.7) {\scriptsize $ \eta_{1}(t)$, $ \mZ (t)$};

\end{tikzpicture}
\caption{Evolution of $ \eta_{1}(t) $ (in black) and $\mZ (t) $ (in red). For
rates satisfying \eqref{eq_supp_bndrates}, there exists a coupling of the two
processes such that $\mT_{n} \leqslant \mT_{n}' $ and $ \mR_{n} \geqslant
\mR_{n}' $, i.e.~the duration of $ \eta_{1}$ spent away from $ A$ is bounded
by the duration of $\mZ $ away from $A$, see \eqref{eq_coupl_supp1}. }
\label{fig_rw_bound} 
\end{figure}
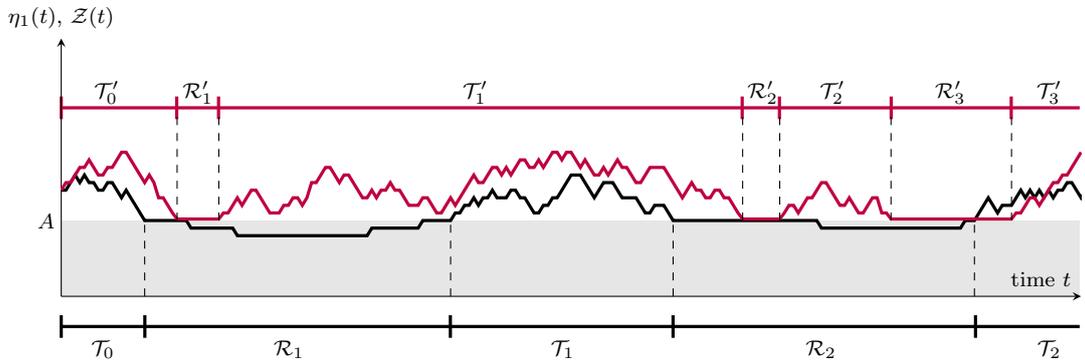

Now,  to conclude Lemma~\ref{lem_first_site} from \eqref{eq_coupl_supp1}, it is necessary that $
c''_{L} \ll c'_{L}$, as well as $1 \ll c'_{L}$, so that the excursions of $
\mZ $ away from $A$ only last over a 
vanishing time--period.
However, this will only be true if $ \gamma_{N} >0$, because the particle
exchange is driven by the mass in the fast phase.
To proceed, we must therefore condition on the event that $
\gamma_{N} $ is uniformly positive, which allows us to construct a random walk
$\mZ$ with rates analogous to \eqref{eq_supp_bndrates}.

\begin{lemma}\label{lem_coupling}
For every $ \delta \in (0,1)$,
there exist constants $ a, c'' >0$, independent of $N$ and~$L$, such that
\begin{equation*}
\begin{aligned}
 \inf_{ \substack{\eta \in \Omega_{L,N}\\ \eta_{1}>A\,, \ \gamma_{N} ( \eta) >
\delta}} c ( \eta)
\geqslant a  L =: c'_{L}
\quad \text{and}\quad 
\limsup_{\tdlim}\sup_{ \substack{\eta \in \Omega_{L,N}\\ \eta_{1} \leqslant A\,, \ \gamma_{N} ( \eta) >
\delta}} c ( \eta)
 \leqslant   c''\,.
\end{aligned}
\end{equation*}
Moreover, for $L$ large enough
\begin{equation*}
\begin{aligned}
\sup_{ \substack{ \eta \in \Omega_{L,N} \\ \eta_{1}>A\,, \  \gamma_{N} ( \eta) > \delta}} p ( \eta) 
\leqslant \tfrac{1}{2}+15 \zeta_{L}=: p'_{L}\,,
\end{aligned}
\end{equation*}
where $ \zeta_{L} =\mO(\tfrac{1}{L})$ was introduced in Assumption~\ref{ass}.
\end{lemma}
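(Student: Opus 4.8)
The plan is to express both quantities through the partial sums $S_1 := \sum_{j=2}^{L} u_1(\eta_j)$ and $S_2 := \sum_{j=2}^{L} u_2(\eta_j)$, so that $c(\eta) = u_1(\eta_1)S_2 + u_2(\eta_1)S_1$ and $p(\eta) = u_2(\eta_1)S_1/\big(u_1(\eta_1)S_2 + u_2(\eta_1)S_1\big)$, and to feed in a few elementary consequences of Assumption~\ref{ass}. Write $m := \gamma_N(\eta)N = \sum_i \eta_i^+$, $m' := m - \eta_1^+$ for the fast-phase mass away from site $1$, and $F' := \#\{j \geq 2 : \eta_j > A\} \leq m'$. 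On a fast site one has $u_1(\eta_j), u_2(\eta_j) \in [\eta_j^+ - \zeta_L, \eta_j^+ + \zeta_L]$; on a non-fast site $0 \leq u_1(\eta_j) \leq (\overline{q}+\zeta_L)/L$ and, for $L$ large, $c_0/L \leq u_2(\eta_j) \leq (\overline{r}+\zeta_L)/L$ with a constant $c_0 > 0$ (a lower bound for $L\,u_2(k)$, $k = 0,\ldots,A$), and moreover $u_1(\eta_j) \leq u_2(\eta_j) + 2\zeta_L/L$ there because $q_k \leq r_k$. Two facts I would use repeatedly are $S_1 \geq (1-\zeta_L)m' \geq (1-\zeta_L)F'$ and $S_2 \geq (L-1)c_0/L \geq c_0/2$ for $L$ large.

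\emph{The two bounds on $c$.} Suppose first $\eta_1 > A$ and $\gamma_N(\eta) > \delta$, so $\eta_1^+ + m' = m > \delta N$. If $m' \geq m/2$ then $S_1 \geq (1-\zeta_L)m' \geq \delta N/4$, and since $u_2(\eta_1) \geq \eta_1^+ - \zeta_L \geq 1/2$ we get $c(\eta) \geq u_2(\eta_1)S_1 \geq \delta N/8$; if $m' < m/2$ then $\eta_1^+ > \delta N/2$, so $u_1(\eta_1) \geq \delta N/4$ and $c(\eta) \geq u_1(\eta_1)S_2 \geq (\delta N/4)(c_0/2)$. Using $N \geq \rho L/2$ for $L$ large, both give $c(\eta) \geq aL$ with $a := \delta\rho\min(1,c_0)/16$. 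For the upper bound, if $\eta_1 \leq A$ then $u_1(\eta_1) \leq (\overline{q}+\zeta_L)/L$ and $u_2(\eta_1) \leq (\overline{r}+\zeta_L)/L$, while $S_1 \leq m + \#_{>A}\eta\cdot\zeta_L + \overline{q} + \zeta_L \leq N(1+\zeta_L) + \overline{q} + \zeta_L$ and similarly for $S_2$ with $\overline{r}$, since $\#_{>A}\eta \leq N$. Hence $c(\eta) \leq (\overline{q}+\overline{r})(N/L)(1+o(1)) + o(1)$ uniformly, and $\limsup_{\tdlim} \sup c(\eta) \leq (\overline{q}+\overline{r})\rho =: c''$.

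\emph{The bound on $p$.} Assume $\eta_1 > A$; if $S_1 = 0$ then $p(\eta) = 0$ and there is nothing to prove, so take $S_1 > 0$ and write $p(\eta)^{-1} = 1 + \frac{u_1(\eta_1)}{u_2(\eta_1)}\cdot\frac{S_2}{S_1}$. The first factor satisfies $\frac{u_1(\eta_1)}{u_2(\eta_1)} \geq \frac{\eta_1^+ - \zeta_L}{\eta_1^+ + \zeta_L} \geq 1 - 2\zeta_L$ because $\eta_1^+ \geq 1$. For the second, bound $S_1 - S_2 \leq 2\zeta_L F' + 2\zeta_L(G/L) \leq 2\zeta_L(F' + 1)$, where $G$ is the number of slow sites among $j \geq 2$, and split: if $F' \geq 1$ then $F' + 1 \leq 4S_1$ for $L$ large, so $S_1 - S_2 \leq 8\zeta_L S_1$ and $S_2/S_1 \geq 1 - 8\zeta_L$; if $F' = 0$ then every $j \geq 2$ is non-fast, so $u_1(\eta_j) \leq u_2(\eta_j)(1 + 2\zeta_L/c_0)$ term by term (using $u_2(\eta_j) \geq c_0/L$), hence $S_2/S_1 \geq (1 + 2\zeta_L/c_0)^{-1} \geq 1 - 2\zeta_L/c_0$. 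Combining, $p(\eta)^{-1} \geq 2 - C\zeta_L$ for a constant $C$ depending only on the rate bounds, so $p(\eta) \leq \tfrac12 + \tfrac{C}{2}\zeta_L$ for $L$ large; keeping track of the constants yields the stated $\tfrac12 + 15\zeta_L$.

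The routine parts are the two $c$-estimates and the ratio $u_1(\eta_1)/u_2(\eta_1) \geq 1 - 2\zeta_L$. The main obstacle is the relative estimate $S_2/S_1 \geq 1 - O(\zeta_L)$: the direct computation only gives $S_1 - S_2 = O(\zeta_L \cdot \#\{\text{fast sites}\})$, which is of constant order and a priori not small compared with $S_1$. The case distinction above is what resolves this — either the fast-phase mass outside site $1$ is spread over $F' \geq 1$ sites, each contributing at least $1 - \zeta_L$ to $S_1$ so that $S_1 \gtrsim F'$ absorbs the error, or $m'$ sits on no site other than $1$, in which case all remaining sites are slow or empty and the pointwise inequality $u_1(\eta_j) \leq u_2(\eta_j)$ (from $q_k \leq r_k$, together with $u_1(0) = 0$) upgrades the additive bound to a relative one. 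Isolating $S_1 = 0$ is necessary because the ratio is otherwise undefined.
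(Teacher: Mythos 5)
Your proof is correct and follows the same overall strategy as the paper's: express $c(\eta)$ and $p(\eta)$ through the sums $S_1,S_2$ via \eqref{eq_c_totalrate}--\eqref{eq_def_p}, control them with Assumption~\ref{ass}, and split according to whether the fast phase extends beyond site $1$ (your dichotomy $F'\geqslant 1$ versus $F'=0$ is the paper's $\eta_1^+<\gamma_N(\eta)N$ versus $\eta_1^+=\gamma_N(\eta)N$). The genuine difference is how the degenerate case is handled: the paper keeps the slow-phase sum $\sum_{i\geqslant 2}\mathds{1}_{0<\eta_i\leqslant A}\,q_{\eta_i}/L$ in the denominator of \eqref{e_p_ub1} and bounds it below by $\tfrac{L-1}{L}\,\underline{q}$, whereas your $F'=0$ case uses the sitewise inequality $u_1(\eta_j)\leqslant u_2(\eta_j)(1+2\zeta_L/c_0)$, anchored on $u_2(k)\geqslant c_0/L$ for all $k=0,\ldots,A$, i.e.\ on $r_k>0$ including $r_0$. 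This buys robustness: your bound (and likewise your use of $S_2\geqslant c_0/2$ in the lower bound for $c$) also covers configurations where the sites $j\geqslant 2$ are empty, so that $q_{\eta_j}=q_0=0$ contributes nothing to the paper's denominator --- a corner that the displayed estimates \eqref{e_p_ub1} and \eqref{eq_c_lb1} pass over. One caveat: with your constants the final bound is $p(\eta)\leqslant\tfrac12+\tfrac{C}{2}\zeta_L$ with $C$ of order $\max(8,2/c_0)$, so ``keeping track of the constants'' yields the literal $\tfrac12+15\zeta_L$ only when $c_0$ is not too small; in general you obtain $\tfrac12+C\zeta_L$ with $C$ depending on the rate constants. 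This is harmless --- all that is used later (Lemma~\ref{lem_first_site}) is $p'_L=\tfrac12+\mO(\zeta_L)$, and the paper's own ``$\leqslant 12\zeta_L$ for $L$ large'' step has the same parameter dependence --- but since the statement pins the constant at $15$, you should either track it explicitly or remark that $15$ may be replaced by a rate-dependent constant without affecting the sequel.
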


A coupling argument then concludes comparison of $ \eta_{1}(t)$ and $ \mZ
(t)$:

\begin{corollary}\label{cor_coupling}
For every initial condition $ \overline{\eta} \in
\Omega_{L,N}$ the inclusion process satisfies
\begin{equation*}
\begin{aligned}
\mathbf{P}_{ \overline{\eta}}\bigg(
\int_{0}^{T} \mathds{1}_{ \eta_{1}(t)>A} \ud t > \varepsilon \bigg\vert 
\inf_{t \in [0,T]} \gamma_{N}( \eta (t)) > \delta
\bigg)
\leqslant 
\mathbf{P}_{ \overline{\mZ}}\bigg(
\int_{0}^{T} \mathds{1}_{ \mZ (t)>A} \ud t > \varepsilon\bigg)\,,
\end{aligned}
\end{equation*}
where $ \mZ $  is the random walk \eqref{eq_def_ratesZ} started from $
\overline{\mZ}:=\max\{
A, \eta_{1}(0) \}$, with rates $c'_{L},c''$ and $p_{L}'$ from Lemma~\ref{lem_coupling}.
\end{corollary}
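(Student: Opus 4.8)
The plan is to reduce the probabilistic comparison to a pathwise coupling of $\eta_{1}$ with $\mZ$. Write $W(t):=(\eta_{1}(t)-A)_{+}$ and $V(t):=\mZ(t)-A$, so that $\{\eta_{1}(t)>A\}=\{W(t)>0\}$ and $\{\mZ(t)>A\}=\{V(t)>0\}$; note that $\overline{\mZ}=\max\{A,\eta_{1}(0)\}$ gives $V(0)=W(0)$. It then suffices to realise on one probability space the inclusion process started from $\overline{\eta}$ together with the walk $\mZ$ started from $\overline{\mZ}$ in such a way that, on the event $E:=\{\inf_{t\in[0,T]}\gamma_{N}(\eta(t))>\delta\}$, almost surely
\[
\{t\in[0,T]:W(t)>0\}\ \subseteq\ \{t\in[0,T]:V(t)>0\}.
\]
Integrating this inclusion over $[0,T]$ gives $\int_{0}^{T}\mathds{1}_{\eta_{1}(t)>A}\,\ud t\leqslant\int_{0}^{T}\mathds{1}_{\mZ(t)>A}\,\ud t$ on $E$, and taking probabilities (the coupled $\mZ$ retaining its marginal law) then yields the asserted comparison.

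The coupling I would build runs both processes through their successive excursions away from $0$, matched in order, exactly as depicted in Figure~\ref{fig_rw_bound}. The point is that on $E$ the estimates of Lemma~\ref{lem_coupling} are available at every time of $[0,T]$: while $W(t)\geqslant 1$ the total jump rate of $\eta_{1}$ is $\geqslant c'_{L}$ and the conditional probability of an upward step is $\leqslant p'_{L}$; while $W(t)=0$ the rate at which $\eta_{1}$ jumps from $A$ to $A+1$ is $\leqslant c''$; and $V$ has, above $0$, total rate exactly $c'_{L}$ and up--probability exactly $p'_{L}$, and leaves $0$ at rate exactly $c''$. Accordingly I would couple, within each excursion, the embedded jump chains monotonically (an upward step of $W$ forces one of $V$, because $p(\eta)\leqslant p'_{L}$), the holding times of $W$ above $0$ so that each is dominated by the corresponding one of $V$ (because $c(\eta)\geqslant c'_{L}$), and the sojourn of $V$ at $0$ so that it is dominated by that of $W$ (because $\eta_{1}$ escapes upward at rate $\leqslant c''$). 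In the notation of the figure this yields $\mT_{n}\leqslant\mT_{n}'$ and $\mR_{n}\geqslant\mR_{n}'$, i.e.\ the $n$-th excursion of $W$ away from $0$ is contained in the $n$-th excursion of $V$, which is the inclusion above.

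The technical heart, and the main obstacle, is making this excursion matching rigorous. Two points require care. First, the transition rates of $\eta_{1}$ are not deterministic but depend on the full configuration $\eta(t)$, so the stochastic--domination couplings above must be performed adaptively with respect to the natural filtration of $\eta$; the relevant survival--function bounds (for instance $\mathbf{P}(H>s\mid\mathcal{F}_{t})\leqslant e^{-c'_{L}s}$ for a holding time $H$ of $\eta_{1}$ on $\{W\geqslant 1\}\cap E$, and $\mathbf{P}(S>s\mid\mathcal{F}_{t})\geqslant e^{-c''s}$ for a sojourn $S$ of $\eta_{1}$ at $A$ on $E$) follow from Lemma~\ref{lem_coupling} and persist as conditional statements. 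Second, one must check that the matching does not break across excursions: that an excursion of $V$ never returns to $0$ before $W$ has re--entered $\{0\}$, and that $V$ does not wait at $0$ past the moment $W$ starts its next excursion. Here one uses that above $A$ the walk $\mZ$ is simultaneously slower and more strongly upward--biased than $\eta_{1}$ (so its excursions last at least as long in time) while it leaves $A$ at least as fast (so its sojourns at $A$ are at least as short); this is precisely what closes the induction on the excursion index.
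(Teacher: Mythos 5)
Your coupling construction itself (matching the embedded jump chains excursion by excursion using $p(\eta)\leqslant p'_{L}$, dominating the holding times above $A$ via $c(\eta)\geqslant c'_{L}$, and dominating the sojourn of $\mZ$ at $A$ via the rate bound $c''$) is exactly the paper's argument. The gap is in your reduction step: you claim the coupling yields the pathwise set inclusion $\{t\in[0,T]:\eta_{1}(t)>A\}\subseteq\{t\in[0,T]:\mZ(t)>A\}$, on the grounds that $\mT_{n}\leqslant\mT_{n}'$ and $\mR_{n}\geqslant\mR_{n}'$ mean ``the $n$-th excursion of $W$ is contained in the $n$-th excursion of $V$''. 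That is false: the two inequalities order the \emph{durations}, not the time intervals. The left endpoint of the $n$-th excursion of $\eta_{1}$ is the sum of the preceding excursion durations $\mT_{k}$ and waiting durations $\mR_{k}$, and the analogous endpoint for $\mZ$ involves $\mT_{k}'$ and $\mR_{k}'$; since $\mT_{k}\leqslant\mT_{k}'$ but $\mR_{k}\geqslant\mR_{k}'$ pull in opposite directions, these endpoints are not comparable. Concretely, if $\mR_{1}$ is much larger than $\mR_{1}'$ while the later excursions of $\mZ$ are only marginally longer than those of $\eta_{1}$, then at a later time $\eta_{1}$ can be above $A$ while $\mZ$ is sitting at $A$ in one of its waiting periods, so the inclusion of occupation sets fails; no excursion-matched coupling can align the phases, precisely because the waiting times of $\mZ$ are genuinely shorter. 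Your ``second point'' (that the matching does not break across excursions) is exactly this alignment claim, and the rate comparisons do not justify it.

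What you actually need, and what the paper uses, is the weaker pathwise comparison of occupation \emph{times}: under the coupling, $\int_{0}^{T}\mathds{1}_{\eta_{1}(t)>A}\ud t\leqslant\int_{0}^{T}\mathds{1}_{\mZ(t)>A}\ud t$ for every $T$. This does follow from $\mT_{k}\leqslant\mT_{k}'$ and $\mR_{k}\geqslant\mR_{k}'$, but by a short bookkeeping argument rather than by a set inclusion: let $k_{2}$ be the number of waiting periods at $A$ that $\mZ$ has begun by time $T$ and $k_{1}$ the number that $\eta_{1}$ has completed; if $k_{1}\geqslant k_{2}$, compare the cumulative times spent at level $A$, namely $\sum_{i\leqslant k_{1}}\mR_{i}\geqslant\sum_{i\leqslant k_{2}}\mR_{i}'$, while if $k_{1}<k_{2}$, compare the cumulative times spent above $A$, namely $\eta_{1}$ has accumulated at most $\sum_{i\leqslant k_{1}}\mT_{i}$ whereas $\mZ$ has accumulated at least $\sum_{i<k_{2}}\mT_{i}'\geqslant\sum_{i\leqslant k_{1}}\mT_{i}$. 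With this replacement your argument closes and coincides with the paper's proof. A further small point (shared with the paper's write-up): the bounds of Lemma~\ref{lem_coupling} only hold on configurations with $\gamma_{N}>\delta$ and the statement is a conditional probability, so the coupling should be run up to the stopping time at which $\gamma_{N}$ first drops to $\delta$, which exceeds $T$ on the conditioning event.
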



The proof of Corollary~\ref{cor_coupling}, and Lemma~\ref{lem_coupling}, is presented in Section~\ref{sec_coup}.
Corollary~\ref{cor_coupling} allows us to bound the duration of excursions of $
\eta_{1}(t)$ away
from $\{0,\ldots,A\}$ in terms of a homogeneous continuous--time random walk $\mZ$.
The explicit estimate on transition rates of $\mZ$ allows us to infer
Lemma~\ref{lem_first_site}.

We are now ready to state the proof of Lemma~\ref{lem_first_site}.

\begin{proof}[Proof of Lemma~\ref{lem_first_site}]
Let $ M \in \mathbf{N}$, $ T \in (0, \infty) $ and $ \delta >0$. Moreover, let $ \overline{\eta} = \overline{\eta}^{(L,N)} \in
\Omega_{L,N}$ be a sequence of initial conditions such that $
\overline{\eta}_{1} \leqslant M $ and $ \gamma_{N}( \overline{\eta} ) > \delta$.

We apply Corollary~\ref{cor_coupling}, which yields the upper bound
\begin{equation}\label{e_lem_first_site_supp2}
\begin{aligned}
\mathbf{P}_{ \overline{\eta}}\bigg(
 \int_{0}^{T} \mathds{1}_{\eta_{1} (t)>A} \ud t
\ \bigg \vert
\inf_{t \in [0,T]} \gamma_{N}( \eta (t)) > \delta
\bigg)
\leqslant
\mathbf{P}_{ \overline{M}}\bigg(
\int_{0}^{T} \mathds{1}_{ \mZ (t)>A} \ud t > \varepsilon\bigg)\,,
\end{aligned}
\end{equation}
where $ \mZ = \mZ^{(L)}$ is the continuous--time random walk 
from \eqref{eq_def_ratesZ} starting in $ \overline{M}:= \max \{M,A\}$, with $ c'_{L}, c'' $ and $ p_{L}' $ from
Lemma~\ref{lem_coupling}.

The random walk $ \mZ $ can be interpreted as a renewal process that restarts
every time it hits level $A$.
To this end, we define the stopping times $ \tau_{0} :=0$ and 
\begin{equation}\label{eq_stoptime_Z}
\begin{aligned}
\sigma_{k}' &:= \inf \{ t> \tau_{k-1}' \,:\, \mZ(t) = A\}\\
\tau_{k}' &:= \inf \{ t> \sigma_{k}' \,:\, \mZ(t) = A+1\}
\end{aligned}
\end{equation} 
for $ k \in \mathbf{N}$.
We then partition $[0, \infty)$ into intervals $[ \tau_{k}',
\sigma_{k+1}' )$ where $ \mZ (t) > A $, and intervals $ [ \sigma_{k +1}', \tau_{k
+1}' )$ where $ \mZ (t)=A$.
In view of \eqref{e_lem_first_site_supp2}, it
will suffice to study the lengths of these renewal intervals, because
\begin{equation}\label{e_lem_first_site_supp3}
\begin{aligned}
\mathbf{P}_{ \overline{M}}
\bigg(
\int_{0}^{T} \mathds{1}_{ \mZ (t)>A} \ud t > \varepsilon
\bigg)
\leqslant 
\mathbf{P}_{ \overline{M}}
\bigg(
\sum_{k =0}^{\mK_{T}'} \mT_{k}' 
 > \varepsilon
\bigg)\,,
\end{aligned}
\end{equation}
where $ \mT_{k}' := \sigma_{k+1}' - \tau_{k}' $ denotes the duration of
the $k$--th excursion away from $ A$, see also Figure~\ref{fig_rw_bound}, and 
\begin{equation*}
\begin{aligned}
\mK_{T}' :=
\inf \{ k \in \mathbf{N}\,: \, \tau_{k}' \geqslant T\}
\end{aligned}
\end{equation*}
is an upper bound of the number of renewals within $[0,T]$. 
We also define $ \mR_{k}' :=
\sigma_{k}' - \tau_{k}' $, the time $\mZ$ spends at level $ A$ when hitting
it for the $k$--th time.
Note that $\mR_{k}' = \mathrm{Exp}(c'')$ in law, independently for every $k$.
Together with the fact that $ \tau_{k}' = \sum_{i=1}^{k} ( \mT_{i}' +
\mR_{i}')$, this implies
\begin{equation}\label{eq_estimateK}
\begin{aligned}
\mathbf{P}_{ \overline{M}}( \mK_{T}' >K ) 
=
\mathbf{P}_{ \overline{M}} ( \tau_{K}' <T)
\leqslant 
\mathbf{P}_{ \overline{M}}
\Big( 
\sum_{i=1 }^{K} \mR_{i}' <T
\Big)
\leqslant \frac{c'' T}{K-1}
 \,, \quad \text{for all } K \in \mathbf{N}\,,
\end{aligned}
\end{equation}
where made use of the explicit density of a $ \mathrm{Gamma} (K, c'')$ random
variable:
\begin{equation*}
\begin{aligned}
\mathbf{P}_{ \overline{M}}
\Big( 
\sum_{i=1 }^{K} \mR_{i}' <T
\Big)
=
\int_{0}^{T} 
\frac{(c'')^{K} t^{K-1}}{(K-1)!} 
 e^{- c'' t}
\ud t
\leqslant 
\frac{c'' T}{K-1} 
\int_{0}^{T} 
\frac{(c'')^{K-1} t^{K-2}}{(K-2)!} 
 e^{- c'' t} \ud t 
\leqslant 
\frac{c'' T}{K-1} 
\,.
\end{aligned}
\end{equation*}
On the other hand, by \(\sigma\)--additivity 
\begin{equation}\label{e_lem_first_site_supp4}
\begin{aligned}
\mathbf{P}_{ \overline{M}}\Big(
\sum_{k =0}^{K} \mT_{k}' > \varepsilon
\Big)
\leqslant 
\mathbf{P}_{ \overline{M}}\Big( 
\bigcup_{k =0}^{K}\big\{ \mT_{k}' > \tfrac{ \varepsilon}{K+1}\big\}\Big)
\leqslant 
(K+1) 
\mathbf{P}_{ \overline{M}} \big( \mT_{0}' > \tfrac{ \varepsilon}{K+1}\big)\,.
\end{aligned}
\end{equation}
Thus, from \eqref{e_lem_first_site_supp3} together with \eqref{eq_estimateK}
and \eqref{e_lem_first_site_supp4}, we get the bound 
\begin{equation*}
\begin{aligned}
\mathbf{P}_{ \overline{M}}
\bigg(
\int_{0}^{T} \mathds{1}_{ \mZ (t)>A} \ud t > \varepsilon
\bigg)
& \leqslant 
\mathbf{P}_{ \overline{M}}\Big(
\sum_{k =0}^{K} \mT_{k}' > \varepsilon
\, \Big\vert\,
\mK_{T}' \leqslant K
\Big)
+
\mathbf{P}_{ \overline{M}}( \mK_{T}' >K ) 
\\
&\leqslant 
(K+1) 
\mathbf{P}_{ \overline{M}} \big( \mT_{0}' > \tfrac{ \varepsilon}{K+1}\big)
+
\frac{c'' T}{K-1} \,.
\end{aligned}
\end{equation*}
In particular, if $ \lim_{\tdlim} \mathbf{P}_{ \overline{M}} \big(
\mT_{0}' > \tfrac{ \varepsilon}{K+1}\big)=0$ then 
the proof is finished, since we can take the limit $K \to \infty$ thereafter.

It is convenient to consider the  discrete--time embedded random walk $ S'=
S'^{(L)}$ of $\mZ$.
We define the first hitting time $\mathfrak{n}'=\mathfrak{n}'(L) $ of the
level $A$
\begin{equation*}
\begin{aligned}
\mathfrak{n}' := \inf \{ n \in \mathbf{N}\,: S'_{n}= A\}\,,
\end{aligned}
\end{equation*}
and note that until $ \mathfrak{n}'$, $S'$ is a simple random walk that jumps up with
probability $ p_{L} '$ and down with probability $ 1- p_{L}' $.
We can then bound for any $ \overline{n} \in \mathbf{N}$ 
\begin{equation}\label{e_lem_first_site_supp7}
\begin{aligned}
\mathbf{P}_{ \overline{M}} \big(
\mT_{0}' > \tfrac{ \varepsilon}{K+1}\big)
& \leqslant 
\sum_{n =1}^{ \overline{n}}
\mathbf{P}_{ \overline{M}} \big(
\mT_{0}' > \tfrac{ \varepsilon}{K+1}\big\vert \mathfrak{n}' = n\big)
\mathbf{P}_{ \overline{M}} ( \mathfrak{n}' = n ) 
+ \mathbf{P}_{ \overline{M}}( \mathfrak{n}' > \overline{n})\,.
\end{aligned}
\end{equation}
 Using that $ \mT '_{0}$ decomposes into a sum of exponential random
variables with rate $ c'_{L} = aL$, yields for every $n \in \mathbf{N}$ 
\begin{equation*}
\begin{aligned}
\mathbf{P}_{ \overline{M}} \big(
\mT_{0}' > \tfrac{ \varepsilon}{K+1}\big\vert \mathfrak{n}' = n\big)
=
\mathbf{P} \Big(
\mathrm{Gamma} (n, a L) > \tfrac{ \varepsilon}{K+1}\Big)
=
\mathbf{P} \Big(
\mathrm{Gamma} (n, a ) > \tfrac{ \varepsilon L}{K+1} \Big)\to 0\,,
\end{aligned}
\end{equation*}
as $\tdlim$, where we used the scaling relationship for the Gamma distribution in the last
equality. 
In particular, taking limits of \eqref{e_lem_first_site_supp7}, we see 
\begin{equation*}
\begin{aligned}
\lim_{\tdlim} 
\mathbf{P}_{ \overline{M}} \big(
\mT_{0}' > \tfrac{ \varepsilon}{K+1}\big)
\leqslant 
\lim_{ \overline{n} \to \infty} 
\lim_{\tdlim} 
\mathbf{P}_{ \overline{M}}\big( \mathfrak{n}' (L) > \overline{n}\big) 
=
\lim_{ \overline{n} \to \infty} 
\mathbf{P}_{ \overline{M}}\big( \mathfrak{n}' ( \infty) > \overline{n}\big) 
=0\,,
\end{aligned}
\end{equation*}
where $  \mathfrak{n}' ( \infty)$ denotes the hitting time for the simple symmetric
random walk, which is almost surely finite, see for example \cite[Chapter~XIV;
display~(2.8)]{fellerbook}.
\end{proof}

\subsection{Construction of the excursion--bounding random walk}\label{sec_coup}

In this section, we prove Corollary~\ref{cor_coupling}, namely that the duration of
excursions of $ \eta_{1}(t)$ away from $ \{0, \ldots,A\}$ are bounded by the
ones of a
homogeneous continuous--time random walk $\mZ (t)$.
The idea of the proof has already been outlined in the previous section,
preceding the statement of Lemma~\ref{lem_coupling}.

\subsubsection{Estimates on the transition rates}

First, we prove that there exist suitable rates $ c'_{L}, c''$ and $
p_{L}' $ of a random walk $\mZ$, satisfying the bounds stated in Lemma~\ref{lem_coupling}.
This will allow us to bound the excursion durations of
$ \eta_{1}(t)$ by
the ones of $\mZ(t)$.

\begin{proof}[Proof of Lemma~\ref{lem_coupling}]
To conclude the statement, it suffices to derive lower and upper bounds for 
\begin{equation}\label{eq_coup_supp1}
\begin{aligned}
u_{2}( \eta_{1}) \sum_{i =2}^{L} u_{1}( \eta_{i})
\quad \text{and} \quad 
 u_{1} ( \eta_{1}) \sum_{j =2}^{L}
u_{2} ( \eta_{j})
\,.
\end{aligned}
\end{equation}
We begin with the case of
 $ \eta \in \Omega_{L,N}$ when $ \eta_{1}> A$ and $ \gamma_{N}( \eta ) >
\delta$.
Then, 
\begin{equation}\label{eq_num_ub2}
\begin{aligned}
u_{2}( \eta_{1}) \sum_{i =2}^{L} u_{1}( \eta_{i})
&\leqslant 
( \eta_{1}^{+} + \zeta_{L} )
\Big(
\sum_{i=2}^{L} 
\mathds{1}_{\eta_{i}>A} ( \eta_{i}^{+} + \zeta_{L})
+
\sum_{i =2}^{L}
\mathds{1}_{0< \eta_{i} \leqslant A}
\frac{q_{\eta_{i}}+ \zeta_{L}}{L} 
\Big)\\
& \leqslant 
( \eta_{1}^{+} + \zeta_{L} )
\Big(
\gamma_{N}( \eta) N -  \eta_{1}^{+} + (\#_{> A} \eta -1) \zeta_{L}
+
\sum_{i =2}^{L}
\mathds{1}_{0< \eta_{i} \leqslant A}
\frac{q_{\eta_{i}}}{L}
+ \zeta_{L} \Big)\\
& \leqslant 
( \eta_{1}^{+} + \zeta_{L} )
\Big(
\big(  \gamma_{N} ( \eta) N - \eta_{1}^{+}\big)(1 +  \zeta_{L})
+
\sum_{i =2}^{L}
\mathds{1}_{0< \eta_{i} \leqslant A}
\frac{q_{\eta_{i}}}{L}
+ \zeta_{L}\Big)
\,.
\end{aligned}
\end{equation}
where $ \eta_{i}^{+}= ( \eta_{i}- A)_{+}$
and we used the crude estimate
$ \#_{>A} \eta-1 \leqslant \gamma_{N}( \eta) N - \eta_{1}^{+}   $  in the last step.
Similarly, we derive the lower bound 
\begin{equation}\label{eq_num_lb1}
\begin{aligned}
u_{2}( \eta_{1}) \sum_{i =2}^{L} u_{1}( \eta_{i})
\geqslant 
( \eta_{1}^{+} -\zeta_{L} )
 \Big(
(  \gamma_{N} ( \eta) N - \eta_{1}^{+}) (1 -  \zeta_{L})
+
\sum_{i =2}^{L}
\mathds{1}_{0< \eta_{i} \leqslant A}
\frac{q_{\eta_{i}}}{L}
- \zeta_{L} 
\Big)\,,
\end{aligned}
\end{equation}
as well as a lower bound for the second term in \eqref{eq_coup_supp1}:
\begin{equation}\label{eq_den2_lb1}
\begin{aligned}
u_{1} ( \eta_{1}) \sum_{j =2}^{L}
u_{2} ( \eta_{j})
& \geqslant 
( \eta_{1}^{+} - \zeta_{L} )
\Big(
\big(  \gamma_{N} ( \eta) N - \eta_{1}^{+}\big)(1 -  \zeta_{L})
+
\sum_{i =2}^{L}
\mathds{1}_{0 \leqslant  \eta_{i} \leqslant A}
\frac{r_{\eta_{i}}}{L}
- \zeta_{L} 
\Big)\\
& \geqslant 
( \eta_{1}^{+} - \zeta_{L} )
\Big(
\big(  \gamma_{N} ( \eta) N - \eta_{1}^{+}\big) (1 -  \zeta_{L})
+
\sum_{i =2}^{L}
\mathds{1}_{0 <  \eta_{i} \leqslant A}
\frac{q_{\eta_{i}}}{L}
- \zeta_{L} 
\Big)\,,
\end{aligned}
\end{equation}
where we used  $ r_{k} \geqslant q_{k}$, $k=1,\ldots, A$, and $ q_{0}=0 $ in
the second
inequality.

Combining \eqref{eq_num_ub2} and \eqref{eq_num_lb1}, \eqref{eq_den2_lb1} yields
\begin{equation}\label{e_p_ub1}
\begin{aligned}
p ( \eta) 
&\leqslant 
\frac{1}{2} \, \frac{ \eta_{1}^{+}+ \zeta_{L}}{ \eta_{1}^{+}- \zeta_{L}} 
\, \cdot\, 
\frac{(  \gamma_{N} ( \eta) N - \eta_{1}^{+})(1 +  \zeta_{L})
+
\sum_{i =2}^{L}
\mathds{1}_{0< \eta_{i} \leqslant A}
\frac{q_{\eta_{i}}}{L}
+ \zeta_{L}}{(  \gamma_{N} ( \eta) N - \eta_{1}^{+}) (1 -  \zeta_{L})
+
\sum_{i =2}^{L}
\mathds{1}_{0 <  \eta_{i} \leqslant A}
\frac{q_{\eta_{i}}}{L}
- \zeta_{L}} \\
& = 
\frac{1}{2} \Big( 1 + \frac{ 2 \zeta_{L}}{ \eta_{1}^{+}- \zeta_{L}} \Big)
\bigg( 1 +
\frac{ 2 \zeta_{L} (  \gamma_{N} ( \eta) N - \eta_{1}^{+} +1)}{( \gamma_{N} (
\eta) N - \eta_{1}^{+}) ( 1- \zeta_{L}) +\sum_{i =2}^{L}
\mathds{1}_{0 <  \eta_{i} \leqslant A}
\frac{q_{\eta_{i}}}{L}
- \zeta_{L}} 
\bigg)\,.
\end{aligned}
\end{equation}
For $ L$ large enough, we have $ \zeta_{L} \leqslant \tfrac{1}{3}$, such
that the first parenthesis on the right--hand side of
\eqref{e_p_ub1} is upper bounded by $1 + 3 \zeta_{L}$, because $
\eta_{1}^{+} \geqslant  1$.
The fraction within the second parenthesis can be estimated
as follows: If $ \eta_{1}^{+}<  {\gamma}_{N} N $, then
\begin{equation*}
\begin{aligned}
& \frac{
2 \zeta_{L}(
\gamma_{N}( \eta ) N - \eta_{ 1}^{+} + 1)
}{(\gamma_{N}( \eta) N- \eta_{1}^{+})  (1 -  \zeta_{L})
+
\sum_{i =2}^{L}
\mathds{1}_{0< \eta_{i} \leqslant A}
\frac{q_{\eta_{i}}}{L}
- \zeta_{L}} 
\leqslant 
2 \zeta_{L}
 \frac{
\gamma_{N}( \eta ) N - \eta_{ 1}^{+} + 1
}{
\tfrac{2}{3}(\gamma_{N}( \eta) N- \eta_{1}^{+}) - \tfrac{1}{3}}
\leqslant 12 \zeta_{L}
\end{aligned}
\end{equation*}
where we  used again  $ \zeta_{L}  \leqslant \tfrac{1}{3}$ and dropped the sum
over $ q_{ k}$'s in the denominator.
On the other hand, if $ \eta_{1}^{+}=  \gamma_{N} N $ (i.e.~the only site with
fast particles is $ \eta_{1} $), then 
\begin{equation*}
\begin{aligned}
  \frac{
2 \zeta_{L}(
\gamma_{N}( \eta ) N - \eta_{ 1}^{+} + 1)
}{(\gamma_{N}( \eta) N- \eta_{1}^{+})  (1 -  \zeta_{L})
+
\sum_{i =2}^{L}
\mathds{1}_{0< \eta_{i} \leqslant A}
\frac{q_{\eta_{i}}}{L}
- \zeta_{L}} 
\leqslant 
 \frac{
2 \zeta_{L} 
}{
\tfrac{L-1}{L}
\min_{k=1, \ldots, A} q_{k}
- \zeta_{L}} 
\leqslant 12 \zeta_{L} \,,
\end{aligned}
\end{equation*}
for $ L$ large enough.
Thus, we have 
\begin{equation*}\label{eq_est_p_eta}
\begin{aligned}
p ( \eta) 
\leqslant 
\frac{1}{ 2} ( 1+ 3 \zeta_{L}) ( 1+ 12 \zeta_{L})
\leqslant \frac{1}{2} ( 1+ 27 \zeta_{L})
\leqslant \frac{1}{2} + 15 \zeta_{L}\,,
\end{aligned}
\end{equation*}
uniformly over all $ \eta$ with $
\eta_{1}> A$,  for large enough $L$.

Similarly, by combining \eqref{eq_num_lb1} and \eqref{eq_den2_lb1}, we get 
\begin{equation}\label{eq_c_lb1}
\begin{aligned}
c ( \eta) 
&\geqslant 
 {\eta}_{1}^{+} \Big(
\big(  \gamma_{N} ( \eta) N - \eta_{1}^{+}\big) (1 -  \zeta_{L})
+
\sum_{i =2}^{L}
\mathds{1}_{0< \eta_{i} \leqslant A}
\frac{q_{\eta_{i}}}{L}
- \zeta_{L} 
\Big)
\geqslant 
\frac{1}{4} ( \underline{q} \vee 1) \delta N 
\,,
\end{aligned}
\end{equation}
for $N,L$ large enough, which can be shown by considering the same two cases as
for $ p ( \eta) $ above. 
Thus, $ c ( \eta) \geqslant
c'_{L}$ (uniformly over all $ \eta$ such that $
\eta_{1}> A$ and $ \gamma_{N}( \eta) > \delta$) where $ c'_{L} := aL $ with $ a = \tfrac{N}{4L}  ( \underline{q} \vee 1)
\delta$.\\

Finally, we show the upper bound for $ c ( \eta) \leqslant  c ''$. Let $
\eta \in \Omega_{L,N}$ such that $ \eta_{1} \leqslant  A$, then for all $N,L$ large enough
\begin{equation*}
\begin{aligned}
c ( \eta) 
&\leqslant 
\frac{ \overline{r} + \zeta_{L}}{L}
\sum_{i =2}^{L} u_{1}( \eta_{i})
+
\frac{ \overline{q} + \zeta_{L}}{L} 
\sum_{j =2}^{L} u_{2}( \eta_{i})\\
& \leqslant 
\frac{2}{L} (\overline{r} + \zeta_{L}) 
\big(
\gamma_{N}( \eta ) N( 1 + \zeta_{L})
+
\overline{r}
+ \zeta_{L}
\big) 
\leqslant 4 \rho ( \overline{r} + 1)=: c''
\end{aligned}
\end{equation*} 
where we used $ \gamma_{N} \leqslant 1 $ and the form of the rates $u_{1}, u_{2}$ from
Assumption~\ref{ass}, in particular, $ q_{k} \leqslant r_{k} \leqslant
\overline{r}$ for all $k=0,
\ldots, A$. This finishes the proof.
\end{proof}

\subsubsection{The coupling argument}

Finally, we conclude Corollary~\ref{cor_coupling} from Lemma~\ref{lem_coupling}
using an explicit coupling of $ \eta_{1}(t)$ and $ \mZ (t)$.
Similar as in \eqref{eq_stoptime_Z}, we define stopping times $ \tau_{0}:=0$
and
\begin{equation*}
\begin{aligned}
\sigma_{k} &:= \inf \{ t> \tau_{k-1} \,:\, \eta_{1}(t) \leqslant  A\}\,,\\
\tau_{k} &:= \inf \{ t> \sigma_{k} \,:\, \eta_{1}(t) =  A+1\}\,.
\end{aligned}
\end{equation*}
 Again this gives rise to the duration $\mT_{k}:=
\sigma_{k +1}- \tau_{k}$ of the $k$--th excursion above level $A$, and duration
$ \mR_{k}:= \sigma_{k}- \tau_{k}$ spent in the slow phase beforehand.
See also Figure~\ref{fig_rw_bound}. 
The goal of this section is to prove that there exists a coupling between $
\eta(t)$ and $ \mZ(t)$, such that $ \mT_{k} \leqslant \mT_{k}' $ and $
\mR_{k} \geqslant \mR_{k}' $, for all $k$, conditioned on the event that $ \inf_{t \in[0,T]} \gamma_{N}( \eta (t))
> \delta$.
Once this has been established, Corollary~\ref{cor_coupling} follows.\\

The idea is to couple the inclusion process $ \eta (t)$ with $ \mZ (t)$ on
each of the ``renewal intervals'', $[ \tau_{k}, \tau_{k +1}) $ and $ [
\tau'_{k}, \tau_{k +1}' )$.
First, we will only focus on the case $k \geqslant 1 $, because the initial renewal
interval ($k=0$) differs in terms of its initial condition. This will be treated as a
special case at the end of the section.

Let $( \xi (m))_{m \in \mathbf{N}}\subset \Omega_{L,N}$ be the
embedded discrete--time evolution $ \eta (t) $ on $[ \tau_{k}, \sigma_{k +1})$,
when $k \geqslant 1$, which satisfies $ \xi_{1}(0)=A+1$.
Because not every performed particle jump yields a change of $ \xi_{1}$, we define the
subsequence $( m_{n})_{n}$ of all ``prejumps'' that satisfy 
\begin{equation*}
\begin{aligned}
\xi_{1}( m_{n}) \neq \xi_{1}( m_{n}-1 )\,.
\end{aligned}
\end{equation*}
This yields a random walk $ S_{n}:= \xi_{1}(m_{n})$ on $ \mathbf{N}$,
representing the evolution of the discrete--time embedded random walk of $
\eta_{1}(t)$, which terminates once it reaches level $ A$ after $ \mathfrak{n} \in
\mathbf{N} \cup \{ \infty\}$ steps.

Now, define the random walk $ S' = S'^{(k)}$ as follows: First, $S_{0} ' =
A+1$.
For $ n \in \mathbf{N}$: 
\begin{enumerate}
\item If $ S $ performs an upwards jump, i.e.~$ S_{n} = S_{n -1}+1 $, then so
does $ S' $ and we set $ S'_{n} = S'_{n -1}+1$.

\item If $ S $ performs a downwards jump, i.e.~$ S_{n} = S_{n -1}-1 $, then 
 \begin{equation*}
\begin{aligned}
S'_{n} = S'_{n -1} 
+
\begin{cases}
+1\,,& \text{ with probability }  p'(\xi (m_{n-1}) )  \\
-1\,, & \text{ otherwise}\,, 
\end{cases}
\end{aligned}
\end{equation*}
independently at every step, where 
\begin{equation*}
\begin{aligned}
p' ( \xi) 
:= 
\frac{ \frac{1}{2} - p ( \xi) + 15 \zeta_{L}}{1- p ( \xi)} \,,
\end{aligned}
\end{equation*}
with $p( \xi) $ defined in \eqref{eq_def_p}.
Note that $ p' \in [0,1]$ as a consequence of Lemma~\ref{lem_coupling}.

\end{enumerate}

By construction, we have 
$ S_{n} \leqslant S '_{n}$ for all $n \leqslant \mathfrak{n}$, see also
Figure~\ref{fig_couple} below.
 Moreover, $S'$ evolves according to a simple random walk with probability
$p'_{L}= \tfrac{1}{2} +15
\zeta_{L}$ of jumping upwards, until hitting $ A$ after a total of
$\mathfrak{n}' \geqslant \mathfrak{n}$ steps.
Therefore, in combination with \eqref{eq_def_ratesZ}, the above coupling yields that $
(S'_{n})_{n =0, \ldots, \mathfrak{n}'}$
is a discrete--time embedded random walk of the continuous time process $ \mZ
(t)$. \\

Next, we construct exponential waiting times to lift the discrete--time random
walk $ S'$ to a suitably coupled continuous--time random walk $\mZ$ with law
\eqref{eq_def_ratesZ}.
Assume we have constructed $ \mZ (t)$ already on $[0, \tau_{k}' )$. 
On the interval $[ \tau_{k}, \sigma_{k +1})$, the process $ \eta_{1}(t)$ jumps according to an
inhomogeneous Poisson process $\Lambda$ with intensity $ c ( \eta (t))$, see
\eqref{eq_c_totalrate}. 
Recall from Lemma~\ref{lem_coupling}, that $ c ( \eta (t)) \geqslant c_{L}' $
for every $ t \in [ \tau_{k}, \sigma_{k+1})$. 
Hence, we can couple the waiting times $\Lambda $ to a
 Poisson process $\Lambda' = \Lambda'^{(k)}$ with constant rate $ c_{L}' $ such that
\begin{equation*}
\begin{aligned}
\Lambda'(t) \leqslant \Lambda (t+ \tau_{k})- \Lambda ( \tau_{k}) \,, \quad \forall t \in [0,
\sigma_{k+1} - \tau_{k})\,.
\end{aligned}
\end{equation*}
Then  define
$ \mZ (t) $ on the interval $[ \tau_{k}',
\sigma_{k+1}']$ as 
\begin{equation*}
\begin{aligned}
\mZ (t) := 
S'_{\Lambda' (t - \tau_{k}')}\,,
\end{aligned}
\end{equation*}
which by construction has law \eqref{eq_def_ratesZ} and satisfies
\begin{equation*}
\begin{aligned}
 \mT_{k}'=
 \inf \{ t >0 \,:\, \Lambda'(t) = \mathfrak{n}'\} \geqslant 
 \inf \{ t > \tau_{k} \,:\, \Lambda(t)- \Lambda ( \tau_{k}) = \mathfrak{n}\}  
=\mT_{k}\,.
\end{aligned}
\end{equation*}
Moreover, we can define  $\sigma_{k +1}' := \tau_{k}' + \mT_{k}$.

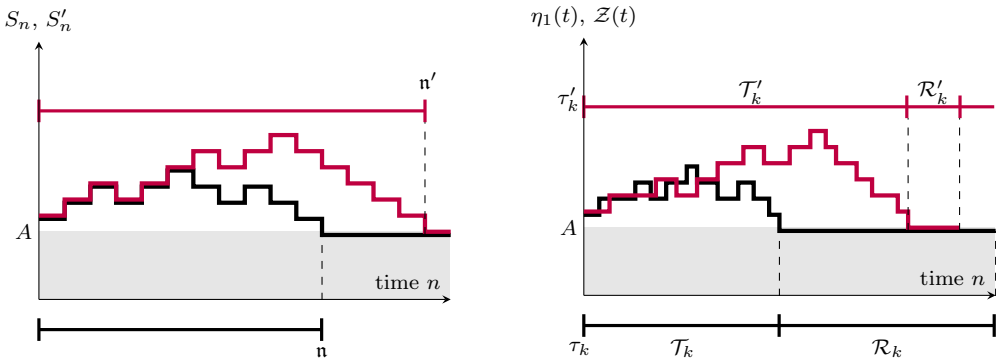
\begin{figure}[H]
\centering
\begin{subfigure}[t]{0.5\textwidth}
        \centering
        \begin{tikzpicture}
\fill[gray!20] (0,0) rectangle (5.4,.9); 
\draw[very thick] (0,-0.4) -- (3.72,-.4);
\draw[very thick] (0,-0.55) -- (0,-0.25); 
\draw[very thick] (3.72,-0.55) -- (3.72,-0.25);

\draw[very thick, purple] (0,2.5) -- (5.08,2.5);
\draw[very thick, purple] (0,2.65) -- (0,2.35); 
\draw[very thick, purple] (5.08,2.65) -- (5.08,2.35);

    \begin{axis}[
        width=7cm, height=5cm,
        axis lines=middle,
        xlabel={\scriptsize time $n$},
        xmin=0, xmax=4,
        ymin=0.3, ymax=1.1,
        xtick=\empty, ytick=\empty,
    ]
    
    \addplot[black, ultra thick] coordinates {
(0.00,0.55)(0.25,0.55)
(0.25,0.6)(0.5,0.6)
(0.5,0.65)(0.75,0.65)
(0.75,0.6)(1,0.6)
(1,0.65)(1.25,0.65)
(1.25,0.7)(1.5,0.7)
(1.5,0.65)(1.75,0.65)
(1.75,0.6)(2,0.6)
(2,0.65)(2.25,0.65)
(2.25,0.60)(2.5,0.60)
(2.5,0.55)(2.75,0.55)
(2.75,0.50) (4,0.50)};
    
    \addplot[purple, ultra thick] coordinates {
(0.00,0.56)(0.25,0.56)
(0.25,0.61)(0.5,0.61)
(0.5,0.66)(0.75,0.66)
(0.75,0.61)(1,0.61)
(1,0.66)(1.25,0.66)
(1.25,0.71)(1.5,0.71)
(1.5,0.76)(1.75,0.76)
(1.75,0.71)(2,0.71)
(2,0.76)(2.25,0.76)
(2.25,0.81)(2.5,0.81)
(2.5,0.76)(2.75,0.76)
(2.75,0.71)(3,0.71)
(3,0.66)(3.25,0.66)
(3.25,0.61)(3.5,0.61)
(3.5,0.56)(3.75,0.56)
(3.75,0.51)(4,0.51)
};

\foreach \x in {2.75} {
        \addplot[dashed, black] coordinates { (\x,0) (\x,.5) };
    }

    \foreach \x in {3.75} {
        \addplot[dashed, black] coordinates { (\x,0.5) (\x,.85) };
    }
    
    \end{axis}

    \node at (-.2, .9) {\scriptsize $A$};

    \node at (3.72, -.7) {\scriptsize $\mathfrak{n}$};
\node at (5.125, 2.9) {\scriptsize $\mathfrak{n}'$};

   \node at (0,3.7) {\scriptsize $ S_{n}$, $ S_{n}'$};

\end{tikzpicture}
    \end{subfigure}%
    ~ 
    \begin{subfigure}[t]{0.5\textwidth}
	\begin{tikzpicture}
\fill[gray!20] (0,0) rectangle (5.4,.9); 
\draw[very thick] (0,-0.4) -- (5.4,-.4);
\draw[very thick] (0,-0.55) -- (0,-0.25); 
\draw[very thick] (5.4,-0.55) -- (5.4,-0.25); 
\draw[very thick] (2.57,-0.55) -- (2.57,-0.25); 

\draw[very thick, purple] (0,2.5) -- (5.4,2.5);
\draw[very thick, purple] (0,2.65) -- (0,2.35); 
\draw[very thick, purple] (4.25,2.65) -- (4.25,2.35); 
\draw[very thick, purple] (4.95,2.65) -- (4.95,2.35);

    \begin{axis}[
        width=7cm, height=5cm,
        axis lines=middle,
        xlabel={\scriptsize time $n$},
        xmin=0, xmax=4,
        ymin=0.3, ymax=1.1,
        xtick=\empty, ytick=\empty,
    ]
    
    \addplot[black, ultra thick] coordinates {
(0.00,0.55)(0.15,0.55)
(0.15,0.6)(0.5,0.6)
(0.5,0.65)(0.65,0.65)
(0.65,0.6)(.8,0.6)
(.8,0.65)(1,0.65)
(1,0.7)(1.1,0.7)
(1.1,0.65)(1.3,0.65)
(1.3,0.6)(1.5,0.6)
(1.5,0.65)(1.65,0.65)
(1.65,0.60)(1.8,0.60)
(1.8,0.55)(1.9,0.55)
(1.9,0.5)(4,0.5)

};
    
    \addplot[purple, ultra thick] coordinates {
(0.00,0.56)(0.25,0.56)
(0.25,0.61)(0.7,0.61)
(0.7,0.66)(0.9,0.66)
(0.9,0.61)(1.1,0.61)
(1.1,0.66)(1.3,0.66)
(1.3,0.71)(1.55,0.71)
(1.55,0.76)(1.75,0.76)
(1.75,0.71)(2,0.71)
(2,0.76)(2.2,0.76)
(2.2,0.81)(2.35,0.81)
(2.35,0.76)(2.45,0.76)
(2.45,0.71)(2.65,0.71)
(2.65,0.66)(2.8,0.66)
(2.8,0.61)(3.05,0.61)
(3.05,0.56)(3.15,0.56)
(3.15,0.51)(3.65,0.51)

         };

\foreach \x in {1.9,4} {
        \addplot[dashed, black] coordinates { (\x,0) (\x,.5) };
    }

    \foreach \x in {3.65,3.15} {
        \addplot[dashed, black] coordinates { (\x,0.5) (\x,.85) };
    }
    
    \end{axis}

    \node at (-.2, .9) {\scriptsize $A$};

    \node at (4, -.7) {\scriptsize $\mR_k$};
    \node at (1.3, -.7) {\scriptsize $\mT_k$};
\node at (4.6, 2.7) {\scriptsize $\mR_k'$};
    \node at (2.2, 2.7) {\scriptsize $\mT_k'$};

    \node at (-.2, 2.6) {\scriptsize $\tau_k'$};
    \node at (-.05, -.7) {\scriptsize $\tau_k$};

   \node at (0,3.7) {\scriptsize $ \eta_{1}(t)$, $ \mZ(t)$};

\end{tikzpicture}        
    \end{subfigure}

\caption{The left figure shows the coupling of the embedded discrete--time
random walks $ S $ (in black) and $ S'$ (in red). After coupling also the
exponential waiting times, we see that excursions of $ \mZ(t)$ (constructed from
$S'$) away from $ A$ take longer than those of $ \eta_{1}(t)$ (in black).}
\label{fig_couple}
\end{figure}

For the intervals $ [ \sigma_{k+1}' , \tau_{k+1}')$, we simply couple a family
of exponential random variables $ ( \mR_{k}')_{k}$ with rate $ c''$ to  $ (
\mR_{k})_{k}$ such that $ \mR_{k} \geqslant \mR'_{k}$. Again, this is possible
because of Lemma~\ref{lem_coupling}. 
We then set $ \tau_{k +1} ' = \sigma_{k +1}' + \mR'_{k+1} $,
together with $ \mZ (t) = A$ on $[ \sigma_{k +1}' , \tau_{k +1}') $.\\

The construction of $\mZ$ on the initial interval $[0,
\sigma_{1}' )$ works
precisely as above, however, the embedded random walks start in $ \mZ(0)=
\eta_{1}( 0)$ instead of $ A+1$, if $ \eta_{1}(0) >A$.
On the other hand, if $ \eta_{1}(0) \leqslant  A $, we set $ \mZ(0)=A$ and we begin
with the
construction of $ \mZ $ on $[0, \tau_{1}')$.

\section{A bound on generator differences}\label{sec_conv_gen}

In this section, we will prove Lemma~\ref{lem_gen_est} by directly comparing
the action of the generators $\mL$ and $\mathfrak{L}_{L,N}$.
The proof is standard but tedious, due to the
two--phase 
interactions in $\mathfrak{L}_{L,N}$ \eqref{e_IP_gen}.
It employs approximation through
Taylor's theorem, specifically, we shall make extensive use of the fact that for
every $f \in \mD$ 
\begin{equation}\label{e_taylor}
\begin{aligned}
f (x',y') - f(x,y)
= 
&\sum_{i =1}^{ \infty} (x_{i}'- x_{i}) \partial_{x_{i}}f(x,y)
+ \sum_{k =0}^{A-1} (y_{k}' - y_{k}) \partial_{y_{k}} f(x,y)\\
&+ \frac{1}{2}  \sum_{i,j =1}^{\infty} (x_{i}'- x_{i})  (x_{j}'- x_{j}) \partial_{x_{i}}
\partial_{x_{j}}f(x,y)\\
&+ \frac{1}{2}  \sum_{k,l =0}^{A-1} (y_{k}' - y_{k}) (y_{l}' - y_{l})  \partial_{y_{k}}
\partial_{y_{l}} f(x,y)\\
& + \frac{1}{2}  \sum_{i =1}^{\infty} \sum_{k =0}^{A-1} (x_{i}'- x_{i}) (y_{k}' - y_{k}) 
 \partial_{x_{i}} \partial_{y_{k}}  f(x,y)+ R^{f}_{x,y}(x',y')\,,
 \end{aligned}
\end{equation}
where $ x $ and $ x'$ only differ by finitely many entries. The error term $
R_{f}$ is uniformly estimated by
\begin{equation}\label{e_tayerr_est}
\begin{aligned}
| R^{f}_{x,y}(x',y')| \leqslant C_{f}\big( \max_{ \substack{ i=1,2,\ldots \\ k =0,
\ldots, A-1}}
\{
|x_{i}- x_{i}' | , | y_{k}- y_{k}'|
\}\big)^{3}\,,
\end{aligned}
\end{equation} 
with $ C_{f}$ being a finite constant only depending on (third order
derivatives of) $ f$.

\begin{proof}[Proof of Lemma~\ref{lem_gen_est}]
Let $ \eta \in \Omega_{L,N}$.
 We split the generator $\mathfrak{L}_{L,N}$ \eqref{e_IP_gen} into four parts, according to the four
possible interactions.\\

\noindent \hbox{\tiny $\blacksquare$} \underline{Slow to slow phase}, i.e.~$ \eta_{i} \leqslant A
$ and $ \eta_{j} \leqslant A$.
First, recall the embedding $\embd $ \eqref{e_def_embd_direct} of particle configurations into $
\overline{\nabla} \times S_{Y}$.
Then by \eqref{e_taylor}
\begin{equation}\label{e_f_taylor}
\begin{aligned}
\big|f\circ \embd  ( \eta^{i,j}) - f\circ \embd  (\eta)\big|
\leqslant 
C_{f}  \Big(\frac{1}{L} + \frac{1}{N} \Big)\,, 
\qquad i,j=1, \ldots, L \,, 
\end{aligned}
\end{equation}
where we absorbed first and second-order derivatives of $f$ into $
C_{f}$.
Therefore, the contribution to the generator of interactions within the slow
phase is estimated by
\begin{equation}\label{e_b2b_supp1}
\begin{aligned}
&\Big|\sum_{i,j=1}^{L} u_{1}( \eta_{i}) u_{2}( \eta_{j})
\mathds{1}_{ \eta_{i} \leqslant  A,\, \eta_{j} \leqslant A}
[f\circ \embd  ( \eta^{i,j}) - f\circ \embd  (\eta)]\Big|\\
& \leqslant  \frac{\overline{u}^{2}}{L^{2}}
\sum_{i,j=1}^{L} 
|f\circ \embd  ( \eta^{i,j}) - f\circ \embd  (\eta)|
 \leqslant  C_{f} \overline{u}^{2} \, \Big(\frac{1}{L} + \frac{1}{N} \Big) = o_{f}(1) \,,
\end{aligned}
\end{equation}
with $ \overline{u} := \sup_{L \in \NN}  \max_{k=0, \ldots, A} \{ 
u_{1,L}(k)\, L,
u_{2,L}(k)\, L  \}< \infty$ by Assumption~\ref{ass}.
Hence, the right--hand side of \eqref{e_b2b_supp1} vanishes which yields that
interactions within the slow phase are not captured on the considered time scale.\\

\noindent \hbox{\tiny $\blacksquare$} \underline{Slow to fast phase}, i.e.~$ \eta_{i} \leqslant
A$ and $ \eta_{j} > A$. 
We start by noting that in this case 
 \eqref{e_taylor} yields 
\begin{equation*}
\begin{aligned}
&f\circ \embd  ( \eta^{i,j}) - f\circ \embd
(\eta)
= 
 \frac{1}{N} \partial_{x_{j}}f\circ \embd  (\eta)
+ \frac{1}{L} 
\partial_{y_{ \eta_{i}-1 }} f\circ \embd  ( \eta)
-\frac{1}{L} 
\partial_{y_{ \eta_{i}}} f\circ \embd  (\eta)
+ o_{f}\big( \tfrac{1}{L} \big)\,.
\end{aligned}
\end{equation*}
Thus, we can rewrite the corresponding part of the generator
$\mathfrak{L}_{L,N}$ in terms of 
\begin{equation}\label{e_b2c_supp3}
\begin{aligned}
&\sum_{i,j=1}^{L}
 u_{1}( \eta_{i}) u_{2}( \eta_{j})
\mathds{1}_{ \eta_{i} \leqslant  A,\, \eta_{j} > A}
[f\circ \embd  (\eta^{i,j}) - f\circ \embd
(\eta)]\\
& = 
\frac{1}{L} 
\sum_{i,j=1}^{L} 
 u_{1}( \eta_{i}) u_{2}( \eta_{j})
\mathds{1}_{ \eta_{i} \leqslant  A,\, \eta_{j} > A}
[ 
\partial_{y_{ \eta_{i}-1 }} f
-\partial_{y_{ \eta_{i}}} f]\circ \embd  (\eta)\\
& \quad 
+
 \frac{1}{N}
\sum_{i,j=1}^{L} 
 u_{1}( \eta_{i}) u_{2}( \eta_{j})
\mathds{1}_{ \eta_{i} \leqslant  A,\, \eta_{j} > A}
 \partial_{x_{j}}f\circ \embd  (\eta)
+o_{f}(1)\\
& = 
\frac{N}{L} 
\Big(\sum_{j=1}^{L} 
\mathds{1}_{\eta_{j} > A}
\frac{u_{2}( \eta_{j})}{N} \Big)
\sum_{k=1}^{A} 
 u_{1}( k)\, L
\frac{\#_{k}\eta}{L} 
[ 
\partial_{y_{ k-1 }} f-\partial_{y_{ k}} f]\circ \embd  (\eta)\\
& \quad 
+ 
\Big( 
\sum_{k =1}^{A } u_{1}(k)  L\,  
\frac{\#_{k} \eta}{L} 
\Big)
\sum_{j=1}^{L} 
\mathds{1}_{\eta_{j} > A}
 \frac{u_{2}( \eta_{j})}{N}
 \partial_{x_{j}}f\circ \embd  (\eta)
+o_{f}(1)
\end{aligned}
\end{equation}
where we performed the change of index $ k = \eta_{i}$ in the last step,
before rearranging terms. 
Note that $\partial_{y_{A}}f =0$ since $f$ does not
depend explicitly on a $ y_{A}$ component.
Finally, using Assumption~\ref{ass}, we have that 
\begin{equation}\label{e_b2c_supp4}
\begin{aligned}
&\sum_{i,j=1}^{L}
 u_{1}( \eta_{i}) u_{2}( \eta_{j})
\mathds{1}_{ \eta_{i} \leqslant  A,\, \eta_{j} > A}
[f\circ \embd  (\eta^{i,j}) - f\circ \embd
(\eta)]\\
& = 
\frac{N}{L} 
\gamma_{N}( \eta) \sum_{k=1}^{A} 
 q_{k}\frac{\#_{k}\eta}{L} 
[ 
\partial_{y_{ k-1 }} f-\partial_{y_{ k}} f]\circ \embd  (\eta)\\
& \quad 
+ 
\Big( 
\sum_{k =1}^{A } q_{k}\,  
\frac{\#_{k} \eta}{L} 
\Big)
\sum_{j=1}^{L} 
 \frac{( \eta_{j}-A )_{+}}{N}
 \partial_{x_{j}}f\circ \embd  (\eta)
+o_{f}(1)\,,
\end{aligned}
\end{equation}
where we  used 
in particular that
\begin{equation}\label{eq_gamma_cf_rates}
\begin{aligned}
\bigg|
 \sum_{i =1}^{L}
\frac{ u_{\iota}( \eta_{i})}{N}\mathds{1}_{ \eta_{i}>A}
- \gamma_{N}( \eta) 
\bigg|
=
\bigg|
 \sum_{i =1}^{L}
\frac{ u_{\iota}( \eta_{i})- ( \eta_{i}-A ) }{N}\mathds{1}_{ \eta_{i}>A}
\bigg|
\leqslant \frac{ \zeta_{L} \, \#_{>A} \eta}{N} \to 0 \,,
\end{aligned}
\end{equation}
as $\tdlim $, for $\iota =1,2$. 
\\

\noindent \hbox{\tiny $\blacksquare$} \underline{Fast to slow phase}, i.e.~$ \eta_{i}>A $ and $
\eta_{j} \leqslant  A$. Analogously to the
previous step,
 \eqref{e_taylor} yields 
\begin{equation*}
\begin{aligned}
f\circ \embd  ( \eta^{i,j}) - f\circ \embd
(\eta)
&= 
- \frac{1}{N} \partial_{x_{i}}f\circ \embd  (\eta)
+ \frac{1}{N} \partial_{x_{j}} f \circ \embd ( \eta) 
\mathds{1}_{\eta_{j}=A}\\
&\quad + \frac{1}{L} 
\partial_{y_{ \eta_{j}+1 }} f\circ \embd  ( \eta)
-\frac{1}{L} 
\partial_{y_{ \eta_{j}}} f\circ \embd  (\eta)
+ o_{f}\big( \tfrac{1}{L} \big)\,.
\end{aligned}
\end{equation*}
Therefore,
\begin{equation}\label{e_c2b_supp4}
\begin{aligned}
&\sum_{i,j=1}^{L}
 u_{1}( \eta_{i}) u_{2}( \eta_{j})
\mathds{1}_{ \eta_{i} >  A,\, \eta_{j} \leqslant  A}
[f\circ \embd  (\eta^{i,j}) - f\circ \embd
(\eta)]\\
&=
\frac{N}{L} 
\gamma_{N}( \eta) 
\sum_{k=0}^{A-1} 
r_{k}
\frac{\#_{k}\eta}{L} 
[ 
\partial_{y_{ k+1 }} f
-\partial_{y_{ k}} f]\circ \embd  (\eta)\\
& \quad 
-
\Big( 
\sum_{k =0}^{A} r_{k}\frac{\#_{k} \eta}{L} 
\Big)
\sum_{i=1}^{L} 
 \frac{( \eta_{i}-A )_{+}}{N}
 \partial_{x_{i}}f\circ \embd  (\eta)\\
&\quad +
\gamma_{N}( \eta) 
\sum_{j=1}^{L} 
 \frac{r_{ A}}{L} 
\mathds{1}_{ \eta_{j}=A}
 \partial_{x_{j}}f\circ \embd  (\eta)
+o_{f}(1)
\,,
\end{aligned}
\end{equation}
where we dropped the term $k=A$ in the first sum, since $
\partial_{y_{A+1}}f = \partial_{y_{A}} f=0$.
Note that the second--to--last term in \eqref{e_c2b_supp4} equals zero, because 
$\partial_{x_{j}} \varphi_{m}\big( \tfrac{ \eta_{j}-A}{N} \big)
\mathds{1}_{\eta_{j}=A}=0$, for all $ m \geqslant 2$. 
\\

\noindent \hbox{\tiny $\blacksquare$} \underline{Fast to fast phase}, i.e.~$ \eta_{i}> A $ and $
\eta_{j}> A $.
The treatment of the this part of the generator is more subtle,
as we have to observe exact cancellations. To this end, 
Taylor's approximation theorem  \eqref{e_taylor}, yields
\begin{equation}\label{e_c2c_supp1}
\begin{aligned}
&f \circ \embd ( \eta^{i,j}) - f\circ \embd  (\eta)
= 
\frac{1}{N} 
\big[\partial_{x_{j}}f
- \partial_{x_{i}}f\big]\circ \embd (\eta)\\
&\quad + \frac{1}{2N^{2}} \big[ 
\partial_{x_{i}}^{2}f
+
\partial_{x_{j}}^{2}f
- 2 \partial_{x_{i}}\partial_{x_{j}}f\big]\circ \embd (\eta)
+ o_{f}\big ( \tfrac{1}{L^{2}}\big)\,.
\end{aligned}
\end{equation}
$\bullet$ We start by showing that contributions of first--order derivatives in
\eqref{e_c2c_supp1} to the generator $\mathfrak{L}_{L,N}$ are controlled by a
uniform constant, using the approximate
symmetry of rates.
 First, we
rearrange the expression to
\begin{equation}\label{e_c2c_sym_supp1}
\begin{aligned}
&\sum_{i,j=1}^{L} 
u_{1} ( \eta_{i}) u_{2}( \eta_{j}) \mathds{1}_{ \eta_{i}>A,\, \eta_{j}> A}
\frac{1}{N} 
\big[\partial_{x_{j}}f
- \partial_{x_{i}}f\big]\circ \embd ( \eta) \\
&=
\sum_{j =1}^{L}
\partial_{x_{j}}f\circ \embd ( \eta) 
\mathds{1}_{ \eta_{j}> A}
\bigg\{
u_{2} ( \eta_{j})  \sum_{i =1}^{L}
\frac{ u_{1}( \eta_{i})}{N}\mathds{1}_{ \eta_{i}>A}
-
u_{1}( \eta_{j} )  \sum_{i =1}^{L}
\frac{  u_{2}( \eta_{i})}{N}
\mathds{1}_{ \eta_{ i}>A}
 \bigg\}\,.
\end{aligned}
\end{equation}
Note that the term inside the parenthesis can be bounded in terms of
\begin{equation*}
\begin{aligned}
&\bigg\vert
u_{2} ( \eta_{j})  \sum_{i =1}^{L}
\frac{ u_{1}( \eta_{i})}{N}\mathds{1}_{ \eta_{i}>A}
-
u_{1}( \eta_{j} ) \sum_{i =1}^{L}
\frac{  u_{2}( \eta_{i})}{N}
\mathds{1}_{ \eta_{ i}>A}
 \bigg\vert\\
&\leqslant
\bigg\vert
u_{2} ( \eta_{j})  -
u_{1}( \eta_{j} )   \bigg\vert
\sum_{i =1}^{L}
\frac{ u_{1}( \eta_{i})}{N}\mathds{1}_{ \eta_{i}>A}
  + 
\bigg\vert
\sum_{i =1}^{L}
\frac{ u_{1}( \eta_{i})}{N}\mathds{1}_{ \eta_{i}>A}
-
 \sum_{i =1}^{L}
\frac{  u_{2}( \eta_{i})}{N}
\mathds{1}_{ \eta_{ i}>A}
 \bigg\vert
u_{1}( \eta_{j} ) 
\\
& \leqslant 
2 \zeta_{L} \Big( \gamma_{N} ( \eta) + \frac{\zeta_{L} L}{N}\Big)
+ 
2 \zeta_{L}\, \#_{>A} \eta \Big(\frac{ \eta_{j}-A}{N} + \frac{\zeta_{L}
}{N}\Big)\,,
\end{aligned}
\end{equation*}
where we applied the triangle inequality first, and then 
used  Assumption~\ref{ass}, \eqref{eq_gamma_cf_rates} and that $ u_{1}(
\eta_{j}) \leqslant \eta_{j} -A+ \zeta_{L}$,
in the last inequality.
Thus, \eqref{e_c2c_sym_supp1} is upper bounded in terms of 
\begin{equation}\label{e_c2c_sym0}
\begin{aligned}
&\bigg\vert\sum_{i,j=1}^{L} 
u_{1} ( \eta_{i}) u_{2}( \eta_{j}) \mathds{1}_{ \eta_{i}>A,\, \eta_{j}> A}
\frac{1}{N} 
\big[\partial_{x_{j}}f
- \partial_{x_{i}}f\big]\circ \embd ( \eta)\bigg\vert \\
& \leqslant 
2\sum_{j =1}^{L}
\big|\partial_{x_{j}}f\circ \embd ( \eta) \big|
\mathds{1}_{ \eta_{j}>A}
\bigg\vert \zeta_{L} \Big( \gamma_{N} ( \eta) + \frac{\zeta_{L} L}{N}\Big)
+ 
 \zeta_{L} \, \#_{>A} \eta\Big(\frac{ \eta_{j}-A}{N} + \frac{\zeta_{L}}{ N}\Big)
\bigg\vert\\
& \leqslant 4
\zeta_{L}
\sum_{j =1}^{L}
\big|\partial_{x_{j}}f\circ \embd ( \eta)\big|
+
2
\zeta_{L}
\, \#_{>A} \eta
\sum_{j =1}^{L}
 \frac{\eta_{j}-A}{N}
\mathds{1}_{ \eta_{j}>A}
\big|\partial_{x_{j}}f\circ \embd ( \eta)\big|\,,
\end{aligned}
\end{equation}
where the last inequality holds uniformly for $L,N$ large enough.
Lastly, we use that $ \sum_{i= 1 }^{\infty} (1+ x_{j}) |\partial_{j}f (x) | \leqslant
C_{f}< \infty$ uniformly, 
for $ f \in \mD$. Then 
\eqref{e_c2c_sym0} can be conveniently expressed in terms of  
\begin{equation}\label{e_c2c_sym}
\begin{aligned}
&\bigg\vert\sum_{i,j=1}^{L} 
u_{1} ( \eta_{i}) u_{2}( \eta_{j}) \mathds{1}_{ \eta_{i}>A,\, \eta_{j}> A}
\frac{1}{N} 
\big[\partial_{x_{j}}f
- \partial_{x_{i}}f\big]\circ \embd ( \eta)\bigg\vert
\lesssim C_{f} \, \zeta_{L} \Big(1 +  \#_{>A} \eta \Big)\,.
\end{aligned}
\end{equation}
$\bullet$ We now proceed to the contribution of the second--order terms in
\eqref{e_c2c_supp1} to $\mathfrak{L}_{L,N}$, which can be conveniently
rewritten into
\begin{equation}\label{eq_supp_c2c_5}
\begin{aligned}
&\frac{1}{2 N^{ 2}} 
\sum_{\substack{i,j=1\\ i \neq j }}^{L} 
u_{1} ( \eta_{i}) u_{2}( \eta_{j}) \mathds{1}_{ \eta_{i}>A,\, \eta_{j}> A}
\big[ 
\partial_{x_{i}}^{2}f
+
\partial_{x_{j}}^{2}f
- 2 \partial_{x_{i}}\partial_{x_{j}}f\big]\circ \embd (\eta)\\
& = 
\sum_{i=1}^{L}
\frac{ ( \eta_{i} -A)_{+}}{N}
\Big(
\gamma_{N}( \eta)
-
\frac{ ( \eta_{i} -A)_{+}}{N}
\Big)
\partial_{x_{i}}^{2}f\circ \embd  (\eta)\\
& \qquad -
\sum_{j=1}^{L}
\sum_{\substack{i=1\\ i \neq j}}^{L}
\frac{ ( \eta_{j} -A)_{+}}{N}
\frac{ ( \eta_{i} -A)_{+}}{N}
\partial_{x_{i}} \partial_{x_{j}} f\circ \embd  (\eta)+ o_{f}(1)\,,
\end{aligned}
\end{equation}
where we also used Assumption~\ref{ass} and \eqref{eq_gamma_cf_rates}.\\

\noindent \hbox{\tiny $\blacksquare$} \underline{Combining the estimates}
 \eqref{e_b2b_supp1}, \eqref{e_b2c_supp4}, \eqref{e_c2b_supp4},
\eqref{e_c2c_sym}, and
\eqref{eq_supp_c2c_5},
 we overall just proved that
\begin{equation}\label{e_supp1_approx_L}
\begin{aligned}
\mathfrak{L}_{L,N} ( f \circ \embd) (\eta)
&=
\sum_{i,j =1}^{L}
\frac{ ( \eta_{i} -A)_{+}}{N}
\Big(
\gamma_{N}( \eta)
\delta_{i,j}
-
\frac{ ( \eta_{j} -A)_{+}}{N}
\Big)
\partial_{x_{i}}\partial_{x_{j}}f\circ \embd  (\eta)
\\
& \qquad 
-
\Big( 
\sum_{k =0}^{A } ( r_{k}-  q_{k})\frac{\#_{k} \eta}{L} 
\Big)
\sum_{i=1}^{L} 
 \frac{( \eta_{i}- A )_{+}}{N}
 \partial_{x_{i}}f\circ \embd  (\eta)\\
&\quad+
\frac{N}{L} 
\gamma_{N} ( \eta) 
\sum_{k=1}^{A} 
q_{k}
\frac{\#_{k}\eta}{L} 
(
\partial_{y_{ k-1 }} f
-\partial_{y_{ k}} f)\circ \embd  (\eta)
\\
&\qquad+
\frac{N}{L} 
\gamma_{N} ( \eta) 
\sum_{k=0}^{A-1} 
r_{k}
\frac{\#_{k}\eta}{L} 
( 
\partial_{y_{ k+1 }} f
-\partial_{y_{ k}} f)\circ \embd  (\eta)\\
& \quad 
+
\mO_{f} \Big(
\zeta_{L}   \#_{>A} \eta \Big)
+o_{f}(1)\,,
\end{aligned}
\end{equation}
uniformly over all $ \eta \in \Omega_{L,N}$.
The expression \eqref{e_supp1_approx_L} is already reminiscent of $ \mL f 
 (\embd (\eta))$. 
All that is left is to replace $\#_{A} \eta$ and  the $ \gamma_{N} ( \eta)$ in \eqref{e_supp1_approx_L} with suitable
expressions in terms of $ \embd ( \eta)$.
To this end, we first note 
\begin{equation}\label{eq_A_occ}
\begin{aligned}
\frac{\#_{A} \eta}{L} = 1 - \sum_{k =0}^{A-1}
\frac{\#_{k} \eta}{ L} - \frac{\#_{>A} \eta}{L} \,,
\end{aligned}
\end{equation}
such that
\begin{equation*}
\begin{aligned}
 \gamma_{N}( \eta ) 
&=
\sum_{i =1}^{L} \frac{ \eta_{i}- A}{N} \mathds{1}_{\eta_{i}>A}
=
1 - \frac{L}{N} \Big(\sum_{k=0}^{A} k \frac{\#_{k} \eta}{L}  
+ A \frac{\#_{> A} \eta}{L}\Big) \\
&=
1 - \frac{L}{N} \Big( A+
\sum_{k =0}^{A-1} (k-A)  \frac{\#_{k} \eta}{L}
\Big)
=  \gamma \big( (
\tfrac{\#_{k}}{L})_{k=0}^{A-1}\big) 
+ o(1)  \,,
\end{aligned}
\end{equation*}
where we used \eqref{eq_A_occ} in the second--to--last step, before rearranging the terms.
Therefore, we can replace all $ \gamma_{N}( \eta ) $ with $ \gamma \big( (
\tfrac{\#_{k}}{L})_{k=0}^{A-1}\big) = \gamma ( \embd ( \eta))$.

Finally, we replace $\#_{A} \eta$ in  \eqref{e_supp1_approx_L}
with $ \overline{\#}_{A} \eta := L - \sum_{k =0}^{A-1} \#_{k} \eta$.
According to \eqref{eq_A_occ}, the corresponding error can be absorbed
into an error term $\mO_{f}\big( \tfrac{\#_{>A} \eta }{L}\big)$.
Note that $\mO_{f}\big( \zeta_{L}\, \#_{>A} \eta )$ in
\eqref{e_supp1_approx_L} can be absorbed into this
new error term, because $ \zeta_{L} = \mO ( L^{-1})$.
For convenience, we will write $ \overline{\#}_{k} \eta = \#_{k} \eta  $, $ k=0,
\ldots, A-1$.

Overall, we  recover
\begin{equation}\label{e_supp2_approx_L}
\begin{aligned}
\mathfrak{L}_{L,N} ( f \circ \embd) (\eta)
&=
\sum_{i,j =1}^{L}
\frac{ ( \eta_{i} -A)_{+}}{N}
\Big(
\gamma (\embd( \eta))
\delta_{i,j}
-
\frac{ ( \eta_{j} -A)_{+}}{N}
\Big)
\partial_{x_{i}}\partial_{x_{j}}f\circ \embd  (\eta)
\\
& \qquad 
-
\theta ( \embd( \eta) )
\sum_{i=1}^{L} 
 \frac{( \eta_{i}- A )_{+}}{N}
 \partial_{x_{i}}f\circ \embd  (\eta)\\
&\quad+
\frac{N}{L} 
\gamma (\embd( \eta))
\sum_{k=1}^{A} 
q_{k}
\frac{ \overline{\#}_{k}\eta}{L} 
(
\partial_{y_{ k-1 }} f
-\partial_{y_{ k}} f)\circ \embd  (\eta)
\\
&\qquad+
\frac{N}{L} 
\gamma (\embd( \eta))
\sum_{k=0}^{A-1} 
r_{k}
\frac{ \overline{\#}_{k}\eta}{L} 
( 
\partial_{y_{ k+1 }} f
-\partial_{y_{ k}} f)\circ \embd  (\eta)\\
& \quad 
+
\mO_{f} \Big(
\frac{ \#_{>A} \eta}{L} \Big)
+o_{f}(1)\\
&=
\mL f ( \embd ( \eta))
+ 
\mO_{f} \Big(
\frac{ \#_{>A} \eta}{L} \Big)
+o_{f}(1)
\,.
\end{aligned}
\end{equation}
This concludes the proof.
\end{proof}

\appendix
\section{Multi-loci Wright--Fisher diffusions}\label{sec_app_WF}

In this appendix, we consider the general setting of multi-loci Wright--Fisher
diffusions, which are characterised by infinitesimal generators of the form 
\begin{equation}\label{e_gen_WF}
\begin{aligned}
\mathtt{L} g ( \mathbf{x})
= 
\sum_{k= 1}^{n}
\left( 
\sum_{i,j =1}^{d_{k}} a^{(k)}_{i,j}( \mathbf{x})
\partial_{ \mathbf{x}^{(k)}_{i}  }\partial_{ \mathbf{x}^{(k)}_{j} }g( \mathbf{x})
+ \sum_{i =1}^{d_{k}} b_{i}^{(k)}( \mathbf{x}) \partial_{  \mathbf{x}^{(k)}_{i}}g( \mathbf{x})\,,
 \right)\,,
\end{aligned}
\end{equation} 
where  $ g \in C^{2}( K_{\bf d}) $, with
\begin{equation}\label{e_K_prod}
\begin{aligned}
\mathbf{x} = \big( \mathbf{x}^{(1)} , \ldots, \mathbf{x}^{n}\big) \in
K_{d_{1}}\times \cdots \times K_{d_{n}}=:
 K_{ \bf
d} \,, 
\end{aligned}
\end{equation}
and $K_{d_{k}}$ defined in terms of
\begin{equation*}
\begin{aligned}
K_{d}:= \Big\{x \in [0, 1]^d\,:\, \sum_{i=1}^{d} x_{k} \leqslant 1 \Big\}\,.
\end{aligned}
\end{equation*}
We assume the diffusion part of the operator to be of the typical Wright--Fisher
form:
\begin{equation*}
\begin{aligned}
a^{(k)}_{i,j}( \mathbf{x}) = \alpha_{k}\,   \mathbf{x}_{i}^{(k)} ( \delta_{i,j}
-  \mathbf{x}_{j}^{(k)})\,, \quad \text{with } \ \alpha_{k} \in [0, \infty)\,.
\end{aligned}
\end{equation*}
Moreover, to guarantee that the diffusion does not exit the multi-simplex $
K_{\bf d}$, we require 
that
\begin{equation}\label{eq_drfit_coeff1}
\begin{aligned}
b^{(k)}_{i}( \mathbf{x} ) \geqslant 0 \,, \text{ for all  }
\mathbf{x} \text{ such that }
\mathbf{x}_{i}^{(k)}=0\,, 
\end{aligned}
\end{equation}
\begin{equation}\label{eq_drfit_coeff2}
\begin{aligned}
\text{and}
\quad
\sum_{i =1 }^{d_{k}} b^{(k)}_{i}(\mathbf{x} ) \leqslant  0\,,  \text{ for all }
\mathbf{x}  \text{ such that }
\, \sum_{i = 1}^{d_{k}} \mathbf{x}_{i}^{(k)} = 1\,.
\end{aligned}
\end{equation}
The closure of the operator $\mathtt{L}$ is the infinitesimal generator of a
Feller process supported on 
$C([0, \infty), K_{\bf d})$, provided $ b^{(k)}_{i} \in C^{ 4}( K_{\bf
d})$. 
To see this, one can follow for example the proof of \cite{Ethier_1976} in the
multi-loci setting, see also \cite{Ethier_1979} for a two-loci
setting.

In the following, we will only require that the martingale problem for
$\mathtt{L}$ admits solutions in $C([0, \infty), K_{\bf d})$, which holds
under the weaker assumption that the coefficients $b^{(k)}_{i}$'s are Lipschitz continuous,
see \cite[Proposition~1]{Ethier_1976}. Uniqueness of solutions to the
martingale problem should also extend to this case, see \cite[p.~406]{Ethier_1979}.
\\

To approximate solutions of the martingale problem for $\mL$ in
Section~\ref{sec_wp_pd}, we consider the two--loci case
with $\mathtt{L}^{(M)}$ as in \eqref{e_gen_WF} and $ d_{1}= M-1$,
$d_{2} = A$.
For convenience, we decompose $
\mathtt{L}^{(M)}$ into $\mathtt{L}^{(M)} =
\mathtt{A}^{(M)}_{ 1,\beta} + \mG$, with
 $\mG$ as in \eqref{eq_gen_G} and
\begin{equation}\label{eq_def_AM_op}
\begin{aligned}
\mathtt{A}^{(M)}_{1, \beta} g ( z,y) := 
\sum_{i,j =1}^{M-1} 
z_{i} ( \delta_{i,j} - z_{j} )
\partial_{z_{i} , z_{j} }^{2}g(z,y)
+ 
\overline{\beta}(y)
\sum_{i =1}^{M-1} \big( \tfrac{1 }{M-1} (1- z_{i} ) -  z_{i} \big)
\partial_{ z_{i}}g(z,y)
\,,
\end{aligned}
\end{equation} 
where
 $ \overline{\beta} \in C (S_{Y}, [0, \infty))$ is a
(Lipschitz) continuous extension of
\begin{equation}\label{e_def_beta}
\begin{aligned}
\beta( y) =
\sum_{k=0}^{A-1}
b_{k}(y) 
\frac{\partial_{y_{k}} \gamma(y)}{ \gamma(y)}
+
\theta (y) \geqslant 0 
 \,, \quad y \in \overline{\supp \gamma}\,,
\end{aligned}
\end{equation}
see Assumption~\ref{ass_def_L}.

The drift coefficients of this two--loci diffusion model are Lipschitz continuous, by
Assumption~\ref{ass_def_L}.(b)~and~\ref{ass_def_L}.(d).
Thus, the martingale problem for $\mathtt{L}^{(M)}$ admits solutions $(
Z^{(M)}(t), Y(t))_{t \geqslant 0}$ with sample paths in $ C (
[0, \infty), K_{M-1}\times S_{Y}) $.
Note that the boundary conditions \eqref{eq_drfit_coeff1} and
\eqref{eq_drfit_coeff2} remain satisfied for $
\overline{\beta} \geqslant 0 $, due to the specific form of the drift coefficients.

\subsection*{Assumption~\ref{ass_def_L} is satisfied for the PD--limit of the inclusion process}

For the inclusion process with non--trivial bulk, we don't rely on the
approximation of the limiting dynamics by Wright--Fisher diffusions, since the
inclusion process itself provides a construction of solutions to the martingale
problem for $ \mL $.

We still want to verify that the setting of Theorem~\ref{thm_main} provides an
example for which Assumption~\ref{ass_def_L} is satisfied.  
To this end, we check each assumption individually. 

\begin{enumerate}
\item[(a)]
The coefficients of the generator for the control process are of the form 
\begin{equation*}
\begin{aligned}
b_{k}(y)
=
\rho\, \gamma(y)
\big(
q_{k +1} y_{k +1} + r_{k -1} y_{k -1}
- (q_{k}+ r_{k}) y_{k}
\big)\,,
\end{aligned}
\end{equation*}
which clearly vanish whenever $ y \in S_{Y}^{*}$.

\item[(b)]
 The boundary conditions \eqref{e_cond_b1} and \eqref{e_cond_b2} hold,
since 
\begin{equation*}
\begin{aligned}
b_{k}(y)
=
\rho\, \gamma(y)
\big(
q_{k +1} y_{k +1} + r_{k -1} y_{k -1}
\big)
\geqslant 0\,, \quad \text{whenever } y_{k}=0\,,
\end{aligned}
\end{equation*}
and 
\begin{equation*}
\begin{aligned}
\sum_{k =0}^{ A-1} b_{k}(y) 
= 
\rho \, \gamma(y) 
\Big( q_{A} y_{A}
- r_{A-1} y_{A-1}\Big)
\leqslant 0\,,\quad \text{whenever } \sum_{k =0}^{A-1} y_{k}= 1 \,,
\end{aligned}
\end{equation*}
because in this case $y_{A}=1- \sum_{k=0}^{A-1} y_{k}= 0$.
Moreover, the $ b_{k}$'s are Lipschitz continuous.

\item[(c)]
 Lipschitz continuity of the control $ \theta(y) =
\sum_{k =0}^{A } (r_{k}- q_{k}) y_{k} $ is immediate, similar for $ \gamma$
because it is the positive part of the function
\begin{equation*}
\begin{aligned}
\gamma( y) 
= 1- \frac{1}{ \rho} \sum_{k=0}^{A} k y_{k}
=
1+
\frac{1}{\rho} 
\bigg(
\sum_{k =0}^{A-1} (A-k) y_{k} 
-A
\bigg)\,.
\end{aligned}
\end{equation*}

\item[(d)]
 Lastly, $ \beta$ on $ \supp \gamma$ takes the form 
\begin{equation*}
\begin{aligned}
\beta(y) 
&= \theta (y) 
+
\sum_{k =0}^{A-1} 
(A-k)
\big(
q_{k +1} y_{k +1} + r_{k -1} y_{k -1}
- (q_{k}+ r_{k}) y_{k}
\big)\\
& = 
\sum_{k =0}^{A } (r_{k}- q_{k}) y_{k} 
+
\left\{
\sum_{k =0}^{A}
q_{k} y_{k} - A\, q_{0} y_{0}
-
\sum_{k =0}^{A-1}
r_{k} y_{k}
\right\}
= r_{A}y_{A} \geqslant 0 \,,
\end{aligned}
\end{equation*}
where we used that $ \partial_{ y_{k}} \gamma (y)= \rho^{-1} (A-k)$ and the
explicit form of $ \theta( \cdot)$, before rearranging and cancelling the
majority of summands. Recall that $ q_{0} = 0$.

\end{enumerate}

\bibliography{cites}
\bibliographystyle{alpha}

\end{document}